\documentclass[11pt]{article}
\usepackage{Preamble}
\usepackage{xcolor}

\begin{document}

\author{ 
Garrett Nelson}

\title{Rational parking functions and $(m, n)$-invariant sets}

\maketitle

\abstract{

    An $(m, n)$-parking function can be characterized as function $f:[n] \to [m]$ such that the partition obtained by reordering the values of $f$ fits inside a right triangle with legs of length $m$ and $n$. Recent work by McCammond, Thomas, and Williams define an action of words in $[m]^n$ on $\nR^n$. They show that rational parking functions are exactly the words that admit fixed points under that action. An $(m, n)$-invariant set is a set $\Delta \subset \nZ$ such that $\Delta + m \subset \Delta$ and $\Delta + n \subset \Delta$. In this work we define an action of words in $[m]^n $ on $(m, n)$-invariant sets by removing the $j$th $m$-generator from $\Delta$. We show this action also characterizes $(m, n)$-parking functions. Further we show that each $(m, n)$-invariant set is fixed by a unique monotone parking function. By relating the actions on $\nR^m$ and on $(m, n)$-invariant sets we prove that the set of all the points in $\nR^m$ that can be fixed by a parking function is a union of points fixed by monotone parking functions. In the case when $\gcd(m, n) =1$ we characterize the set of periodic points of the action defined on $\nR^m$ and show that the algorithm reversing the Pak-Stanley map proposed by Gorsky, Mazin, and Vazirani converges in a finite amount of steps.

}

\section{Introduction }

Parking functions were first defined in \cite{Konheim_G_Weiss_Occupancy_Discipline}, and the generalization to rational parking functions was studied in \cite{gorsky_mazin_vazirani_2014,ARMSTRONG2015159}. In \cite{mccammond_thomas_williams_2019} Jon McCammond, Hugh Tomas, and Nathan Williams give a new characterization of $(m, n)$-parking functions. They define an action of words in $[m] = \{ 0, 1, \dots, m-1\}$ on the Weyl Chamber 
\[
V^m = \{ \x = (x_0, x_1, \dots , x_{m-1}) : \sum_i x_i = 0 \textnormal{ and } x_0 \leq x_1 \leq \dots \leq x_{m-1}\},
\]
where each letter $i$ acts on $\x \in V^m$ by adding $m$ to $x_i$, subtracting $1$ from each coordinate, then resorting the coordinates. Then a word $w \in [m]^n$ acts on $\x \in V^m$ from right to left. The authors of \cite{gorsky_mazin_vazirani_2017} prove that $\w$ is an $(m, n)$-parking function if and only if $\w$ has a fixed point in $V^m$. They further show that the set of fixed points is a convex set of affine dimension $\gcd(m, n) -1$.

In \cite{gorsky_mazin_vazirani_2017} Eugene Gorsky, Mikhail Mazin, and Monica Vazirani study $(m, n)$-Dyck paths when $m$ and $n$ are not relatively prime and $(m, n)$-invariant sets of $\nZ$. A set $\Delta \subset \nZ$ is $(m, n)$ invariant if $\Delta + m \subset \Delta$ and $\Delta + n \subset \Delta$. When $m$ and $n$ are coprime, there is a bijection from $(m, n)$-Dyck paths to 0-normalized $(m, n)$-invariant sets. When $m$ and  $n$ are not coprime the authors define an equivalence relation on $(m, n)$-invariant sets such that the equivalence classes are in bijection with $(m, n)$-Dyck paths.

In this paper, we connect the ideas from \cite{mccammond_thomas_williams_2019} and \cite{gorsky_mazin_vazirani_2017}. We define an action of words in $[m]^n$ on  bounded and co-bounded $ m $-invariant sets. An $(m, n)$-invariant set $\Delta$ is bounded and co-bounded if there are an $N, M \in \nN$ such that  $\nZ_{\leq -N} \cap \Delta = \emptyset$ and $\nZ_{\geq M} \subset \Delta $. The $m$-generators of $\Delta$ are given be $(\Delta + m) \setminus \Delta = \{ a_1 < \dots < a_m \}$. The letter $i \in [m]$ will act on $\Delta$ by removing the $a_i$ from $\Delta$. A word in $[m]^n$ acts from right to left. We prove:

\begin{theorem}
    Let $\w$ be a word in $[m]^n$ and $\Delta$ an $m-$invariant set such that $\w \cdot \Delta = \Delta + n$ then $\w$ is an $(m, n)$-parking function and $\Delta$ is $(m, n)$ invariant. Furthermore for any $(m, n)-$parking function there exists an $(m, n)$-invariant set such that $\w \cdot \Delta = \Delta + n$.  
\end{theorem}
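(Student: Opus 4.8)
I would prove the two halves of the statement separately; the first is short and the second is where the work lies. Suppose first that $\w\cdot\Delta=\Delta+n$. Since each letter of $\w$ deletes exactly one integer, $\w\cdot\Delta$ is obtained from $\Delta$ by removing $n$ elements, so $\w\cdot\Delta\subseteq\Delta$ and hence $\Delta+n\subseteq\Delta$; together with $\Delta+m\subseteq\Delta$ this says precisely that $\Delta$ is $(m,n)$-invariant. To see that $\w$ is a parking function I would compare the action on $m$-invariant sets with the McCammond--Thomas--Williams action on $V^m$ through the map
\[
\pi(\Gamma)=(a_1-\mu,\ a_2-\mu,\ \ldots,\ a_m-\mu),\qquad \mu=\frac{1}{m}(a_1+\cdots+a_m),
\]
defined on a bounded, co-bounded $m$-invariant set $\Gamma$ with $m$-generators $a_1<\cdots<a_m$; this lands in $V^m$ since the coordinates increase and sum to zero. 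The crux is a one-line check per letter: deleting the $i$-th smallest $m$-generator $a_i$ replaces $a_i$ by $a_i+m$ and raises $\mu$ by $1$, and the new sorted tuple minus the new mean is exactly the result of adding $m$ to the $i$-th coordinate of $\pi(\Gamma)$, subtracting $1$ from each coordinate, and resorting --- the action of the letter $i$ on $V^m$. Hence $\pi$ intertwines the actions of all words, and since moreover $\pi(\Gamma+t)=\pi(\Gamma)$ for every $t\in\nZ$, the hypothesis gives $\w\cdot\pi(\Delta)=\pi(\w\cdot\Delta)=\pi(\Delta+n)=\pi(\Delta)$: the point $\pi(\Delta)$ is fixed by $\w$, so $\w$ is an $(m,n)$-parking function by the characterization of parking functions via fixed points of the $V^m$-action.

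For the existence statement I would run this backwards, using two further elementary facts about $\pi$: (a) if $\pi(\Gamma)=\pi(\Gamma')$ then $\Gamma'$ is a translate of $\Gamma$, because $\pi$ records the sorted tuple of $m$-generators up to a common shift and that tuple determines the set; and (b) if in addition $\Gamma'\subseteq\Gamma$ with $\Gamma\setminus\Gamma'$ finite, then $\Gamma'=\Gamma+|\Gamma\setminus\Gamma'|$. Thus it suffices, given an $(m,n)$-parking function $\w$, to exhibit a bounded, co-bounded $m$-invariant set $\Gamma$ with $\pi(\Gamma)$ fixed by $\w$: for then $\pi(\w\cdot\Gamma)=\w\cdot\pi(\Gamma)=\pi(\Gamma)$, so by (a) $\w\cdot\Gamma$ is a translate of $\Gamma$, and since it lies inside $\Gamma$ and differs from it by exactly the $n$ deleted elements, (b) forces $\w\cdot\Gamma=\Gamma+n$. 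The natural candidate is the $m$-invariant set corresponding, via the bijection of \cite{gorsky_mazin_vazirani_2017}, to the $(m,n)$-Dyck path that is the Pak--Stanley image of $\w$. When $\gcd(m,n)=1$ one can instead produce $\Gamma$ by iteration: start from $\Gamma_0=\nZ_{\ge 0}$, set $\Gamma_{k+1}=\w\cdot\Gamma_k$, and take $\Gamma=\Gamma_k$ for the first $k$ with $\pi(\Gamma_{k+1})=\pi(\Gamma_k)$.

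The ``if'' direction and the intertwining lemma are routine once $\pi$ is written down; the real obstacle is the existence step I just reduced to, namely that the fixed-point set of a parking function meets the image of $\pi$ --- equivalently, that $\pi$ of the invariant set attached to the Dyck path of $\w$ is a fixed point of $\w$, and (for $\gcd(m,n)=1$) that the iteration above terminates. I expect to settle this by directly matching up the three combinatorial pictures in play: the Pak--Stanley map, the Dyck-path/invariant-set correspondence of \cite{gorsky_mazin_vazirani_2017}, and the fixed-point description on $V^m$; the coprime iteration statement is exactly the finite termination of the Pak--Stanley-reversing algorithm, which I will prove separately. One routine point to dispose of beforehand is that deleting an $m$-generator keeps a set bounded, co-bounded and $m$-invariant, so that the action and all the iterates are well defined.
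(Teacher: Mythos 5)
The first half of your argument is exactly the paper's: your map $\pi$ is the paper's $\phi$ (the sorted, mean-centered tuple of $m$-generators), the one-letter intertwining check is the paper's lemma that $\phi$ commutes with the two actions and lands on centroids, and the deductions that $\w$ has a fixed point in $V^m$ (hence is a parking function) and that $\w\cdot\Delta\subset\Delta$ forces $\Delta+n\subset\Delta$ are the paper's Theorem and Lemma in Section 3 verbatim. Your reduction of the existence statement via facts (a) and (b) is also sound: $\phi$ determines a bounded, co-bounded $m$-invariant set up to integer translation, and a translate of $\Delta$ contained in $\Delta$ with exactly $n$ elements removed must equal $\Delta+n$, so it suffices to exhibit a $\Delta$ with $\phi(\Delta)$ fixed by $\w$.

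That last step, however, is the entire content of the ``furthermore,'' and you do not prove it --- you say so yourself. Neither candidate is verified: the termination of the iteration $\Gamma_{k+1}=\w\cdot\Gamma_k$ is precisely the convergence statement the paper only obtains in Section 6 after the periodic-point analysis (which itself presupposes knowing the fixed point), so invoking it here is circular as a foundation; and ``the invariant set attached to the Dyck path of $\w$'' is only shown in the paper to be shifted by the \emph{monotone} rearrangement of $\w$, not by $\w$ itself. What actually closes the coprime case cheaply is that the unique fixed point of $\w$ is the centroid of an alcove (McCammond--Thomas--Williams), and centroids are exactly the image of $\phi$: their coordinates, after the normalization in the definition of centroid, are $m$ integers with distinct residues modulo $m$, hence are the $m$-generators of a unique $\Delta$; you never make this observation. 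For $d=\gcd(m,n)>1$ the gap is genuine and substantial: the fixed-point set of $\w$ is a $(d-1)$-dimensional convex set, and one must show it contains a point of the form $\phi(\Delta)$ for an honest integer $(m,n)$-invariant set --- this is what the paper's associated monotone parking function (Section 4) and the $\Sigma_\Theta$-region analysis (Section 5) are built to supply, and ``matching up the three combinatorial pictures'' is a plan, not an argument. As written, your proposal establishes the first sentence of the theorem but not the existence claim.
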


We connect this action to the bijection $\cG$ defined in \cite{gorsky_mazin_vazirani_2017} from the equivalence classes of $(m, n)$-invariant sets to the set of $(m, n)$-Dyck paths.

\begin{theorem}

    The monotone parking function given by the column heights of $\cG(\Delta)$ is the unique monotone parking function such that $\w  \cdot \Delta = \Delta + n$.
\end{theorem}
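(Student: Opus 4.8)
The plan is to reduce to the case $\min\Delta=0$, build the required monotone parking function by a greedy deletion procedure, and then identify that word with the column heights of $\cG(\Delta)$.

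First I would record two preliminary facts. \emph{Translation-equivariance}: if $\Delta'=\Delta+t$ then the $i$th $m$-generator of $\Delta'$ is $t$ plus the $i$th $m$-generator of $\Delta$, so $\w\cdot\Delta'=(\w\cdot\Delta)+t$; hence the relation $\w\cdot\Delta=\Delta+n$ depends only on the translation class of $\Delta$, and we may take $\Delta$ to be $0$-normalized, which is also the representative used by $\cG$. \emph{Rigidity}: if $\w$ is any length-$n$ word with $\w\cdot\Delta=\Delta+n$, then the $n$ elements deleted by the successive letters of $\w$ are pairwise distinct and their union is exactly $\Delta\setminus(\Delta+n)$, the set of $n$-generators of $\Delta$ — the action only shrinks sets and removes one element per letter, and the final set $\Delta+n$ is $\Delta$ with its $n$-generators removed. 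Consequently every letter of a fixing word must delete an $n$-generator, i.e.\ it must never delete an element $x$ of the current set with $x-n\in\Delta$.

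Next I would construct the word greedily: starting from $\Delta_0:=\Delta$, at each stage apply the smallest-indexed letter that is at least as large as the previously used letter and deletes from the current set an element $x$ with $x-n\notin\Delta$, stopping once no $n$-generator of $\Delta$ remains. I expect to prove that this process runs for exactly $n$ steps, deleting all $n$-generators of $\Delta$ — so by rigidity the resulting word $\w^{\mathrm{gr}}$ satisfies $\w^{\mathrm{gr}}\cdot\Delta=\Delta+n$ — that it is monotone by construction, and that it is an $(m,n)$-parking function, because the thresholds at which letter $i$ first becomes illegal translate precisely into the Dyck inequalities on the partial sums of the letter multiplicities. The key lemma is a bookkeeping statement describing, after the first $t$ deletions, which elements of the truncated set $\Delta_t$ are still legal (lie among the $n$-generators of $\Delta$) — equivalently, how the $m$-generators of the chain $\Delta=\Delta_0\supset\Delta_1\supset\cdots$ interleave with the fixed set of $n$-generators of $\Delta$.

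Uniqueness follows from rigidity: if $\w$ is monotone with $\w\cdot\Delta=\Delta+n$ and $\w\neq\w^{\mathrm{gr}}$, look at the first step where they differ; up to that point the current set is the same, the letter of $\w$ there is legal (by rigidity) and weakly larger than the previous one, hence weakly larger than the greedy choice, and one checks via the bookkeeping lemma that skipping the greedy choice strands an $n$-generator that can only be removed by a strictly smaller letter — contradicting monotonicity. So $\w=\w^{\mathrm{gr}}$. It then remains to identify $\w^{\mathrm{gr}}$ with the monotone parking function given by the column heights of $\cG(\Delta)$, which I would do by matching the greedy deletion process against the explicit construction of $\cG$ in \cite{gorsky_mazin_vazirani_2017} (the boundary path of the Young diagram $\nN\setminus\Delta$ in the $m\times n$ rectangle), so that the multiplicity of letter $i$ in $\w^{\mathrm{gr}}$ equals the $i$th column height. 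I expect this last identification — reconciling the generator-shuffling picture with GMV's encoding of $\cG$ — to be the main obstacle; once the dictionary is in place, the case $\gcd(m,n)>1$ needs no change, since $\cG$ and hence $\w^{\mathrm{gr}}$ depend only on the equivalence class of $\Delta$, while the identity $\w^{\mathrm{gr}}\cdot\Delta=\Delta+n$ and its uniqueness are proved one set at a time.
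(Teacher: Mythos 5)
Your overall architecture matches the paper's: reduce to a normalized representative, observe that any fixing word must delete exactly the $n$-generators of $\Delta$ (the paper's Lemma \ref{lemma: Also n gen of Delta}), build the monotone word by removing them greedily, prove uniqueness by examining the first deviation, and finally match the word against $\cG(\Delta)$. However, the two steps that carry essentially all of the mathematical content are asserted rather than proven, so as it stands this is a plan, not a proof.

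First, the existence step. You say you ``expect to prove that this process runs for exactly $n$ steps'' and describe the needed fact as a ``bookkeeping statement,'' but this is precisely the crux: one must show that at stage $i$ the smallest not-yet-removed $n$-generator $a_i$ of $\Delta$ is actually an $m$-generator of the current set $\Delta^{(i)}=\Delta\setminus\{a_0,\dots,a_{i-1}\}$, so that some letter can delete it, and that its index among the current $m$-generators is weakly larger than the previous letter. The paper proves the first claim by a short contradiction: if $a_i-m\in\Delta^{(i)}$, then $a_i-m$ lies over some $n$-generator $\tilde a=a_j+n$ of $\Delta^{(i)}$ with $j<i$, forcing $a_i-m-n=a_j+kn\in\Delta$ while $a_i-n\notin\Delta$ and hence $a_i-m-n\notin\Delta$ by $m$-invariance --- a contradiction. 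Without this argument (or an equivalent one) your greedy procedure is not known to terminate after $n$ steps, your claim that the word is a parking function has no basis (the paper gets it for free from Theorem 3.3 once $\w\cdot\Delta=\Delta+n$ is established, whereas your proposed route through ``thresholds at which letter $i$ first becomes illegal'' is additional unproven work), and your uniqueness argument, which explicitly invokes the same ``bookkeeping lemma,'' is also left hanging. A related unaddressed point: your greedy selects the smallest \emph{legal index} weakly above the previous letter, while the paper's construction removes the smallest remaining $n$-generator \emph{by value}; these coincide, but that requires the observation that $a_i$ remains the $k$th smallest $m$-generator until it is removed, which you do not establish.

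Second, the identification with $\cG(\Delta)$ is exactly the statement being proved, and you defer it as ``the main obstacle.'' The paper's Lemma \ref{ lemma: G to mono } does this by tracking the skeleton of $\Delta+n$ through the action: it shows the skeleton reads as $a_0$, then $p_1-p_0$ surviving $m$-generators, then $a_1$, then $p_2-p_1$ surviving $m$-generators, then $a_2$, and so on, so the gaps between consecutive $n$-cogenerators in the skeleton are the increments of the monotone word, which is the column-height encoding of $\cG(\Delta)$. Some version of this interleaving argument must be supplied; without it the theorem as stated is not proved even granting existence and uniqueness of the monotone fixing word.
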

When $\gcd(m, n) =1$ we characterize the points in $V^m$ that are periodic under the action of a parking function. 

\begin{theorem}
    If $\gcd(m, n) = 1$ and $\p$ an $(m, n)$-parking function with fixed point $\x_p \in V^m$, then $\x_p$ is the centroid of an alcove $A_p$ of the hyperplane arrangement $\cH = \{ x_i - x_j = km , 1 \leq i < j \leq n, k \in \nZ\}$. The set periodic points of $\p$ is exactly $A_p\setminus x_p$ and all points in $A_p\setminus x_p$ have period $n$.
\end{theorem}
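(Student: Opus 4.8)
\medskip
\noindent\textbf{Proof proposal.}
The plan is to recognise the restriction of $T_{\p}$ to a single alcove of $\cH$ as an element of the extended affine Weyl group of type $\widetilde A_{m-1}$, and then to read the dynamics off the action of that group on its Coxeter complex. Observe that $\cH$ is the dilation by $m$ of the braid arrangement on $V^m$, so its affine Weyl group $\widetilde W$ (generated by the coordinate permutations $S_m$ and the translations by $mQ$, with $Q$ the root lattice) sits inside the extended group $\widehat W$ (generated also by translations by $mP$, with $P$ the coweight lattice), whose fundamental group $\Omega=\widehat W/\widetilde W\cong\nZ/m$ acts simply transitively on the $m$ vertices of the fundamental alcove and, more generally, shifts the type of every vertex of the Coxeter complex by a fixed amount. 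By definition the letter map $T_i$ is ``translate $\x$ by $m\mathbf{e}_i-\mathbf{1}$, then return to the dominant cone $x_0\le\cdots\le x_{m-1}$ by the unique sorting permutation''; since $m\mathbf{e}_i-\mathbf{1}$ represents $m$ times a fundamental coweight and the sorting permutation is constant on each closed alcove $A$, the map $T_i|_A$ agrees with a well-defined element $g_{i,A}\in\widehat W$ that carries $A$ bijectively onto the alcove $g_{i,A}(A)$, and its class in $\Omega$ is a generator, independent of $i$ and of $A$. Composing along the chain of $n$ alcoves visited by the letters of $\p$ gives, for every alcove $A$, an element $g_{\p,A}\in\widehat W$ with $T_{\p}|_A=g_{\p,A}$, with $g_{\p,A}(A)$ again an alcove, and with class in $\Omega$ equal to the class of $n$ — a \emph{generator} of $\nZ/m$, because $\gcd(m,n)=1$.

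\medskip
\noindent Next I would prove the first assertion. By the Gorsky--Mazin--Vazirani theorem cited in the introduction the fixed set of $T_{\p}$ has affine dimension $\gcd(m,n)-1=0$, hence consists of one point $\x_p$. Choosing an alcove $A\ni\x_p$, continuity of $T_{\p}$ gives $g_{\p,A}(\x_p)=\x_p$; since an element of $\widehat W$ whose $\Omega$-class generates $\nZ/m$ can fix a point only when that point's minimal face is a full alcove (a proper nonempty subset of $\nZ/m$ is never closed under adding a generator, so the set of types of a lower face is not preserved), the point $\x_p$ is interior to a unique alcove $A_p$. Then $g:=g_{\p,A_p}$ fixes an interior point of $A_p$, so it stabilises $A_p$ and lies in $\mathrm{Stab}_{\widehat W}(A_p)\cong\Omega$; as its class generates $\Omega$, the isometry $g$ cyclically permutes all $m$ vertices of the simplex $A_p$ in a single cycle, so its linear part has no nonzero fixed vector, it has order $m$, and it has exactly one fixed point, the barycentre. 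Hence $\x_p$ is the centroid of $A_p$. Moreover $T_{\p}|_{A_p}=g$ maps the closed alcove onto itself with $g^{m}=\mathrm{id}$, so every $\y\in A_p$ is periodic of period dividing $m$, and because $g$ has no nonzero fixed vector the only point of $A_p$ of period $1$ is $\x_p$; thus every $\y\in A_p\setminus\{\x_p\}$ is a genuinely periodic (non-fixed) point, of period a divisor of $m$ exceeding $1$ (generically the full order), which is the period recorded in the statement.

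\medskip
\noindent It remains to rule out periodic points outside $A_p$, and this is the step I expect to be the main obstacle. Equivalently I must show that the ``alcove flow'' $A\mapsto g_{\p,A}(A)$ has a single cycle, namely the loop at $A_p$ — that is, every alcove of $\cH$ inside $V^m$ reaches $A_p$ after finitely many applications of $T_{\p}$; this is in substance the assertion that the Gorsky--Mazin--Vazirani procedure reversing the Pak--Stanley map terminates. I would establish it with a combinatorial Lyapunov function on alcoves, for instance the number of hyperplanes of $\cH$ separating a given alcove from $A_p$, verifying letter by letter that the operation ``add $m\mathbf{e}_i$, subtract $\mathbf{1}$, resort'' never increases this number and strictly decreases it unless the current alcove is already $A_p$ — bookkeeping the position of the moving alcove relative to the finitely many walls crossed along the chain attached to $\x_p$; the care needed for the letter $0$ and for the bounding walls of $V^m$ is where the real effort sits. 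Granting such a function, any $T_{\p}$-orbit meeting an alcove other than $A_p$ enters $A_p$ once and never leaves, hence is not periodic, so the periodic set of $\p$ is exactly $A_p\setminus\{\x_p\}$; combined with the previous paragraph this shows that every point of it has the asserted period, completing the proof.
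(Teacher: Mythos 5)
Your first two paragraphs are sound and take a genuinely more structural route than the paper to the ``easy half'' of the statement: you identify each letter map, restricted to a closed alcove, with an element of the extended affine Weyl group whose class in the fundamental group $\Omega\cong\nZ/m$ is a generator, so that $\p|_{A_p}$ is the order-$m$ rotation stabilizing $A_p$ and fixing only its barycentre. The paper reaches the same conclusions (that $\sigma_{\p,\x_p}$ is a single $m$-cycle and that $\x_p$ is the centroid) by a direct computation with the residues of the coordinates modulo $m$ (\cref{lemma: cords in part} and \cref{Cor: fixed cycle}); your $\Omega$-class argument is cleaner and explains why coprimality of $m$ and $n$ is what makes the cycle full. (Two small points: the dimension count for the fixed set is due to McCammond--Thomas--Williams, not Gorsky--Mazin--Vazirani; and points of $A_p$ on the fixed subspace of $\p^k$ for a proper divisor $k$ of $m$ have period $k<m$, so your parenthetical ``generically the full order'' is the honest version of the claim.)

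The genuine gap is exactly where you flag it: ruling out periodic points outside $A_p$. The Lyapunov function you propose does not exist. The action of $\p$ maps every alcove isometrically onto an alcove, so whenever one application of $\p$ carries a non-$A_p$ alcove onto another non-$A_p$ alcove it preserves both the distance to $\x_p$ and the number of hyperplanes of $\cH$ separating the alcove from $A_p$; \cref{exp: Parking not closer} (with $\w=2100$, $m=3$, $n=4$) exhibits precisely this, with $\w\cdot a=b$ for two distinct alcoves adjacent to $A_\w$. Hence no quantity of the kind you describe can ``strictly decrease unless the current alcove is already $A_p$,'' and the per-letter version fails a fortiori. What the paper actually does is: (i) establish eventual periodicity by pigeonhole --- the orbit meets only finitely many alcoves since the distance to $\x_p$ is weakly decreasing, and a closed alcove admits only finitely many self-isometries (\cref{Periodic}); (ii) observe that once the distance to $\x_p$ stops decreasing, every sorting permutation along the orbit must coincide with the corresponding one along the orbit of $\x_p$ (\cref{SameNorm}); and (iii) prove the key combinatorial fact (\cref{lemma: On Hi i+1}) that, because $\sigma_{\p,\x_p}$ is an $m$-cycle on coordinates with distinct residues modulo $m$, some prefix of some cyclic rotation of $\p$ sends $\x_p$ into an alcove having a facet on a wall $H^0_{i,i+1}$; a periodic orbit not contained in $A_p$ would be separated from the orbit of $\x_p$ by that wall at that step and therefore strictly contracted, a contradiction. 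Step (iii) is the idea your sketch is missing, and it is not a routine bookkeeping exercise.
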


Structure of this paper: In Section 2, we provide background on parking functions, the action of words in $[m]^n$ on $V^m$ from \cite{mccammond_thomas_williams_2019}, the affine braid arrangement, and invariant sets. In Section 3, we define an action on $m$-invariant sets and then prove Theorem 1.1. In Section 4, we focus on monotone parking functions and prove Theorem 1.2. In Section 5, we show that the set of fixed points of a parking function is the union of fixed points of monotone parking functions. In Section 6, we consider the case when $\gcd(m, n) =1$ and classify all the periodic points of an $(m, n)$-parking function proving Theorem 1.3. In section 7, as a corollary of Theorem 1.3 we are able to resolve conjecture $7.9$ from \cite{gorsky_mazin_vazirani_2014} showing an algorithm to invert the map $SP$ does finish in a finite number of steps. 

\section{Background}

\subsection{Rational Parking Functions:}

Classical parking functions arise as a list of preferences from parking cars, first studied by Konheim and Weiss in \cite{Konheim_G_Weiss_Occupancy_Discipline}. Imagine $n$ cars driving on a one-way street with $n$ parking spots ahead. Each car has a preference $w_i = f(i) \in [n] = \{ 0, 1, \dots n-1\}$ for what parking spot it wants to park in. Each car tries to park in its preferred spot. If the preferred parking spot is available, it does; if not, the car parks at the first available parking spot after. If all cars can park, then the function $f: [n] \to [n]$ is a parking function. We write these functions as words $\w = w_{n-1}w_{n-2} \dots w_1w_0$ where $w_i = f(i)$. The requirement that all cars park can be summarized by 

\[
\# \{ j : w_j < i \} \geq i \textnormal{ for } 1 \leq i \leq n.
\]
Any permutation of a parking function is also a parking function. 

\begin{definition}
    A Dyck word is a parking function where $w_0 \leq w_1 \leq \dots \leq w_{n-1}$.
\end{definition}

    Dyck words are named after Dyck paths.
    
\begin{definition}
    A Dyck path is a lattice path on a $n\times n$ grid from $(0,n)$ to $(n,0)$ where the path stays weakly below the main diagonal.
\end{definition}    

    The correspondence between Dyck paths and Dyck words is given by the column height of each horizontal step read left to right. Classical parking functions are permutation of the column heights of Dyck paths.

\vspace{10 pt}

\begin{figure}[ht]
\centering

\begin{subfigure}{0.4\textwidth}

      \begin{center}

\begin{tikzpicture}[scale=0.4]
\draw  [very thin, gray](0,0) grid (9,6);
\draw [very thin, gray] (0, 6) --(9,0);

\draw  (8.5,-.5) node {$0$}; 
\draw  (7.5,-.5) node {$0$}; 
\draw  (6.5,-.5) node {$0$}; 
\draw  (5.5,-.5) node {$2$}; 
\draw  (4.5,-.5) node {$2$}; 
\draw  (3.5,-.5) node {$2$}; 
\draw  (2.5,-.5) node {$2$}; 
\draw  (1.5,-.5) node {$2$}; 
\draw  (0.5,-.5) node {$3$};

\draw [very thick, blue] (9, 0)--(6, 0) --(6, 2) --(1, 2) --(1, 3) --(0, 3) --(0, 6);

\end{tikzpicture}

\end{center}

\caption{ Dyck Path $322222000$ in an $6 \times 9$ grid. }  \label{Figure:RationalDyck}

\end{subfigure}
\hspace{ 10 pt}
\begin{subfigure}{0.4\textwidth}
\begin{center}
\begin{tabular}{ccccc}
000 & 001 & 002 & 011 & 012\\
    & 010 & 020 & 101 & 021\\
    & 100 & 200 & 110 & 102\\
    &     &     &     & 120\\
    &     &     &     & 201\\
    &     &     &     & 210
\end{tabular}
\end{center}
\caption{All $(4, 3)$-parking functions.   }  \label{Figure:All43Parking}

\end{subfigure}

\end{figure}

    We construct rational parking functions by changing the square grid of an Dyck path to an integral rectangle. 

\begin{definition}
    Lattice paths in an $m \times n$ grid from $(0, m)$ to $(n,0)$ that stay weakly below the main diagonal are called $(m, n)$-Dyck paths (see \cref{Figure:RationalDyck}).    
\end{definition}
 
 The column heights of such paths are $(m, n)$-Dyck Words. Permutations of $(m, n)$-Dyck words are $(m, n)$-parking functions. 

\begin{definition}
    An $(m, n)$-parking functions is a word $\w = w_{n-1}w_{n-2}\dots w_1w_0  \in [m]^n$ such that
    \[
        \# \{ j : w_j < i \} \geq \frac{i n}{m} \textnormal{ for } 1 \leq i \leq m.
    \]
\end{definition}

We denote the set of all $(m, n)$-parking functions as $PW_n^m$. In \cref{Figure:All43Parking} all the $(4, 3)-$parking functions in $PW_4^3$ are listed. These parking functions were first studied in \cite{ARMSTRONG2015159,gorsky_mazin_vazirani_2014}.

Often $m$ and $n$ will be coprime. Classical parking functions can be recovered from the coprime case by considering $(n+1, n)$-parking functions. In parts of this paper we consider when $m$ and $n$ are not coprime. Aval and Bergeron have started calling these parking functions rectangular parking functions in \cite{Aval_Bergeron_2015}. We will make it clear when a result holds for any parking functions or just when $m$ and $n$ are coprime.  

\subsection{Acting by a Parking function:}
In \cite{mccammond_thomas_williams_2019} Jon McCammond, Hugh Tomas, and Nathan Williams build a new characterization of $(m, n)$-parking functions based on an action. Words in $\{ 0, 1, \dots , m-1\} = [m]$ act on the Weyl chamber 
\[
V^m = \{ \x = (x_0, x_1, \dots , x_{m-1}) \in \nR^m : \sum_i x_i = 0 \textnormal{ and } x_0 \leq x_1 \leq \dots \leq x_{m-1}\},
\]
where $i$ acts on $\x$ by adding $m$ to the $i$th component, subtracting 1 from every component, then resorting. Differing from \cite{mccammond_thomas_williams_2019} we use the convention that words act one letter at a time from right to left. If $w \in [m]^n$ we can think of $\w$ as a piecewise-linear function from $V^m$ to $V^m$.

\begin{theorem}[\cite{mccammond_thomas_williams_2019}\label{Theorem: Fixed iff Parking}, Theorem 1.1]
Let $w \in [m]^n$, then $\w$ is an $(m, n)$-parking functions if and only if $\w$ has a fix point in $V^m$. The set of fixed points is convex of affine dimension $\gcd(m, n) - 1$  and contained in an affine subset of dimension $\gcd(m, n) -1$. In particular, when $m$ and $n$ are coprime, each parking function has a unique fixed point. 
\end{theorem}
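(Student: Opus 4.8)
The plan is to split the equivalence into its two directions. Each letter $i$ acts by a piecewise-linear map $\sigma_i\colon V^m\to V^m$ preserving the coordinate sum (it adds $m$ once and subtracts $1$ from each of the $m$ coordinates), so $f_\w:=\sigma_{w_{n-1}}\circ\cdots\circ\sigma_{w_0}$ is a piecewise-linear self-map of the cone $V^m$ inside $\{\sum_i x_i=0\}$, and the claim is that $f_\w$ has a fixed point exactly when $\w$ is an $(m,n)$-parking function. Two preliminary remarks I would record: (i) on $V^m$ the coordinates are already sorted, so letter $i$ turns $x_i$ into $x_i+m-1$ (after the global $-1$) and this entry can only move \emph{up} in rank, past exactly the later entries $x_{i+j}$ with $x_{i+j}-x_i<m$; hence $\sigma_i$, and so $f_\w$, is affine on the alcoves of the arrangement $\cH=\{x_a-x_b=km\}$ (or a refinement). (ii) If $\w=uv$ as a concatenation and $f_\w(\x)=\x$, then $f_v(\x)$ is fixed by $f_{vu}$, so ``$f_\w$ has a fixed point'' is invariant under cyclic rotation of $\w$ — useful for bookkeeping, though not enough on its own to reduce to Dyck words.

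For the ``only if'' direction, suppose $\w$ is not a parking function and pick $i_0\in\{1,\dots,m-1\}$ with $b:=\#\{j:w_j<i_0\}<i_0 n/m$. I would track the monovariant $T(\x):=x_0+\cdots+x_{i_0-1}$, the sum of the $i_0$ smallest coordinates. A short case check on a single letter $k$ (the case $k\ge i_0$; and the case $k<i_0$, split on whether the boosted entry $x_k+m-1$ overtakes the current $i_0$-th smallest entry) shows that $\sigma_k$ changes $T$ by at most $-i_0$ when $k\ge i_0$ and by at most $m-i_0$ when $k<i_0$. Summing over the $n$ letters of $\w$, one application of $f_\w$ decreases $T$ by at least $i_0 n-bm\ge 1$, uniformly in $\x$; hence $f_\w$ fixes no point (and in fact every orbit escapes to infinity, as expected).

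For the ``if'' direction I would exhibit the fixed point. The rearrangement class of a parking function $\w$ contains a unique Dyck word, given by the column heights of an $(m,n)$-Dyck path $P$; I would first treat Dyck words by writing the candidate fixed point as the mean-centered ``area vector'' read off from $P$ and checking that feeding the letters of the Dyck word in order walks once around $P$ and returns the vector to itself — this closes up precisely because $P$ stays weakly below the diagonal. For a general parking function I would pass to a combinatorial model of the kind Section~3 develops: encode a point of $V^m$ by a bounded co-bounded $m$-invariant subset of $\nZ$ so that letter $i$ becomes ``delete the $i$-th $m$-generator'', turning ``$f_\w(\x)=\x$'' into ``$\w$ carries the associated invariant set to a shift of itself''. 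Granting existence, the fixed set $F$ is, on each alcove $A$ with $f_\w|_A$ equal to an affine map $g_A$, the slice $\ker(g_A-I)\cap A$; to get that $F$ is convex of affine dimension $\gcd(m,n)-1$ (and in particular a single point when $\gcd(m,n)=1$) I would analyze $g_A-I$ for the alcove carrying $F$: setting $d=\gcd(m,n)$ produces $d-1$ independent ``sliding'' directions on which $g_A$ acts trivially, and a circulant-type computation shows there are no further ones.

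The main obstacle is the ``if'' direction. The monovariant $T$ above cleanly kills every non-parking function, but for a genuine parking function the natural monovariants are merely ``not forced to decrease'', which does not by itself trap an orbit — indeed for a parking function most orbits still run off to infinity, and only the points of one distinguished alcove are periodic. So the real work is the explicit combinatorial construction of the fixed point, equivalently the invariant-set/Dyck-path dictionary under which the defining inequalities of parking functions translate exactly into ``the word sends the associated object to a shift of itself''. Once that dictionary is in hand, the Brouwer-type step that upgrades a stable alcove to a fixed point (using that $f_\w$ is affine on an alcove, so it commutes with averaging over a finite orbit contained in it) and the linear algebra behind the $\gcd(m,n)-1$ dimension count are routine.
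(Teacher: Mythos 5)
This theorem is quoted from McCammond--Thomas--Williams and the paper does not reprove it (indeed, Section 3 uses it as a black box via the map $\phi$), so your proposal is judged on its own terms rather than against an in-paper argument.

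Your ``only if'' direction is complete and correct. The monovariant $T(\x)=x_0+\cdots+x_{i_0-1}$ changes by exactly $-i_0$ under a letter $k\ge i_0$, and by at most $m-i_0$ under a letter $k<i_0$ (with equality when the boosted entry stays among the bottom $i_0$, and change $x_{i_0}-x_k-i_0<m-i_0$ otherwise), so one pass of $\w$ changes $T$ by at most $bm-ni_0\le -1$, an integer bound uniform in $\x$; no fixed point can exist. This is a clean, elementary argument.

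The ``if'' direction, however, has a genuine gap that you partly acknowledge but do not close. First, the reduction to Dyck words is not available: parking functions are closed under \emph{arbitrary} permutations of the letters, while your remark (ii) only shows that existence of a fixed point is invariant under \emph{cyclic} rotation; invariance under general permutations is a consequence of the theorem, not a tool for proving it. Second, the invariant-set dictionary does faithfully translate ``$f_\w(\x)=\x$'' into ``$\w\cdot\Delta=\Delta+n$'', but the hard content is then to produce, for an arbitrary parking function $\w$, some bounded co-bounded $\Delta$ that $\w$ shifts by $n$ --- precisely the step you label ``granting existence.'' (The construction in Section 4 of this paper produces such a $\Delta$ only for the \emph{monotone} representative of each class, so it does not by itself cover general $\w$.) Likewise the dimension count $\gcd(m,n)-1$ is asserted via ``$d-1$ sliding directions'' and a ``circulant-type computation'' without exhibiting either the kernel directions or the argument that there are no others. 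As written, the proposal proves one implication fully and outlines, but does not establish, the other implication and the structure of the fixed-point set.
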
 

\subsection{The Affine Braid Arrangement} \label{Affine Braid}


There are various bijections from parking functions to regions or alcoves of hyperplane arrangements. 
Let $H_{i, j}^k=\{ x_i - x_j = kn \} $.

\begin{definition}
    The affine braid arrangement is the hyperplane arrangement $\cH = \{ H_{i,j}^k : 0 \leq i < j \leq n-1 , k \in \nZ \}$ inside of $W =  \{ \x \in \nR^n : \sum_{i=0}^{n-1} x_i = 0 \} $.
\end{definition}
    Note that we scale our hyperplane arraignments by $n$. The regions of the hyperplane arrangement $H$ are the connected components of $W \setminus H$. The closure of a region of the affine braid arrangement is referred to as an alcove. 

\begin{definition}
    The fundamental alcove $A_0$ is the alcove with boundary conditions given by $x_0 \leq x_1 \leq \dots \leq x_{n-1} \leq x_0 + n$ inside $W$. 
\end{definition}
    Note the fundamental alcove has boundaries given by $H_{i-1, i}^0$ for $i \in \{ 1, \dots , n-1\}$ and $H_{n-1, 0}^1$. 

\begin{definition}
    The height of $H_{i, j}^k$ is $|j - i - nk|$. Note that the height $H_{i, j}^k$ is proportional to the distance between $H_{i, j}^k$ and the center of $A_0$. 
\end{definition}    
      The hyperplanes that bound the fundamental alcove are exactly those of height 1.

\begin{definition}[\cite{Shi_1987}]
    The Shi arrangement is $Shi_n = \{ H_{i, j}^k:  0 \leq i < j \leq n-1, k =0 \text{ or } k = 1\} $.
\end{definition}
    The Shi arrangement is a sub hyperplane arrangement of the affine braid arrangement. The regions of $Shi_n$ were shown to be in bijection with classical parking functions in \cite{Stanley_1998} by Stanley with correspondence from Pak.

\begin{definition}
    The $k$-$Shi$ arrangement is given by $k$-$Shi_n = \{ H_{i, j}^k:  0 \leq i < j \leq n-1, k \in \{ -k+1, -k+2, \dots,k\}  \}$
\end{definition}

    The Pak-Stanley construction extends to the $k$-$Shi$ case, however, the argument is rather technical. For a more geometric approach, see \cite{Mazin_2017}, which took inspiration from the work of Hopkins and Perkinson in \cite{Hopkins_Perkinson_2015}. For a more general bijection, we need to introduce the Sommers region. 

\begin{definition}[\cite{Sommers_2005}]
    The Sommers region $\cS_n^m$ is the region in $W$ bounded by all hyperplanes $H_{i, k}^k$ of height $m$.
\end{definition}

\begin{theorem}[\cite{mccammond_thomas_williams_2019}, Theorem 6.20]
    The alcoves inside the Sommers region are in bijection with parking functions in $PW_n^m$. 
\end{theorem}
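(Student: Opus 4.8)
The plan is to realize both sides in terms of the affine symmetric group $\widetilde{S}_n$ and match them wall by wall. First I would recall the standard identification of the alcoves of the affine braid arrangement $\cH$ inside $W$ with the elements of $\widetilde{S}_n$: the map $w \mapsto w A_0$ is a bijection from $\widetilde{S}_n$ to the set of alcoves, where $\widetilde{S}_n$ acts on $W$ by affine permutations (permuting coordinates and translating by the root lattice scaled by $n$, matching the normalization $H_{i,j}^k = \{x_i - x_j = kn\}$). Under this identification, $w A_0$ lies on a prescribed side of a hyperplane $H_{i,j}^k$ exactly when the corresponding affine root lies in, or is excluded from, the inversion set of $w$ — equivalently, a single linear inequality on the one-line notation of $w$. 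So membership of $w A_0$ in the Sommers region $\cS_n^m$, being on the inner side of every height-$m$ hyperplane, becomes an explicit finite system of inequalities on $w$: for all $0 \le i < j \le n-1$ and the relevant $k$, $w$ does not cross any $H_{i,j}^k$ of height $\ge m$ the wrong way.

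Next I would define a labeling map $\Phi$ from alcoves in $\cS_n^m$ to words in $[m]^n$. To $A = w A_0$ assign $\w = w_{n-1}\cdots w_0$, where $w_i$ records a count of the hyperplanes $H_{i,j}^k$ (with $j$ ranging and $k$ chosen so that the hyperplane has height in $\{1,\dots,m\}$) that separate $A$ from $A_0$ — a ``floor count'' in direction $i$, in the spirit of the Pak--Stanley labeling for the Shi arrangement. A short check using the inequalities from the previous step shows $0 \le w_i \le m-1$, so $\Phi(A) \in [m]^n$. The core of the argument is then to show that the Sommers-membership inequalities on $w$ are equivalent, coordinate by coordinate, to the parking inequalities $\#\{ j : w_j < i \} \ge in/m$ for $1 \le i \le m$ from Definition~1.6; this is the bookkeeping step that converts the geometric ``walls of height $\le m$'' into the combinatorial staircase condition, and it is where the scaling by $n$ of $\cH$ and the dilation by $m$ defining $\cS_n^m$ interact.

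To finish I would construct the inverse: given $\w \in PW_n^m$, read the letters as a recipe for crossing walls starting from $A_0$ (cross $w_i$ of the direction-$i$ walls), and use the parking condition to show the resulting alcove is well defined — independent of the order in which walls are crossed, because the parking inequalities guarantee we never overshoot a Sommers wall — lies in $\cS_n^m$, and has $\Phi$-label $\w$. Injectivity of $\Phi$ is then immediate since $w$ is recovered from its floor counts together with the Sommers inequalities. In the coprime case one can instead bypass the explicit inverse entirely: $\cS_n^m$ is a dilation of the fundamental alcove by $m$, hence contains exactly $m^{n-1}$ alcoves by a volume computation, which equals $|PW_n^m|$, so an injection in either direction suffices.

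The main obstacle I expect is twofold. First, getting the coordinate-by-coordinate equivalence between the Sommers wall conditions and the parking inequalities exactly right, including the boundary cases $i=1$ and $i=m$ and the correct choice of $k$ for each pair $(i,j)$ — this is routine in spirit but delicate. Second, the non-coprime case $d = \gcd(m,n) > 1$, where $\cS_n^m$ is no longer a simplex and its alcove count must still be shown to equal $|PW_n^m|$; here one must quotient by the lattice of deck transformations along the ``free'' directions of $\cS_n^m$, which corresponds on the combinatorial side to the equivalence relation on $(m,n)$-invariant sets of \cite{gorsky_mazin_vazirani_2017} (equivalently, the resorting step in the $V^m$ action), and check that $\Phi$ descends to a bijection modulo that identification.
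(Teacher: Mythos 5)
This statement is quoted from \cite{mccammond_thomas_williams_2019} (their Theorem 6.20); the paper under review does not prove it, so the comparison is against the source's argument. Your proposal is essentially an attempt to prove directly that the Pak--Stanley-type labeling $SP$ (defined in Section 2.4 of the paper) is a bijection from alcoves of the Sommers region to $PW_n^m$, and the two steps you dismiss as ``bookkeeping'' and ``a recipe for crossing walls'' are exactly where the proof lives. Showing that the floor-count label of a Sommers alcove satisfies the parking inequalities is the easy direction (it is in \cite{gorsky_mazin_vazirani_2014}); the hard direction is injectivity and surjectivity, i.e.\ that $SP$ is invertible. Your proposed inverse --- ``cross $w_i$ of the direction-$i$ walls, well defined because the parking inequalities guarantee we never overshoot'' --- does not work as stated: the walls $H_{i,j}^k$ in different ``directions'' share the index $j$, crossings in different directions do not commute, and there is no order-independence to appeal to. This is precisely why bijectivity of $SP$ remained a conjecture of Gorsky--Mazin--Vazirani for general coprime $(m,n)$, and why Section 7 of the present paper still needs the full periodic-point machinery (\cref{corollary: x0 converge to fixed point }) just to show that a candidate inversion algorithm terminates.

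The actual route in \cite{mccammond_thomas_williams_2019} is different in kind: one first proves that $\w\in[m]^n$ is a parking function iff it has a fixed point under the action on $V^m$ (their Theorem 1.1, quoted here as \cref{Theorem: Fixed iff Parking}), then shows that in the coprime case the unique fixed point is the centroid of an alcove and that this alcove lies in the Sommers region, and finally that the assignment $\p\mapsto A_\p$ is injective and hits every Sommers alcove. Your fallback counting argument ($m^{n-1}$ alcoves versus $m^{n-1}$ parking functions in the coprime case) is legitimate once an injection is in hand, but the injection itself has to come from the fixed-point construction, not from the wall-crossing recipe. Finally, be careful with the non-coprime case: when $\gcd(m,n)=d>1$ only the pairs $(i,j)$ with $j-i\equiv\pm m\pmod n$ contribute height-$m$ walls, the Sommers region need not be a simplex, and the fixed-point set of a parking function is a $(d-1)$-dimensional convex set rather than a centroid, so your proposed ``quotient by deck transformations'' would need to be replaced by the equivalence-class analysis of \cite{gorsky_mazin_vazirani_2017} carried out in Sections 2.5 and 5 of this paper.
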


    The Sommers region and $k$-$Shi$ arrangements are related through the affine symmetric group. 

\begin{definition}
    The affine symmetric group $\widetilde{S_n}$ consists of bijections $\omega: \nZ \to \nZ$ such that 
\begin{align*}
    \text{1) }&\omega(x+n) = \omega(x) + n & \text{2)}&\sum_{k=0}^{n-1} \omega(k) = \frac{(n-1)n}{2}.
\end{align*}
\end{definition}

We write elements of $\widetilde{S_n} $ in window notation $\omega = [\omega(0), \omega(1), \dots , \omega(n-1)]$. The generators $\widetilde{S_n}$ are given by $s_i = [0, 1, \dots ,i-1, i+1, i, i+2,\dots, n-1]$ for $i \in \{1, \dots , n-1\}$ and $s_0 = [-1, 1,, 2 \dots, n-2, n]$. 
$\widetilde{S_n}$ acts on $W$ with the generator $s_i$ acting by reflection across the hyperplane $H^0_{i-1, i}$ for $0 < i < n$ and $s_0$ acting by reflecting across the hyperplane $H^1_{n-1, 0}$. This action sends alcoves to alcoves is free, and is transitive on alcoves. The map $\omega \to \omega(A_0)$ gives a bijection from $\widetilde{S_n}$ to the alcoves of the affine braid arrangement.

\begin{figure}[ht]
\begin{center}

    \begin{tikzpicture}[scale = 1.6]
\draw    (-1.5,-0.866025) -- (2.5 ,-0.866025) -- (.5,3*0.866025) --(-1.5,-0.866025)  ;
\draw[thick] (-1, 0) -- (2, 0);
\draw (-.5, .866025) -- (1.5, .866025);
\draw (0, .866025*2) -- (1, .866025*2);
\draw[thick] (0, .866025*2) -- (1.5, .866025*-1);
\draw (-.5, .866025) -- (.5, .866025*-1);
\draw (-1, 0) -- (-.5, .866025*-1);
\draw[thick] (-.5, -.866025) -- (1., .866025*2);
\draw (.5, -.866025) -- (1.5, .866025);
\draw (1.5, -.866025) -- (2, 0);

\node at (.5, .866025*2.4)    {\tiny $[4$ $-13]$};
\node at (0, .866025*1.4)    {\tiny $[-134]$};
\node at (.5, .866025*1.6)    {\tiny $[-143]$};
\node at (1, .866025*1.4)    {\tiny $[042]$};
\node at (-.5, .866025*.4)    {\tiny $[312]$};
\node at (0, .866025*.6)    {\tiny $[132]$};
\node at (.5, .866025*.4)    {\tiny $[123]$};
\node at (1, .866025*.6)    {\tiny $[024]$};
\node at (1.5, .866025*.4)    {\tiny $[204]$};
\node at (-1, .866025*-.6)    {\tiny $[-226]$};
\node at (-.5, .866025*-.4)    {\tiny $[321]$};
\node at (0, .866025*-.6)    {\tiny $[231]$};
\node at (.5, .866025*-.4)    {\tiny $[213]$};
\node at (1, .866025*-.6)    {\tiny $[015]$};
\node at (1.5, .866025*-.4)    {\tiny $[105]$};
\node at (2, .866025*-.6)    {\tiny $[150]$};

\end{tikzpicture}

\caption{Here is $\cS_3^4$ with alcoves labeled by affine permutations. The fundamental alcove is labeled by $[123]$. } 
\label{Figure:Sommers34}

\end{center}
\end{figure}
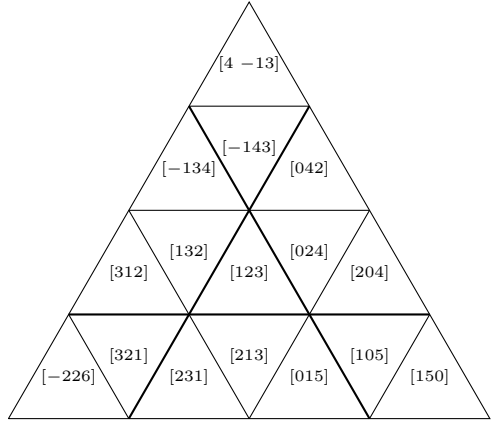

 The affine permutations that correspond to alcoves inside the Sommers region are call $m$-restricted, the set of these affine permutations is denoted ${}^m\widetilde{S_n}$, it can be seen that $${}^m\widetilde{S_n} = \{ \omega \in \widetilde{S_n} : \omega^{-1}(x) < \omega^{-1}(x+m), \forall x \in \nZ\}.$$ If 
 $\omega^{-1} \in { }^m \widetilde{S_n} $ then it is called $m$-stable. The set of $m$-stable affine permutations is $$\widetilde{S_n^m} = \{  \omega \in \widetilde{S_n} : \omega(x) < \omega(x+m), \forall x \in \nZ \}.$$

\begin{theorem}[\cite{Fishel_Vazirani_2010}, Theorem 7.1]
    The alcoves inside $\cS_n^{kn+1}$ region are in bijection with the regions of the $k$-$Shi_n$. The bijection takes an alcove labeled by $\omega \in \widetilde{S_n^m}$ and maps it to the region containing the alcove labeled by $\omega^{-1}$. The alcove labeled by $\omega^{-1}$ is the unique alcove in that region with the least number of hyperplane $H_{i, j}^k$ between it and $A_0$. 
\end{theorem}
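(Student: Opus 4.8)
The result is due to Fishel and Vazirani \cite{Fishel_Vazirani_2010}; here is how I would organize the argument. Write $m=kn+1$ throughout, so that $\gcd(m,n)=1$. As recalled above, $\omega\mapsto\omega(A_0)$ is a bijection from $\widetilde{S_n}$ onto the alcoves of $\cH$, and the alcoves of $\cS_n^m$ are indexed by $\widetilde{S_n^m}$ (namely, by the $m$-restricted set ${}^m\widetilde{S_n}$ recalled above, up to $\omega\mapsto\omega^{-1}$). Thus the statement is equivalent to the following: writing $R(\omega)$ for the region of $k$-$Shi_n$ that contains the alcove $\omega^{-1}(A_0)$, the map $\omega\mapsto R(\omega)$ is a bijection from $\widetilde{S_n^m}$ onto the set of regions of $k$-$Shi_n$, and $\omega^{-1}(A_0)$ is the unique alcove of $R(\omega)$ with the fewest hyperplanes $H_{i,j}^t$, $t\in\nZ$, between it and $A_0$. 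The plan is to prove the ``innermost alcove'' property first, deduce injectivity from it, and obtain surjectivity from a count.

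The tool is the standard coordinatization of alcoves: attach to an alcove $A$ and a pair $0\le i<j\le n-1$ the integer $c_{ij}(A)$ with $c_{ij}(A)\,n<x_i-x_j<(c_{ij}(A)+1)\,n$ for $x\in A$. The tuple $(c_{ij}(A))_{i<j}$ determines $A$ and satisfies the usual alcove (cocycle) inequalities, one has $c_{ij}(A_0)=-1$ for every pair, and for $A=\omega(A_0)$ the entry $c_{ij}(A)$ is given by an explicit floor-function formula in $\omega^{-1}$ (essentially $\lfloor(\omega^{-1}(i)-\omega^{-1}(j))/n\rfloor$). Two alcoves lie in the same region of $k$-$Shi_n$ precisely when, for each pair, the values $c_{ij}$ truncated to the window $\{-k+1,\dots,k-1\}$ (with the two overflow cases ``below'' and ``above'') agree. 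Granting that each region of $k$-$Shi_n$ has a unique alcove closest to $A_0$ (either invoked as known or re-derived en route), the substance is to show that this innermost alcove is the one whose coordinates are, entry by entry, pushed toward the base value $c_{ij}(A_0)=-1$ as far as the region's truncated record permits. The only thing that can go wrong is a conflict between these entrywise choices and the cocycle inequalities linking the $c_{ij}$; the point of the hypothesis $m\equiv 1\pmod n$ is exactly to remove it, since congruence to $1$ is what lets the constraints attached to the distinct residue classes modulo $n$ fit together into one self-consistent system.

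The final step is to recognize the innermost-alcove condition on $\omega^{-1}(A_0)$ as $m$-stability of $\omega$: substituting the coordinate formulas into ``$\omega^{-1}(A_0)$ is the innermost alcove of $R(\omega)$'' and collecting terms yields exactly $\omega(x)<\omega(x+m)$ for all $x\in\nZ$, i.e.\ $\omega\in\widetilde{S_n^m}$. This simultaneously gives the innermost-alcove statement and the injectivity of $\omega\mapsto R(\omega)$, and surjectivity is then forced by counting: $|\widetilde{S_n^m}|$ equals the number of alcoves of $\cS_n^m$, which equals $|PW_n^m|=m^{n-1}=(kn+1)^{n-1}$ by the bijection with parking functions recalled above together with $\gcd(m,n)=1$, and $(kn+1)^{n-1}$ is the number of regions of $k$-$Shi_n$. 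Unwinding the composite region $\mapsto$ (its innermost alcove) $\mapsto$ (the inverse of its index) recovers the description in the statement.

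I expect the dictionary in the second paragraph to be the real obstacle: making ``innermost alcove of a $k$-$Shi_n$ region'' precise in the $c_{ij}$ coordinates, and showing it decouples into entrywise minima. A priori the alcove inequalities could force a compromise among the entries, and it is only the special value $m=kn+1$ that makes the entrywise minima simultaneously attainable; once that is in hand, pushing the resulting inequalities through the $\omega\leftrightarrow\omega^{-1}$ substitution is routine bookkeeping.
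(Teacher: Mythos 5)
The paper never proves this statement: it is quoted verbatim as background with a citation to Fishel and Vazirani, so there is no in-paper argument to compare yours against. Judged on its own terms, your outline does follow the standard route for this result (Shi's alcove coordinates $c_{ij}$, the description of $k$-$Shi_n$ regions by truncated coordinate data, minimal alcoves of regions, and a final count), which is indeed the architecture of the cited proof.

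That said, the proposal stops essentially where the content of the theorem begins, and three load-bearing steps are asserted rather than established. First, the existence and uniqueness of a minimal alcove in each $k$-$Shi_n$ region is itself a nontrivial theorem (Shi for $k=1$, Athanasiadis in general); you flag it as ``invoked as known or re-derived en route,'' but without it the injectivity of $\omega\mapsto R(\omega)$ does not follow, since injectivity rests on each region having exactly one innermost alcove. Second, the claim that the innermost alcove is obtained by pushing each coordinate $c_{ij}$ independently toward the base value $-1$ subject to the region's truncated record, and that these entrywise minima are simultaneously realized by an actual alcove (i.e., satisfy Shi's admissibility inequalities linking $c_{ij}$, $c_{ik}$, $c_{kj}$), is the heart of the matter; you correctly identify it as the obstacle, but the only justification offered --- that congruence of $m$ to $1$ modulo $n$ ``removes the conflict'' --- is too vague to check and is exactly what needs proof. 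Third, the translation of the innermost-alcove condition into $\omega(x)<\omega(x+m)$ for all $x$ is dismissed as routine bookkeeping, yet it is precisely the computation that makes the theorem true and should be carried out (note also that your statement must be reconciled with the paper's own convention that Sommers alcoves are labeled by $m$-restricted permutations while the theorem speaks of $m$-stable ones; your ``up to $\omega\mapsto\omega^{-1}$'' papers over a point where sign and inverse conventions routinely go wrong). The surjectivity-by-counting step is sound in outline, though it imports Athanasiadis' count $(kn+1)^{n-1}$ of $k$-$Shi_n$ regions, a result comparable in depth to the theorem itself. In short: the skeleton matches the known proof, but none of the decisive steps is actually done.
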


\begin{definition} \label{def: Centriod}
    The centroid of an alcove is the unique point $(x_0, x_1, \dots x_{m-1} )$ in the interior of the alcove such that all the numbers
    $$
    \frac{m+1}{2} - mx_0, \quad \frac{m+1}{2} - mx_1, \quad \frac{m+1}{2} - mx_{m-1}
    $$
    are all integers. 
\end{definition}

\begin{theorem}[Lemma 2.9 \cite{gorsky_mazin_vazirani_2014}]
    Centroids of alcoves exist. Moreover, if $(x_0, \dots, x_{m-1}) \in \omega(A_0)$ is the centroid of $\omega(A_0)$ then 
    \[
    \omega^{-1} = [ \frac{m+1}{2} - mx_0, \quad \frac{m+1}{2} - mx_1, \quad \frac{m+1}{2} - mx_{m-1} ]
    \]

\end{theorem}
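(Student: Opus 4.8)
The plan is to extract both claims — existence and uniqueness of the centroid, and the explicit window formula — at once from the ``abacus'' (quasi-periodic extension) model of the alcoves of the affine braid arrangement. First I would pin down the ambient space: for $[\tfrac{m+1}{2}-mx_0,\dots,\tfrac{m+1}{2}-mx_{m-1}]$ to be a window of an element of $\widetilde{S_m}$ its entries must sum to $\tfrac{m(m-1)}{2}$, which forces $\sum_i x_i = 1$; so I set up the affine braid arrangement and the fundamental alcove $A_0$ inside $\{(x_0,\dots,x_{m-1}):\sum_i x_i = 1\}$. Write $\Phi$ for the affine map $x\mapsto(\tfrac{m+1}{2}-mx_0,\dots,\tfrac{m+1}{2}-mx_{m-1})$, a dilation by $-m$ composed with a shift: a point of $\{\sum = 1\}$ is a centroid candidate exactly when $\Phi(x)\in\nZ^m$, and $\Phi$ carries the candidates bijectively onto the integer points of $\{\sum = \tfrac{m(m-1)}{2}\}$.

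For existence and the formula, fix $\omega\in\widetilde{S_m}$ and let $[a_0,\dots,a_{m-1}]$ be the window of $\omega^{-1}$, so $a_i=\omega^{-1}(i)$, the $a_i$ form a complete residue system mod $m$, and $\sum a_i=\tfrac{m(m-1)}{2}$. I would take the explicit point $x$ with $x_i:=\tfrac1m\bigl(\tfrac{m+1}{2}-a_i\bigr)$; it lies in $\{\sum=1\}$ and satisfies $\Phi(x)=[a_0,\dots,a_{m-1}]$ by construction — which is exactly the asserted identity — so everything reduces to checking $x\in\mathrm{int}(\omega(A_0))$. For this, extend $x$ quasi-periodically to $\widetilde x\colon\nZ\to\nR$ in the convention under which $\widetilde x$ is monotone precisely on $A_0$; using $\omega^{-1}(i+km)=a_i+km$ one computes $\widetilde x(j)=\tfrac1m\bigl(\tfrac{m+1}{2}-\omega^{-1}(j)\bigr)$ for every $j\in\nZ$, so $\widetilde x$ is a strictly monotone function of $\omega^{-1}(j)$. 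Hence the permutation sorting the values $\widetilde x(j)$ into order is precisely $\omega$, which is exactly the statement that $x$ lies in $\omega(A_0)$; and since $\omega^{-1}$ is injective the values $\widetilde x(j)$ are distinct, so $x$ is interior, off every wall. Running over all $\omega$, every alcove has a centroid.

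Uniqueness then follows quickly: if $x'\in\mathrm{int}(\omega(A_0))$ also has $\Phi(x')\in\nZ^m$, then interiority forces no two coordinates of $x'$ to differ by an integer, so the entries of $\Phi(x')$ are a complete residue system mod $m$; lying in $\{\sum=1\}$ forces $\sum\Phi(x')_i=\tfrac{m(m-1)}{2}$; hence $\Phi(x')$ is the window of some $\sigma^{-1}\in\widetilde{S_m}$, and the previous paragraph run backwards puts $x'\in\mathrm{int}(\sigma(A_0))$, so $\sigma=\omega$ and $x'=x$. Thus the centroid is unique and is exactly the point singled out by the formula.

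The main obstacle I expect is purely bookkeeping of conventions: the orientation of $A_0$ (increasing versus decreasing coordinates), the precise quasi-periodic rule $\widetilde x(j+m)=\widetilde x(j)\pm1$, the scaling of the hyperplane arrangement, and whether the sorting permutation is $\omega$ or $\omega^{-1}$ all have to be fixed consistently so that the sign $-m$ in $\Phi$ and the constant $\tfrac{m+1}{2}$ make $\widetilde x$ strictly \emph{decreasing} (say) in $\omega^{-1}(j)$ on the nose, with no off-by-$w_0$ or $\omega\leftrightarrow\omega^{-1}$ slip left in the conclusion. I would cross-check on $\omega=e$ — the centroid of $A_0$ should come out to $\Phi^{-1}([0,1,\dots,m-1])$ — and, in low rank, confirm by a direct lattice-point count (for $m=3$ a Pick-type computation shows each alcove has $3$ boundary and exactly $1$ interior centroid candidate, matching the area). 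An alternative if the abacus bookkeeping gets unwieldy is induction on the length $\ell(\omega)$: the base case is the explicit centroid of $A_0$, and the inductive step reduces crossing one wall of an alcove to left-multiplying $\omega^{-1}$ by a simple reflection, i.e.\ transposing two cyclically adjacent window entries — the same computation, reorganized.
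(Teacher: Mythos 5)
The paper offers no proof of this statement---it is quoted verbatim from Lemma 2.9 of Gorsky--Mazin--Vazirani---so there is nothing in-paper to compare against; your argument is the standard one and is correct in substance: parametrize the integrality points by windows via the affine map $\Phi$, verify membership in $\omega(A_0)$ through the quasi-periodic extension and the sorting characterization of alcoves, and obtain uniqueness by running the dictionary backwards. Two of the convention issues you flag deserve to be made explicit, because one of them is load-bearing rather than cosmetic. First, the window-sum mismatch: the paper's $\widetilde{S_n}$ is $0$-indexed with $\sum_k\omega(k)=\frac{(n-1)n}{2}$, while $\sum_i\bigl(\frac{m+1}{2}-mx_i\bigr)=\frac{m(m+1)}{2}-m\sum_i x_i$; your move to $\{\sum_i x_i=1\}$ reconciles this but conflicts with the paper's $V^m\subset\{\sum_i x_i=0\}$---the alternative reading is that the formula is written in GMV's $1$-indexed normalization, where the sum works out on $\{\sum_i x_i = 0\}$. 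Second, and more seriously, your uniqueness step ``interiority forces no two coordinates of $x'$ to differ by an integer'' is valid only for the \emph{unscaled} arrangement $x_i-x_j=k$; under the scaling the surrounding paper adopts ($x_i-x_j=km$), a point interior to an alcove can have two coordinates differing by $1$, and each alcove then contains $m^{m-1}$ integrality points rather than one (for $m=3$, both $(-4,0,4)$ and $(-4+\tfrac{1}{3},\,0,\,4-\tfrac{1}{3})$ lie in the interior of a single alcove of $\{x_i-x_j=3k\}$ and both satisfy the integrality condition), so uniqueness genuinely fails in that normalization. Your proof is therefore right, but only after committing to GMV's unscaled arrangement throughout; this is precisely the bookkeeping trap you anticipated, and it is one the host paper's own Definition \ref{def: Centriod} does not escape.
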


\subsection{Alcoves and the Action of a Parking Function:} \label{SubSec: Alcoves and the action of a Parking Function}

We return to a few results about how the action of parking functions relates to alcoves. 

\begin{theorem}[\cite{mccammond_thomas_williams_2019}, Lemma 2.3]
    The action of a word $\w \in [m]^n$ on $V^m$ is a weak contraction. If $A$ is an alcove, then $w|_A : A \to w(A)$ is a linear isometry and $w(A)$ is an alcove.
\end{theorem}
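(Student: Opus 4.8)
\emph{Proof proposal.} The plan is to reduce to the action of a single letter, split that action into an isometric translation followed by the coordinate-sorting map, and observe that sorting is $1$-Lipschitz. A word acts as the composition of its letters, and a composition of weak contractions is a weak contraction, so it suffices to treat one letter $i \in [m]$. Its action factors as $\mathrm{sort} \circ \tau_i$, where $\tau_i$ is translation by $v_i = m e_i - (1,\dots,1)$ (add $m$ to the $i$th coordinate, then subtract $1$ from all coordinates) and $\mathrm{sort}:\nR^m \to V^m$ rearranges the coordinates into weakly increasing order. Since the entries of $v_i$ sum to $0$, the map $\tau_i$ is an isometry of the ambient hyperplane $\{\sum_k x_k = 0\}$, so all the content is carried by $\mathrm{sort}$.

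The key fact is that $\mathrm{sort}$ is a weak contraction in the Euclidean norm. The shortest route uses the rearrangement inequality: $\mathrm{sort}$ preserves the norm of each vector, and $\langle \mathrm{sort}(\x), \mathrm{sort}(\x')\rangle = \max_{\pi}\langle \x, \pi \x'\rangle \ge \langle \x, \x'\rangle$, so expanding $\|\mathrm{sort}(\x) - \mathrm{sort}(\x')\|^2 = \|\x\|^2 + \|\x'\|^2 - 2\langle \mathrm{sort}(\x), \mathrm{sort}(\x')\rangle$ gives $\|\mathrm{sort}(\x) - \mathrm{sort}(\x')\| \le \|\x - \x'\|$. (Alternatively one realizes $\mathrm{sort}$ as a bubble-sort composition of adjacent ``folds'' $f_k$, where $f_k$ swaps coordinates $k$ and $k{+}1$ on the half-space $\{x_k > x_{k+1}\}$ and is the identity elsewhere; each $f_k$ is a fold across a hyperplane, hence nonexpansive, and the composition is nonexpansive.) Together with the previous paragraph, each letter, and hence every word, acts as a weak contraction.

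For the alcove statement, fix an alcove $A$ of the arrangement $\{x_p - x_q \in m\nZ\}$. On $A$ the sign of each difference $x_p - x_q$ and its position relative to the lattice $m\nZ$ is constant. After applying $\tau_i$, the difference $x_p - x_q$ is unchanged for $p,q \ne i$ and is shifted by $\pm m$ when one of $p,q$ equals $i$; in every case the sign of the shifted difference is constant on $\tau_i(A)$, so the permutation $\sigma = \sigma_A$ that sorts the coordinates is the same at every interior point of $\tau_i(A)$. Hence on $\tau_i(A)$ the map $\mathrm{sort}$ agrees with the linear permutation map $P_\sigma$, an isometry, and $w|_A = P_\sigma \circ \tau_i|_A$ is an affine isometry. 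Finally, both $\tau_i$ and $P_\sigma$ are automorphisms of the arrangement $\{x_p - x_q \in m\nZ\}$: $P_\sigma$ permutes its hyperplanes by permuting indices, and $\tau_i$ sends $\{x_p - x_q = km\}$ to $\{x_p - x_q = km\}$ when $i \notin \{p,q\}$, to $\{x_i - x_q = (k+1)m\}$ when $p=i$, and to $\{x_q - x_i = (k-1)m\}$ when $q=i$. So $w|_A$ is a composition of arrangement automorphisms (in fact an element of $\widetilde{S_m}$) and carries the alcove $A$ to an alcove $w(A)$. The general word then follows by applying one letter at a time, each step taking an alcove to an alcove by an isometry.

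I expect the only genuine obstacle to be the nonexpansiveness of $\mathrm{sort}$, for which the rearrangement-inequality argument above is the cleanest; the rest is bookkeeping. The one subtlety worth care is that the action of a letter is only \emph{piecewise} an affine isometry — it is linear on each alcove but the sorting permutation $\sigma_A$ genuinely depends on $A$ — so the second half of the statement must be argued alcove-by-alcove rather than globally.
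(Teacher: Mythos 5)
Your proposal is correct, and the second half (the alcove statement) is argued exactly as the paper sketches it: a letter acts as an isometric translation $\tau_i$ followed by sorting, the translated alcove lies on a fixed side of every hyperplane $x_k = x_{k+1}$, so the sorting permutation is constant on it and the letter restricts to an affine isometry carrying alcoves to alcoves. Where you genuinely diverge is the nonexpansiveness of $\mathrm{sort}$: you prove it via the rearrangement inequality ($\mathrm{sort}$ preserves norms and can only increase inner products), whereas the paper's own argument (given later, in Lemma \ref{SameNorm}) realizes the resorting as a sequence of folds across hyperplanes $x_j = x_{j+1}$ and applies the triangle inequality at the crossing point $L$. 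The rearrangement-inequality route is shorter and cleaner if all you want is the weak contraction; the folding route is what the paper actually needs downstream, because it gives the sharper statement that the distance \emph{strictly} decreases whenever two points are resorted by different permutations (and, conversely, that equality of distances forces equality of the sorting permutations), which is the engine behind Lemmas \ref{SameNorm} and \ref{Periodic}. Your parenthetical alternative via folds is precisely that argument, so nothing is missing; just be aware that for the later applications the strictness, not merely the $1$-Lipschitz bound, is the point.
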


 Indeed, if $\x, \y \in A$ then acting by letter $k$ translates $\x$ and $\y$ to the same alcove, where they are on the same side of every simple hyperplane $H^0_{i-1, i}$. Thus, these points will be resorted in the same way.

\begin{theorem}[\cite{mccammond_thomas_williams_2019}, Lemma 2.3]
    When $m$ and $n$ are coprime, the unique fixed point of a parking function $\p \in PW_n^m$ is the centriod of an alcove. 
\end{theorem}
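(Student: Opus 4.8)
The plan is to combine the two preceding results (the action of a word is a weak contraction that sends alcoves isometrically to alcoves, and when $\gcd(m,n)=1$ a parking function $\p$ has a unique fixed point $\x_p$) with the characterization of centroids in terms of $m$-stable affine permutations. First I would observe that since $\p$ acts as a piecewise-linear weak contraction on $V^m$, and $V^m$ is subdivided into alcoves on each of which $\p$ is a linear isometry, the fixed point $\x_p$ must lie in the closure of some alcove $A$. I would then argue that $\x_p$ actually lies in the \emph{interior} of a unique alcove $A_p$: if $\x_p$ lay on a wall, the local picture of $\p$ near $\x_p$ would be a composition of isometries of the adjacent alcoves, and a short argument (using that $\p$ is a genuine contraction in directions transverse to the fixed-point set, which here is $0$-dimensional since $\gcd(m,n)=1$) forces $\x_p$ away from any wall; alternatively one can cite the coprime case of the earlier fixed-point theorem to know the fixed set is a single point and run the centroid argument directly.

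Next, the heart of the argument: I want to identify $\x_p$ as the centroid of $A_p$. The cleanest route is to track what the letters of $\p$ do at the level of affine permutations. Each alcove $A$ in $V^m$ corresponds to an $m$-stable affine permutation $\omega_A \in \widetilde{S_n^m}$ (with $A = \omega_A(A_0)$ type labeling), and acting by the letter $i$ should correspond to a specific simple-reflection-type modification. Following \cite{mccammond_thomas_williams_2019}, the action of $\w$ on alcoves matches the combinatorics of the Pak--Stanley / Sommers correspondence, so that the alcove $A_p$ is exactly $\omega^{-1}(A_0)$ for the $m$-restricted $\omega$ attached to $\p$ under Theorem 6.20. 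The key numerical fact to extract is that the fixed point $\x_p$, being preserved by $\p$, has coordinates satisfying the integrality constraint of \cref{def: Centriod}: I would show $\frac{m+1}{2} - m (x_p)_i \in \nZ$ for all $i$ by unwinding one application of the word — adding $m$ to a coordinate, subtracting $1$ from all, resorting — and checking this operation preserves the set of points whose coordinates meet that congruence condition, together with the fact that there is a point in each alcove meeting it (Lemma 2.9 of \cite{gorsky_mazin_vazirani_2014}, quoted above). Uniqueness of the centroid in $A_p$ then pins $\x_p$ down.

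For the second half — that the periodic points of $\p$ are exactly $A_p \setminus \{\x_p\}$ with common period $n$ — I would use that $\p$ restricted to $A_p$ is an isometry onto the alcove $\p(A_p)$, hence $\p$ permutes alcoves in a finite cycle (the number of alcoves in the Sommers region is finite), and an alcove $A$ is periodic iff iterating $\p$ returns it to $A$; since $\p$ fixes $A_p$ (its closure contains the fixed point and $\p$ sends alcoves to alcoves preserving incidence with $\x_p$), $A_p$ is itself a period-$1$ alcove for the induced map on alcoves, so $\p|_{A_p}$ is an isometry of $A_p$ fixing the centroid $\x_p$; I then need to show the order of this isometry is exactly $n$. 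This is where I expect the main obstacle: showing the isometry $\p|_{A_p}$ is not the identity (so $x_p$ is the \emph{only} fixed point) and that its order is precisely $n$ rather than a proper divisor. I would attack this by passing to affine permutations — $\p|_{A_p}$ corresponds to conjugating $\omega$ by the cycle that cyclically shifts the window, and its order should be read off as the order of an $n$-cycle acting on the relevant data — and by using that a parking function is a \emph{word of length $n$} so that the natural "rotation" has order dividing $n$, with the coprimality $\gcd(m,n)=1$ ruling out the degenerate smaller periods. Once the order is shown to be $n$, every point of $A_p$ other than $\x_p$ has period exactly $n$ (the isometry has no other fixed points, being a nontrivial rotation about $\x_p$), and no point outside $\overline{A_p}$ is periodic because $\p$ strictly contracts distance to $\x_p$ off the fixed locus — completing the characterization.
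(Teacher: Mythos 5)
This statement is quoted by the paper from \cite{mccammond_thomas_williams_2019} without proof, so I am judging your sketch on its own terms. Note first that the statement only asserts that the unique fixed point is a centroid; your entire third paragraph (periodic points are $A_p\setminus\{\x_p\}$, period $n$) belongs to a different result (the paper's Theorem 1.3, proved in \cref{sec: Periodic Points}), and its key step is wrong: the action is only a \emph{weak} contraction, so ``$\p$ strictly contracts distance to $\x_p$ off the fixed locus'' is false. The paper's own example with $\w=2100$ exhibits a point $a=(-4,1,3)$ outside $A_\w$ with $\|\w\cdot a-\x_\w\|=\|a-\x_\w\|$; circumventing exactly this is what \cref{SameNorm}, \cref{lemma: On Hi i+1} and \cref{lemma: x to fixed alcove} are for. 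The same false premise undermines your argument that $\x_p$ lies in the \emph{interior} of an alcove (``genuine contraction in directions transverse to the fixed-point set''), and interiority is not cosmetic: you need it to conclude $\p(A_p)=A_p$, which is the hinge of the whole centroid argument. Citing the $0$-dimensionality of the fixed set does not supply it either.

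The core computation in your second paragraph is correct and is the standard route: one letter sends $x_i\mapsto x_i-1$ for $i\neq j$ and $x_j\mapsto x_j+m-1$ and then sorts, each of which preserves the condition $\tfrac{m+1}{2}-mx_i\in\nZ$, so centroids map to centroids. But the concluding logic is garbled: preservation of the congruence set does not by itself show $\x_p$ lies in it. The correct chain is (i) $\x_p$ interior to $A_p$ forces $\p(A_p)=A_p$; (ii) hence $\p$ sends the centroid of $A_p$ to the unique centroid of $A_p$, i.e.\ the centroid is a fixed point of $\p$; (iii) uniqueness of the \emph{fixed point} (coprimality) then gives $\x_p=$ centroid. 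You invoke ``uniqueness of the centroid'' at step (iii), which is the wrong uniqueness, and you never supply step (i). A cleaner route, entirely internal to this paper, avoids both problems: take an $(m,n)$-invariant set $\Delta$ with $\p\cdot\Delta=\Delta+n$; then $\phi(\Delta)$ (\cref{def: phi coprime}) is a fixed point of $\p$ whose coordinates are the recentered $m$-generators of $\Delta$, which form a complete residue system mod $m$ --- this yields interiority and the centroid property simultaneously, and uniqueness of the fixed point finishes the proof.
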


In \cite{gorsky_mazin_vazirani_2014} the authors define a Pak-Stanely map $SP : {}^m\widetilde{S_n} \to PF_n^m$ given by

\[
\mathcal{S P}_\omega(\alpha):=\sharp\left\{\beta \mid \beta>\alpha, 0<\omega(\beta)-\omega(\alpha)<m\right\}.
\]

In conjecture $7.9$ of \cite{gorsky_mazin_vazirani_2014} the authors give a possible algorithm to invert $SP$. In \cite{mccammond_thomas_williams_2019} the authors show $SP$ is a bijection. \cref{corollary: x0 converge to fixed point } shows that the algorithm from \cite{gorsky_mazin_vazirani_2014} converges.

\subsection{Invariant sets}

We follow the definitions and notation from \cite{gorsky_mazin_2013}.

\begin{definition}
    A set $\Delta \subset \nZ$ is an $n$-invariant set if $\Delta + n \subset \Delta$. A set $\Delta \subset \nZ$ is an $(m, n)$-invariant set $\Delta$ if it is both an $m$-invariant set and $n$-invariant set. Denote the set of all $(m, n)$-invariant sets with $M_{m, n}$.
\end{definition}   

\begin{definition}
    An invariant set $\Delta$ is called bounded if there exists a minimum element of $\Delta$. An invariant set $\Delta$ is called co-bounded if there is some $k \in \nZ$ such that $\nZ_{\geq k} \subset \Delta$.
\end{definition}

    See that being co-bounded is the same as $-(\nZ \setminus \Delta)$ begin bounded. 
    
\begin{definition}
    If $\Delta$ is an $n$-invariant set we say $a \in \Delta$ is an \textit{$n$-generator} if $a - n \notin \Delta$. We say $a \notin \Delta$ is an \textit{$n$-cogenerator} if $a + n \in \Delta$. 
     
\end{definition}
See that the $n$-cogenerators of $\Delta$ are $(-n)$-generators of $\nZ\setminus \Delta$. For any $k \in \nZ$ if $\Delta$ is an $(m, n)$-invariant set, then $\Delta + k$ is still an $(m, n)$-invariant set. We often consider $(m, n)$-invariant sets up to these shifts. We have several choices for a representative of each class.

\begin{definition}
    An $(m, n)$-invariant set $\Delta$ is $0$-normalized if $\min(\Delta) = 0$. We say $\Delta$ is $0$-balanced if $\#  \nZ_{\geq 0} \setminus \Delta  = \# \nZ_{<0} \cap \Delta $. 
\end{definition}

\begin{definition}
    The skeleton of $\Delta \in M_{m, n}$ is the union of $m$-generators and $n$-cogenerators. 
\end{definition}

When $\gcd(m, n) = 1$ we can visualize $(m, n)$-invariant sets as periodic lattice paths. Consider a lattice with boxes labeled by $l(x, y) = -nx - my $, where $(x, y)$ are coordinates of the northeast corner of each box. See that the labeling is periodic with period $(m, -n)$. Given an $(m,n)$-invariant set $\Delta$ the corresponding periodic path $P(\Delta)$ is the boundary of the union of boxes labeled by integers in $\Delta$. The $m$-generators are the labels just below $P(\Delta)$ and the $n$-cogenerators are the labels just right of $P(\Delta)$. The skeleton of $\Delta$ are the labels of all boxes with the northeast corner on $P(\Delta). $ (see \cref{Figure:periodic_path}). 

\begin{figure}[ht]
    \begin{center}

    \begin{tikzpicture}[scale=0.75]
\draw  [very thin, gray](0,1) grid (11,10);
\draw [red] (0.5,9.5) node {$0$}; 
\draw (0.5,8.5) node {$5$}; 
\draw (0.5,7.5) node {$10$}; 
\draw (0.5,6.5) node {$15$};

\draw [blue](1.5,9.5) node {$-4$}; 
\draw [blue](1.5,8.5) node {$1$}; 
\draw [red] (1.5,7.5) node {$6$}; 
\draw (1.5,6.5) node {$11$}; 
\draw (1.5,5.5) node {$16$}; 

\draw (2.5,9.5) node {$-8$}; 
\draw (2.5,8.5) node {$-3$}; 
\draw [blue](2.5,7.5) node {$2$}; 
\draw [red] (2.5,6.5) node {$7$}; 
\draw (2.5,5.5) node {$12$};

\draw (3.5,8.5) node {$-7$}; 
\draw (3.5,7.5) node {$-2$}; 
\draw[red] (3.5,6.5) node {$3$}; 
\draw (3.5,5.5) node {$8$}; 
\draw (3.5,4.5) node {$13$};

\draw (4.5,7.5) node {$-6$}; 
\draw [blue](4.5,6.5) node {$-1$}; 
\draw [red] (4.5,5.5) node {$4$}; 
\draw (4.5,4.5) node {$9$}; 
\draw (4.5,3.5) node {$14$};

\draw (5.5,6.5) node {$-5$}; 
\draw [red] (5.5,5.5) node {$0$}; 
\draw (5.5,4.5) node {$5$}; 
\draw (5.5,3.5) node {$10$}; 
\draw (5.5,2.5) node {$15$}; 

\draw (6.5,6.5) node {$-9$}; 
\draw  [blue](6.5,5.5) node {$-4$}; 
\draw [blue](6.5,4.5) node {$1$}; 
\draw [red] (6.5,3.5) node {$6$}; 
\draw (6.5,2.5) node {$11$}; 

\draw (7.5,5.5) node {$-8$}; 
\draw  (7.5,4.5) node {$-3$}; 
\draw [blue](7.5,3.5) node {$2$}; 
\draw [red](7.5,2.5) node {$7$}; 
\draw (7.5,1.5) node {$12$};

\draw (8.5,4.5) node {$-7$}; 
\draw  (8.5,3.5) node {$-2$}; 
\draw [red](8.5,2.5) node {$3$}; 
\draw (8.5,1.5) node {$8$};

\draw  (9.5,3.5) node {$-6$}; 
\draw [blue](9.5,2.5) node {$-1$}; 
\draw [red](9.5,1.5) node {$4$};

\draw  (10.5,3.5) node {$-10$}; 
\draw (10.5,2.5) node {$-5$}; 
\draw [red](10.5,1.5) node {$0$};

\draw [very thick, blue] (0,10)--(1,10) --(1, 8)--(2, 8) --(2, 7) --(4,7) --(4, 6) --(6, 6) --(6, 4) --(7, 4) --(7, 3) --(9, 3) --(9, 2) --(11, 2) --(11, 1);
\end{tikzpicture}

\caption{Periodic lattice path corresponding to the $(4,5)$-invariant set $\Delta=\{0,3,4,5,\ldots\}.$ The $5$-generators of $\Delta$ are $\{0,3,4,6,7\}$ in \textcolor{blue}{blue}, and the $4$-cogenerators are $\{-4,-1,1, 2\}$ in \textcolor{red}{red}. }
\label{Figure:periodic_path}
\end{center}
\end{figure}

We follow the notation from \cite{gorsky2022generic} for the equivalence classes defined in \cite{gorsky_mazin_vazirani_2017} for $(m, n)$-invariant sets. Let $\gcd(m, n) = d \geq 1$ and $ \Theta = \{\Delta_0, \Delta_1, \dots , \Delta_{d-1} \}$ be $0$-balanced $(m/d, n/d)$-invariant sets. For every $(x_0, x_1, \dots x_{d-1}) \in \nR^{d}$ such that $\sum_{i=0}^{d-1}x_i = 0$ let 

\[
\Delta(x_0, \dots x_{d-1}) = \bigcup_{i=0}^{d-1} ( d \Delta_i + x_i).
\]
   For each $\Delta_i$ let $S_i$ be its skeleton. Consider the space of all possible shifts $x_0, \dots x_{d-1}$ and consider $\Sigma_{\Theta} \subset \{\x \in \nR^{d-1} : \sum_{i = 0}^{d-1} x_i = 0 \}$ consisting of all shifts $x_0, \dots, x_{d-1}$ for which there exists $i$ and $j$ such that
\[
(d S_i + x_i) \cap (d S_j + x_j) \neq \emptyset.
\]
See that $\Sigma_\Theta$ is a hyperplane arrangement.
If a shift $(x_0, x_1, \dots, x_{d-1})$ is integer and there is an $x_i$ in each remainder $\mod d$, then $\Delta(x_0, x_1, \dots, x_{d-1})$ is an $(m, n)$-invariant set. 

\begin{definition}
    Two $(m, n)$-invariant sets are equivalent if they correspond to the same connected component of $\Sigma_{\Theta}$. See that we get hyperplane arrangements for each tuple $\Theta$ of $(m/d, n/d)-$invariant sets.     
\end{definition}

To get all the equivalence classes of bounded and co-bounded $(m,n)$-invariant sets, one should consider all possible $d$-tuples of $0$-balanced $(m/d, n/d)$-invariant sets and for each such $d$-tuple consider the set of connected components of the complement to $\Sigma_{\Theta}$ in the space of shifts. These should be considered up to symmetry; if two of the sets are equal $\Delta_i=\Delta_j$, then switching the corresponding shift coordinates $x_i$ and $x_j$ interchanges the connected components corresponding to the same equivalence class of $(m, n)$-invariant sets. Furthermore, if $\Theta'$ is a permutation of $\Theta$ then the regions correspond to the same permutation of shifts.

In \cite{gorsky_mazin_vazirani_2017} the authors define a bijection $\cG$ from equivalence classes of $(m, n)$-invariant sets $ M_{m,n / \sim}$ to $(m, n)$-Dyck paths $Y_{m,n}$, by recording the order in which $n$-cogenerators and $m$-generators are in the skeleton $S$.

\begin{definition}

Given an $(m, n)$-invariant set $\Delta$ have skeleton $S$. Let $\cG: M_{m,n / \sim} \to Y_{m,n}$ be the map where an equivalence class of $(m, n)$-invariant sets is mapped to an $(m, n)$-Dyck path in the following way. The path of $\cG(\Delta)$ can be obtained by rearranging the skeleton of $\Delta$ in increasing order, replacing $n$-generators by up steps and $m$-cogenerators by left steps, from bottom right to top left. 

\end{definition}

\begin{proposition}[\cite{https://doi.org/10.48550/arxiv.1406.1196}, proposition 3.2]
    $\cG(\Delta)$ is an $(m, n) -$Dyck path. 
\end{proposition}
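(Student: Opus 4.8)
The plan is to reduce the proposition to one chain of numerical inequalities, then to the coprime case, and there to use the periodic--path model. Write $d=\gcd(m,n)$, $m=d\mu$, $n=d\nu$. For a bounded, co-bounded $(m,n)$-invariant set $\Delta$, intersecting $\Delta$ with a residue class mod $n$ gives a half--line $\{g_r,g_r+n,g_r+2n,\dots\}$, and intersecting $\nZ\setminus\Delta$ with a class mod $m$ gives a half--line $\{c_s,c_s-m,c_s-2m,\dots\}$; so $\Delta$ has exactly $n$ $n$-generators $g_0,\dots,g_{n-1}$ and exactly $m$ $m$-cogenerators $c_0,\dots,c_{m-1}$, all distinct, and the skeleton $S$ is their union, of size $m+n$. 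Thus $\cG(\Delta)$ is automatically a lattice path with the right number of steps of each kind, and the content of the proposition is that it lies weakly below the diagonal. Unwinding the definitions of $\cG$ and of ``weakly below the diagonal'' of the $m\times n$ box, together with the identities $\#\{r:g_r\le t\}=\#(\Delta\cap[t-n+1,t])$ and $\#\{s:c_s\le t\}=\#(\Delta\cap[t+1,t+m])$, the path condition becomes
\[
 m\cdot\#\bigl(\Delta\cap[t-n+1,t]\bigr)\ \le\ n\cdot\#\bigl(\Delta\cap[t+1,t+m]\bigr)\qquad\text{for all }t\in\nZ.\qquad(\star)
\]
Since $(\star)$ is invariant under shifting $\Delta$ and $t$ together --- indeed it is exactly the assertion that $\cG(\Delta)$ is a Dyck path, so it is constant on equivalence classes --- it is enough to verify it for one convenient representative of each class.

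Next I would reduce $(\star)$ to the case $d=1$. By the construction recalled in the excerpt, every class has a representative $\Delta=\bigcup_{i=0}^{d-1}(d\Delta_i+x_i)$, a disjoint union in which $\Delta_i$ is a bounded, co-bounded $(\mu,\nu)$-invariant set and the $x_i$ run over the residues mod $d$. A direct check gives $\Delta\cap(d\nZ+x_i)=d\Delta_i+x_i$, shows that $a\equiv x_i\pmod d$ is an $n$-generator of $\Delta$ iff $(a-x_i)/d$ is a $\nu$-generator of $\Delta_i$, and likewise for $m$-cogenerators; hence the skeleton of $\Delta$ is $\bigcup_i(dS_i+x_i)$, where $S_i$ is the skeleton of $\Delta_i$. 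Sorting this union by magnitude interleaves the $d$ increasing subsequences $dS_i+x_i$ without altering their internal orders, so the step sequence of $\cG(\Delta)$ is a shuffle of the step sequences of the $\cG(\Delta_i)$. Finally, a shuffle of $(\mu,\nu)$-Dyck paths is a $(d\mu,d\nu)=(m,n)$-Dyck path: at a prefix of the shuffle the numbers of steps of the two kinds are $\sum_i k_i$ and $\sum_i j_i$, where $(k_i,j_i)$ are the corresponding counts in the constituent prefixes; each constituent obeys $\mu k_i\le\nu j_i$, and summing gives $\mu\sum_i k_i\le\nu\sum_i j_i$, i.e.\ $m\sum_i k_i\le n\sum_i j_i$. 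So it suffices to prove $(\star)$ when $\gcd(m,n)=1$.

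The coprime case is the real obstacle, and I would handle it with the periodic lattice path $P(\Delta)$: its $m+n$ skeleton elements are precisely the labels of the $m+n$ boxes of one period of $P(\Delta)$, and $\cG$ re-sorts these boxes by their label $l(x,y)=-nx-my$ and re-reads off a monotone staircase; $(\star)$ is the statement that this staircase stays below the diagonal, which is (the essential content of) Proposition 3.2 of \cite{https://doi.org/10.48550/arxiv.1406.1196}. The naive attempt --- ``the density of $\Delta$ increases upward'', so that $\#(\Delta\cap W)\le\#(\Delta\cap W')$ whenever $W$ lies below $W'$ with $|W|=|W'|$ --- is true (it follows from $m$- or from $n$-invariance alone) but only compares windows of equal length, whereas $(\star)$ compares a length-$n$ window with an adjacent length-$m$ window, and the crude estimate falls short by a term proportional to $m\bmod n$ when $n\nmid m$. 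Closing this gap forces one to use the $m$- and the $n$-invariance of $\Delta$ at the same time --- for instance by counting, for each $x\in\Delta\cap[t-n+1,t]$, how many elements of the translate $x+\langle m,n\rangle\subseteq\Delta$ of the numerical semigroup generated by $m$ and $n$ fall in $[t+1,t+m]$, with control on when two different $x$'s yield the same element --- or, what is more transparent, by reading $(\star)$ directly off the position of one period of $P(\Delta)$ relative to the line of slope $-m/n$.
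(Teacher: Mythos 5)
First, note that the paper does not prove this proposition at all: it is quoted from \cite{https://doi.org/10.48550/arxiv.1406.1196}, so there is no in-paper argument to compare against. Judged on its own terms, your write-up has a genuine gap. The preliminary bookkeeping is sound: the counts of generators and cogenerators, the identities $\#\{r:g_r\le t\}=\#(\Delta\cap[t-n+1,t])$ and $\#\{s:c_s\le t\}=\#(\Delta\cap[t+1,t+m])$, the reformulation $(\star)$, and the reduction to $\gcd(m,n)=1$ via the decomposition $\Delta=\bigcup_i(d\Delta_i+x_i)$ and the observation that a shuffle of $(\mu,\nu)$-Dyck paths is an $(m,n)$-Dyck path are all correct (modulo your transposed generator/cogenerator convention, which the paper itself garbles). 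That reduction is a nice self-contained contribution.

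But the coprime case, which you yourself call ``the real obstacle,'' is never proved: you end by saying it ``is (the essential content of) Proposition 3.2'' of the cited paper, which is exactly the statement being proved — that is circular. Neither of the two strategies you sketch is carried out, and the first one does not close as stated: the natural attempt is to inject the $n|A|$ elements $\{x+im : x\in\Delta\cap[t-n+1,t],\ 1\le i\le n\}$ into the set $\{y+jn : y\in\Delta\cap[t+1,t+m],\ 0\le j\}$, but given $z=x+im\in\Delta$, the unique $y_0\equiv z\pmod n$ lying in the target window need not belong to $\Delta$, since $n$-invariance only propagates membership upward, not downward. So the counting argument needs a genuinely different organization (this is where the geometry of the periodic path $P(\Delta)$ — one period of a monotone lattice path cut by a line of slope $-m/n$ — actually does the work). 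As it stands, the proposal reduces the proposition to its own hardest case and then cites it.
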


\begin{theorem}[\cite{gorsky_mazin_vazirani_2017},Theorem 1.1] \label{theorem: G bijection}
The map $\cG$ is a bijection.
\end{theorem}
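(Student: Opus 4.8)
The plan is to show $\cG$ is a bijection by exhibiting its inverse explicitly, or — more in the spirit of this paper — by a counting argument combined with injectivity. Since the equivalence classes of $(m,n)$-invariant sets and the set $Y_{m,n}$ of $(m,n)$-Dyck paths are both finite, it suffices to show that $\cG$ is injective and that the two sets have the same cardinality; surjectivity then follows. The cardinality of $Y_{m,n}$ is the rational Catalan number $\frac{1}{m+n}\binom{m+n}{m}$ when $\gcd(m,n)=1$ (and the general count is known from \cite{gorsky_mazin_vazirani_2017}), so the real content is a bijective match between equivalence classes and Dyck paths. I would therefore aim directly for a well-defined inverse map.

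First I would unwind the definition of $\cG$ on a single $(m,n)$-invariant set $\Delta$: its skeleton $S$ is the disjoint union of the $m$-generators and the $n$-cogenerators, and reading $S$ in increasing order and recording a left step for each $m$-generator (``$m$-cogenerator'' in the statement should read $m$-generator, matching \cref{Figure:periodic_path}) and an up step for each $n$-cogenerator produces a word in two letters; I would check it has exactly $m$ left steps and $n$ up steps and stays weakly below the diagonal, which is exactly \cite{https://doi.org/10.48550/arxiv.1406.1196}, Proposition~3.2 quoted above. Next, to invert, given a Dyck path $\pi$ I would reconstruct the skeleton up to overall shift: the key observation is that in the periodic-lattice-path picture (valid when $\gcd(m,n)=1$, and handled componentwise via the $\Theta$-decomposition in general) the labels of the boxes with northeast corner on the path $P(\Delta)$ are determined by the step sequence of $\pi$ together with the affine/periodic structure, because consecutive skeleton elements differ by $-n$ across an up step of the underlying lattice and by $-m$ across a left step in a controlled way. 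From the skeleton one recovers $\Delta$ as the $(m,n)$-closure of the set of $n$-cogenerators shifted by $n$ (equivalently, $\Delta$ is forced once we know which residues below the path are occupied), and the ambiguity is exactly the shift, i.e.\ the equivalence class. This shows $\cG$ is surjective, and running the same reconstruction on two invariant sets with the same image forces their skeletons to agree up to shift, hence they lie in the same connected component of $\Sigma_\Theta$, giving injectivity on equivalence classes.

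The main obstacle I anticipate is the non-coprime case: when $d=\gcd(m,n)>1$ the clean periodic-lattice-path model breaks, and one must work with the decomposition $\Delta = \bigcup_{i=0}^{d-1}(d\Delta_i + x_i)$ into $0$-balanced $(m/d,n/d)$-invariant sets and track how the skeleton $S$ interleaves the skeletons $S_i$. Here I would argue that $\cG$ factors through this decomposition: the step sequence of $\cG(\Delta)$ is the shuffle of the $d$ individual step sequences dictated by the relative positions of the shifted skeletons $dS_i + x_i$, so an equivalence class — which is precisely a choice of $\Theta$ together with a chamber of $\Sigma_\Theta$, i.e.\ a consistent ordering of all $\bigcup_i (dS_i+x_i)$ — determines and is determined by the shuffled two-letter word, matching the data of a single $(m,n)$-Dyck path. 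Verifying that every chamber of $\Sigma_\Theta$ yields a valid (weakly-below-diagonal) path and that distinct chambers yield distinct paths is the technical heart; I expect this to reduce to a monotonicity statement about how crossing a hyperplane of $\Sigma_\Theta$ transposes two adjacent skeleton labels and hence swaps two adjacent steps of the path, so chambers biject with linear extensions that are exactly the lattice paths. Once that correspondence is nailed down, injectivity, surjectivity, and well-definedness on equivalence classes all follow at once.
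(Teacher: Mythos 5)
First, a point of order: the paper does not prove this statement --- it is quoted verbatim from \cite{gorsky_mazin_vazirani_2017} (their Theorem 1.1) and used as a black box, so there is no internal proof to compare against. Judged on its own merits, your proposal has real gaps. The counting route is circular: the cardinality of the set of equivalence classes of $(m,n)$-invariant sets in the non-coprime case is itself established \emph{by} this bijection in \cite{gorsky_mazin_vazirani_2017}, so you cannot assume the two sets are equinumerous and then deduce surjectivity from injectivity. (Also, a small but real slip: in the example the $m$-generators correspond to up steps and the $n$-cogenerators to left steps, the opposite of the convention you adopt; the typo in the paper's definition is in ``$n$-generators/$m$-cogenerators,'' not in the step assignment.)

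The substantive gap is the non-coprime case, which you correctly identify as the technical heart but then only sketch. Three things are missing. (i) Given a Dyck path, you never explain how to recover the tuple $\Theta=(\Delta_0,\dots,\Delta_{d-1})$ of $(m/d,n/d)$-invariant sets from the path; without this, the inverse map is not defined. (ii) The claim that every chamber of the complement of $\Sigma_\Theta$ realizes a distinct interleaving, and that every interleaving yielding a weakly-below-diagonal path is realized by some real shift vector, is precisely the content of the theorem and is asserted rather than proved; note that the skeletons $dS_i+x_i$ are periodic, so a single wall of $\Sigma_\Theta$ encodes infinitely many simultaneous coincidences and crossing it is not a single adjacent transposition of the two-letter word. (iii) You do not account for the symmetry quotient: when $\Delta_i=\Delta_j$, swapping $x_i$ and $x_j$ maps one chamber to a different chamber representing the \emph{same} equivalence class, so ``distinct chambers give distinct paths'' is false as stated and injectivity must be argued on the quotient. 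As it stands the proposal reproduces the easy coprime direction and the statement of the hard part, but not its proof.
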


\begin{example} \label{example: map G}
    Consider the two $(6, 4)$-invariant sets  \begin{align*}
        \Delta &= \nZ_{\geq 0} \setminus \{ 1, 2, 3, 5,6 , 7\} \\
        \Delta' &= \nZ_{\geq 0} \setminus \{ 1, 2, 3, 5 , 7,9,13\}.
    \end{align*}
    Whose respective skeletons are 
    \begin{align*}
        S = \{ \textcolor{red}{-4},\textcolor{blue}{0}, \textcolor{red}{2}, \textcolor{blue}{4}, \textcolor{red}{5},\textcolor{blue}{8}, \textcolor{blue}{9}, \textcolor{red}{11}, \textcolor{blue}{13}, \textcolor{blue}{17}  \} \\
        S' = \{ \textcolor{red}{-4}, \textcolor{blue}{0}, \textcolor{red}{2},\textcolor{blue}{4}, \textcolor{red}{7},\textcolor{blue}{8}, \textcolor{blue}{11}, \textcolor{red}{13}, \textcolor{blue}{15}, \textcolor{blue}{19}  \},
    \end{align*}
    where we have colored $4$-cogenerators in \textcolor{red}{red} and $6$-generators in \textcolor{blue}{blue}. These are in the same region of $\Sigma_\theta$ where $\Theta = \{\nZ_{\geq 0} \setminus \{1\}, \nZ_{\geq 0} \setminus \{1\} \}$, which means they are in the same equivalence class. This can be seen as the odd generators and cogenerators of $\Delta$ can be shifted by 2 without crossing any even generators or cogenerators, making $\Delta'$.
    Figure \ref{Figure: example of G} has the result of $\cG(\Delta)$ and $\cG(\Delta ')$.

\begin{figure}[ht]
    \begin{center}

\begin{tikzpicture}[scale=0.75]
\draw  [very thin, gray](0,0) grid (4,6);
\draw  [very thin, gray](4,0) -- (0,6);

\draw [very thick, blue] (0, 6)--(0, 4); 
\draw [very thick, red]  (0, 4)--(1, 4); 
\draw [very thick, blue] (1, 4) --(1, 2); 
\draw [very thick, red]  (1, 2)--(2, 2);
\draw [very thick, blue] (2, 2) --(2, 1); 
\draw [very thick, red]  (2,1)--(3, 1);
\draw [very thick, blue] (3, 1) --(3, 0);
\draw [very thick, red]  (3,0)--(4, 0);

\end{tikzpicture}
\caption{This is $\cG(\Delta) = \cG(\Delta ')$ from Example \ref{example: map G}. }
\label{Figure: example of G}
\end{center}
\end{figure}

\end{example}

\section{Acting on m-invariant sets}

\begin{definition}
    A subset $\Delta \subset \nZ$ is an $m$-invariant set if $\Delta + m \subset \Delta$. The set of all $m$-invariant sets is denoted by $M_m$.   
\end{definition}

\begin{definition}
We define an action of words in $[m] = \{ 0, 1, \dots m-1\}$ on the set $M_m$ of $m$ invariant sets. Let  $\{ a_0< a_1 < \dots < a_{m-1} \} = \Delta \setminus (\Delta + m)$ be the $m$-generators of $\Delta$. For $j \in [m]$ set $j \cdot \Delta = \Delta \setminus\{ a_j \}$, note that $\Delta \setminus \{ a_j \}$ is an $m$-invariant set. Given a word $w = w_{n-1}\dots w_{0}$ in $[m]$ we act on $\Delta$ one letter at a time from right to left 

\[
\w \cdot \Delta = w_{n-1}(\dots(w_1(w_0 \Delta))\dots ).
\]

\end{definition}

\begin{definition} \label{def: phi coprime}
    We define a map $\phi: M_{m}\to V^m $ from $m$-invariant sets into the Weyl Chamber by 
    setting 
    $$\phi(\Delta) = (a_0, a_1, \dots, a_{m-1}) - \frac{\sum_{j = 0}^{m-1} a_j}{m} \mathbb{1},$$
    
    where $a_0, a_1, \dots a_{m-1}$ are the $m$-generators of $\Delta$ and  $\mathbb{1} = (1, 1, \dots, 1)$.
\end{definition}

\begin{lemma}
    The map $\phi : M_{m} \to V^m$ commutes with actions of words in $[m]$ on $V^m$ and $M_{m}$. The image $\phi(\Delta)$ is the centriod of an alcove. 
\end{lemma}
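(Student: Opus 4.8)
The plan is to verify the two assertions separately: first that $\phi$ intertwines the two actions, and second that $\phi(\Delta)$ is always the centroid of an alcove. For the intertwining property, fix an $m$-invariant set $\Delta$ with $m$-generators $a_0 < a_1 < \cdots < a_{m-1}$, and compute $\phi(j \cdot \Delta)$ for a letter $j \in [m]$. When we remove $a_j$ from $\Delta$, the element $a_j + m$ (which was previously not an $m$-generator, since $a_j \in \Delta$) becomes an $m$-generator of $j \cdot \Delta$, while all other $a_i$ remain $m$-generators. So the new multiset of $m$-generators is obtained from $(a_0, \dots, a_{m-1})$ by replacing $a_j$ with $a_j + m$, and then re-sorting into increasing order. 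On the $V^m$ side, the letter $j$ acts by adding $m$ to the $j$-th coordinate, subtracting $1$ from every coordinate (to restore the sum-zero condition after the shift), and re-sorting. The key observation is that $\phi$ already subtracts off the mean $\frac{1}{m}\sum a_i$, so the ``subtract $1$ from every coordinate'' step is exactly the bookkeeping that keeps the mean at $0$ after adding $m$ to one coordinate: adding $m$ to one of $m$ coordinates raises the mean by $1$, so subtracting $1$ everywhere compensates. Thus both operations amount to ``replace the $j$-th entry (in sorted order) by that entry plus $m$, re-sort, and re-center'', and $\phi(j \cdot \Delta) = j \cdot \phi(\Delta)$ follows directly; the general word case follows by induction on word length.

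For the second claim, I would show $\phi(\Delta)$ lies in $V^m$ and satisfies the centroid integrality condition of Definition~\ref{def: Centriod}. Membership in $V^m$ is immediate: the $a_i$ are listed in increasing order so $\phi(\Delta)_0 \le \cdots \le \phi(\Delta)_{m-1}$, and the coordinate sum is $0$ by construction. For the centroid condition, write $x_i = a_i - \frac{1}{m}\sum_k a_k$; then $\frac{m+1}{2} - m x_i = \frac{m+1}{2} + \sum_k a_k - m a_i$, which is an integer since all $a_i \in \mathbb{Z}$. One must also check strictness, i.e.\ that $\phi(\Delta)$ lies in the \emph{interior} of an alcove, equivalently that the $x_i$ are pairwise distinct modulo the relevant lattice: since $a_0 < a_1 < \cdots < a_{m-1}$ are distinct integers that are pairwise incongruent mod $m$ (they are a complete set of representatives of $\mathbb{Z}/m$ among the $m$-generators — this is because each residue class mod $m$ contributes exactly one $m$-generator to a bounded $m$-invariant set), the differences $a_i - a_j$ are never multiples of $m$ for $i \neq j$, so $\phi(\Delta)$ avoids every hyperplane $x_i - x_j \in m\mathbb{Z}$ and hence lies in an alcove interior. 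Combined with the integrality computation, this is exactly the defining property of the centroid.

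The main obstacle I anticipate is the re-sorting bookkeeping in the intertwining step: one has to be careful that the letter $j$ acts on the $j$-th coordinate \emph{in sorted order}, and that after removing $a_j$ and inserting $a_j + m$ the resulting sorted order matches the resorting prescribed by the $V^m$-action. This requires checking that the position into which $a_j + m$ is inserted is consistent on both sides, but since $\phi$ is order-preserving and the $V^m$-action and the generator-replacement act on the sorted tuple in exactly parallel ways, this should reduce to the observation that an order-preserving bijection commutes with ``replace one entry and re-sort''. A secondary subtlety is confirming that one residue class mod $m$ supplies exactly one $m$-generator; this needs $\Delta$ to be bounded (so the minimal element in each residue class exists and is the unique generator in that class), which is part of the standing hypothesis on bounded and co-bounded invariant sets, so it does not obstruct the argument.
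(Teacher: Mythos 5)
Your proof is correct and follows essentially the same route as the paper's: compute the $m$-generators of $j\cdot\Delta$ as the old generators with $a_j$ replaced by $a_j+m$, observe that re-centering (subtracting the mean) absorbs the uniform subtraction of $1$ in the $V^m$-action, and use the fact that the $m$-generators form a complete set of residues mod $m$ to conclude $\phi(\Delta)$ is a centroid. Your write-up is merely more detailed than the paper's (e.g., in justifying the one-generator-per-residue-class fact and the alcove-interior condition), but the underlying argument is identical.
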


\begin{proof}
  See that $\sum_{i=1}^{m-1} (a_i - \frac{\sum_{j-1}^{m-1} a_j}{m}) = 0$, thus $\phi(\Delta) \in V^m$ and $\phi$ is well defined. The $m$-generators of $i \cdot \Delta$ are $$\{a_0, \dots , a_{i-1}, a_{i+i}, \dots, a_{m}, a_i + m \},$$ so 
  $$\phi(i \cdot \Delta) = \sort((a_0, a_1, \dots,a _i + m ,\dots,  a_{m-1}) - \mathbb{1}(\sum_{j = 0}^{m-1} (a_j - 1)/m))$$ 
  sorted. This agrees with $i \cdot \phi(\Delta)$. As the set of $m$-generators of $\Delta$ contains exactly one number from each equivalence class modulo $m$, its clear that $\phi(\Delta)$ is a centriod by Definition \ref{def: Centriod}.
\end{proof}

\begin{theorem}
    If $\w \cdot \Delta = \Delta + n$ then $\w$ is an $(m, n)$-rational parking function. 
\end{theorem}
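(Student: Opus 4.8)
The statement says: if a word $\w \in [m]^n$ fixes (up to the shift by $n$) an $m$-invariant set $\Delta$, then $\w$ is an $(m,n)$-parking function. The natural route is to push the hypothesis through the map $\phi$ of Definition~\ref{def: phi coprime} into $V^m$, use the commutation lemma just proved, and then invoke Theorem~\ref{Theorem: Fixed iff Parking} of McCammond--Thomas--Williams, which says $\w \in PW_n^m$ if and only if $\w$ has a fixed point in $V^m$. Concretely, if $\w\cdot\Delta = \Delta + n$, then applying $\phi$ and using that $\phi$ intertwines the two actions gives $\w\cdot\phi(\Delta) = \phi(\Delta+n)$. So the crux is to show $\phi(\Delta+n) = \phi(\Delta)$, i.e. that $\phi$ is invariant under the global shift $\Delta \mapsto \Delta + n$ — which is essentially immediate, since shifting $\Delta$ by $n$ shifts every $m$-generator by $n$, hence shifts the vector $(a_0,\dots,a_{m-1})$ by $n\mathbb{1}$, and $\phi$ quotients out exactly the all-ones direction. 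Therefore $\phi(\Delta)$ is an honest fixed point of $\w$ in $V^m$, and Theorem~\ref{Theorem: Fixed iff Parking} finishes it.

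First I would record the shift-invariance: for any $k\in\nZ$, the $m$-generators of $\Delta + k$ are $\{a_0+k,\dots,a_{m-1}+k\}$, so $\phi(\Delta+k) = (a_0+k,\dots,a_{m-1}+k) - \tfrac{1}{m}\big(\sum_j (a_j+k)\big)\mathbb{1} = (a_0,\dots,a_{m-1}) - \tfrac{1}{m}\big(\sum_j a_j\big)\mathbb{1} = \phi(\Delta)$. Second, apply $\phi$ to the hypothesis $\w\cdot\Delta = \Delta+n$ and use the Lemma (that $\phi$ commutes with the $[m]$-actions, hence with the action of the whole word $\w$) to get $\w\cdot\phi(\Delta) = \phi(\w\cdot\Delta) = \phi(\Delta+n) = \phi(\Delta)$. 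Third, $\phi(\Delta)\in V^m$ by the Lemma, so $\phi(\Delta)$ is a fixed point of the piecewise-linear map $\w$ in $V^m$; by Theorem~\ref{Theorem: Fixed iff Parking} this forces $\w\in PW_n^m$, i.e. $\w$ is an $(m,n)$-parking function.

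One small point worth being careful about: the Lemma as stated says $\phi$ commutes with the action of single letters; to use it for the word $\w$ one iterates letter-by-letter, which is routine since $\w$ acts on both $M_m$ and $V^m$ by composing its letters right-to-left in the same order. I would also note explicitly that $\w\cdot\Delta$ is again an $m$-invariant set (each letter removes one $m$-generator and preserves $m$-invariance, as observed in the definition of the action), so the equation $\w\cdot\Delta = \Delta+n$ makes sense and $\phi$ is defined on both sides.

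I do not expect a genuine obstacle here — the content was front-loaded into the commutation Lemma and into Theorem~\ref{Theorem: Fixed iff Parking}. If anything, the only thing to watch is that the hypothesis concerns an arbitrary $m$-invariant set $\Delta$ (not assumed bounded, co-bounded, or $(m,n)$-invariant), so I should make sure the argument above never secretly uses finiteness of the generator count beyond the fact that there are exactly $m$ of them (one per residue class mod $m$), which is built into the definition of $M_m$. The companion claims in Theorem~1.1 — that $\Delta$ is then automatically $(m,n)$-invariant, and the converse existence statement — are separate and presumably handled in the surrounding text; this theorem is just the first half.
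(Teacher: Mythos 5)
Your proposal is correct and follows exactly the paper's argument: push the equation through $\phi$, use shift-invariance of $\phi$ and the commutation lemma to get $\w\cdot\phi(\Delta)=\phi(\Delta)$, and conclude via Theorem~\ref{Theorem: Fixed iff Parking}. The extra care you take (extending the commutation from letters to words, checking $\phi$ is defined on both sides) is routine and consistent with what the paper leaves implicit.
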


\begin{proof}
    The map $\phi$ sends shifts of $\Delta$ to the same point so $\w \cdot \phi(\Delta) = \phi( \w \cdot \Delta ) = \phi( \Delta + n ) = \phi(\Delta)$. Thus $\w$ has a fixed point in $V^m$ and is an $(m, n)$-parking function.
\end{proof}

From now on, we denote $\p$ as an $(m, n) $-parking function. 

\begin{lemma}\label{shiftset}
If $\p \cdot \Delta = \Delta + n$ then $\Delta$ is an $(m, n)$-invariant set. 
\end{lemma}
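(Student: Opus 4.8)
\textbf{Proof proposal for Lemma \ref{shiftset}.}

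The plan is to show that $\Delta$ is $n$-invariant, i.e. $\Delta + n \subset \Delta$, given that $\Delta$ is $m$-invariant (by hypothesis it lies in $M_m$) and $\p \cdot \Delta = \Delta + n$. The key observation is that the action of a single letter $j \in [m]$ only \emph{removes} an element from an $m$-invariant set, so acting by a word $w \in [m]^n$ removes at most $n$ elements (exactly $n$ counted with the multiplicity of steps, but possibly fewer distinct elements if a removal exposes a new $m$-generator that is later removed — though I expect one can show all $n$ removed elements are distinct here). Thus $\p \cdot \Delta \subseteq \Delta$, and since $\p \cdot \Delta = \Delta + n$, we immediately get $\Delta + n \subseteq \Delta$. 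That is the whole content if the containment $\p\cdot\Delta \subseteq \Delta$ is granted; the real work is making that precise and checking the count.

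First I would record the elementary fact that for any $\Delta' \in M_m$ and any $j \in [m]$, we have $j \cdot \Delta' = \Delta' \setminus \{a_j\} \subsetneq \Delta'$ and $j\cdot\Delta'$ is again $m$-invariant (this is already asserted in the definition). Iterating, $\p \cdot \Delta \subseteq \Delta$ and in fact $|\Delta \setminus (\p \cdot \Delta)|$ — as a "density" statement, since these are infinite sets, I would phrase it via the co-bounded/bounded structure or via counting in a fundamental domain — is at most $n$. Concretely: at each of the $n$ letters we delete exactly one integer, and deletions are never undone (the action only removes), so $\Delta \setminus (\p\cdot\Delta)$ has exactly $n$ elements if all deleted integers are distinct, and in any case $\p\cdot\Delta \subsetneq \Delta$ with the complement finite. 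Combined with $\p\cdot\Delta = \Delta + n$ this gives $\Delta + n \subseteq \Delta$, so $\Delta$ is $n$-invariant, hence $(m,n)$-invariant.

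I would then verify the bounded/co-bounded hypotheses implicit in the setup are preserved so that everything is well-defined: removing finitely many elements keeps $\Delta$ co-bounded, and $m$-invariance forces $\Delta+n \subseteq \Delta$ to also be co-bounded and bounded once $\Delta$ is. The one genuinely delicate point — the main obstacle — is justifying the "density" or cardinality bookkeeping for infinite sets: I need that deleting one $m$-generator per letter for $n$ letters produces a set whose complement in $\Delta$ has size exactly $n$ (or at most $n$), and to rule out the degenerate possibility that $\p \cdot \Delta = \Delta + n$ could hold while $\Delta$ fails $n$-invariance because the "$+n$" shifts things out of range. This is handled by noting $\Delta + n \subseteq \Delta$ is literally equivalent to $\p\cdot\Delta \subseteq \Delta$ under the hypothesis, and the latter is immediate from monotonicity of the single-letter action. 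So the proof is short: state single-letter monotonicity, iterate to get $\p\cdot\Delta\subseteq\Delta$, substitute $\p\cdot\Delta = \Delta+n$, and conclude $\Delta+n\subseteq\Delta$, which together with the standing $m$-invariance of $\Delta$ means $\Delta \in M_{m,n}$.
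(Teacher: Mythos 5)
Your proof is correct and is essentially the paper's argument: the single-letter action only removes elements, so $\p\cdot\Delta\subseteq\Delta$, and substituting $\p\cdot\Delta=\Delta+n$ gives $\Delta+n\subseteq\Delta$. The extra bookkeeping about how many elements are deleted is unnecessary for this lemma (only the containment is needed), but it does no harm.
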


\begin{proof}

We know $\p \cdot \Delta \subset \Delta$, so $\Delta + n \subset \Delta$ thus $\Delta$ is $(m, n)$-invariant.

\end{proof}

A slightly stronger statement is true. At every step we must have an $(m, n)$-invariant set. Let $\Delta^{(i)} = p_{i-1} \dots p_0 \Delta$ where $\Delta^{(0)} = \Delta$.

\begin{corollary} \label{Corollary: Each Step mn}
If $\p \cdot \Delta = \Delta + n$ then $\Delta^{(i)}$ is an $(m, n)$-invariant set for all $0 \leq i \leq n$. 
\end{corollary}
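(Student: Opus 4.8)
The plan is to exploit the fact that the action only ever \emph{removes} elements, so that the sequence $\Delta^{(0)}, \Delta^{(1)}, \ldots, \Delta^{(n)}$ is a decreasing chain, and then to sandwich each intermediate set between $\Delta$ and $\Delta + n$. First I would record that each $\Delta^{(i)}$ is $m$-invariant: this is immediate from the definition of the action, since $\Delta^{(i)}$ is obtained from $\Delta^{(i-1)}$ by deleting a single $m$-generator, and deleting an $m$-generator of an $m$-invariant set yields an $m$-invariant set (the deleted generator $a_j$ satisfies $a_j - m \notin \Delta^{(i-1)}$, so no new violation of $m$-invariance is created).

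Next I would observe that, because $p_i \cdot \Lambda \subseteq \Lambda$ for every $m$-invariant set $\Lambda$ and every letter $p_i$, we have the nested chain
\[
\Delta = \Delta^{(0)} \supseteq \Delta^{(1)} \supseteq \cdots \supseteq \Delta^{(n)} = \Delta + n,
\]
where the last equality is the hypothesis $\p \cdot \Delta = \Delta + n$. In particular $\Delta^{(n)} \subseteq \Delta^{(i)} \subseteq \Delta^{(0)} = \Delta$ for every $0 \le i \le n$.

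Finally I would deduce $n$-invariance of each $\Delta^{(i)}$: from $\Delta^{(i)} \subseteq \Delta$ we get $\Delta^{(i)} + n \subseteq \Delta + n = \Delta^{(n)}$, and from $\Delta^{(n)} \subseteq \Delta^{(i)}$ we conclude $\Delta^{(i)} + n \subseteq \Delta^{(i)}$. Combined with the first step, $\Delta^{(i)}$ is both $m$-invariant and $n$-invariant, hence $(m,n)$-invariant, which is exactly the claim. I do not expect a genuine obstacle here; the only point requiring a line of care is the very first step, verifying that removing an $m$-generator preserves $m$-invariance, but this is already noted in the definition of the action.
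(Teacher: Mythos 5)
Your proposal is correct, but it takes a genuinely different route from the paper. The paper's proof cyclically rotates the word: it sets $\p' = p_{i-1}\dots p_0 p_{n-1}\dots p_i$, computes $\p' \cdot \Delta^{(i)} = p_{i-1}\dots p_0 \cdot (\Delta + n) = \Delta^{(i)} + n$, and then invokes Lemma \ref{shiftset}. That computation silently uses the equivariance of the action under translation, namely $w \cdot (\Delta + n) = (w\cdot\Delta) + n$ (true because the $m$-generators of $\Delta + n$ are those of $\Delta$ shifted by $n$, so the same word removes the shifted elements). Your sandwich argument --- $\Delta^{(i)} + n \subseteq \Delta + n = \Delta^{(n)} \subseteq \Delta^{(i)}$, combined with $m$-invariance being preserved at every step of the action by definition --- avoids that equivariance fact entirely and is arguably the more elementary of the two. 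What the paper's rotation argument buys in exchange is the stronger intermediate statement that each $\Delta^{(i)}$ is itself carried to $\Delta^{(i)} + n$ by an honest (cyclically rotated) parking function, a device the paper reuses later (e.g.\ in the proof of Lemma \ref{lemma: x to fixed alcove}). Either proof is complete; no gaps in yours.
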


\begin{proof}

    Consider $\Delta^{(i)}$ and  set $\p' = p_{i-1}\dots p_0 p_n \dots p_i$ then $$\p'\cdot \Delta^{(i)} =  p_{i-1}\dots p_0 p_n \dots p_ip_{i-1} \dots p_0 \Delta = p_{i-1}\dots p_0 \cdot (\Delta + n) = \Delta^{(i)} + n$$
    By \cref{shiftset} $\Delta^{(i)}$ is an $(m, n)$-invariant set. 
\end{proof}

\begin{example}

 See \cref{Figure:Acting on Set} for the action of $p = 103$ on the $(5, 3)$ invariant set $\Delta = \nZ \setminus \{ 1, 2, 4\}$. We show that $p \cdot \Delta = \Delta + 3$, at each step, we record the $5$-generators and $\Delta^{(i)}$. You can also notice that at each step $\Delta^{(i)}$ is an $(m, n)$-invariant set and we remove elements that are also $n$-generators. Both of these will happen every time you act by a word such that $\p \cdot \Delta = \Delta + n$.
\end{example}

\begin{figure}[ht]
\[
\begin{array}{ccc}
    \w & 5 -gen & \Delta^{(i)} \\
     3& \{ 0, 3, 6, \textcolor{red}{7}, 9 \} & \{ 0, 3, 5, 6, \textcolor{red}{7}, 8, 9, 10, 11, 12, 13, 14, \dots \} \\
    0 & \{ \textcolor{red}{0}, 3, 6, 9, 12 \} & \{ \textcolor{red}{0}, 3, 5, 6, 8, 9, 10, 11, 12, 13, 14, \dots \} \\
    1 & \{  3, \textcolor{red}{5}, 6, 9, 12 \} & \{  3, \textcolor{red}{5}, 6, 8, 9, 10, 11, 12, 13, 14, \dots \} \\
     & \{  3, 6, 9, 10, 12 \} & \{  3, 6, 8, 9, 10 ,11, 12, 13, 14, \dots \} \\
\end{array}
\]
\caption{ Acting on the $(5, 3)$ invariant set $\Delta = \{ 0, 3, 5, 6, 7, 8, 9, 11, 12,\dots \} $ by the parking function $\p = 103$. The $5$-generators being replaced in each step is highlighted in red. The $3$-generators of $\Delta$ are $\{ 0, 5, 7\}$ which is exactly the set of elements that are removed by $\p$.  }
\label{Figure:Acting on Set}
\end{figure}

\begin{lemma}
Let $\p = p_{n-1}, \dots p_{0}$, if $\p \cdot \Delta = \Delta + n$ and $a_{p_i}$ is the $p_i$th $m$-generator of $ \Delta^{(i)}$ then $a_{p_i}$ is an $n$-generator of $ \Delta^{(i)}$. That is, at each step, we remove elements that are also $n$-generators of $\Delta^{(i)}$. 
\end{lemma}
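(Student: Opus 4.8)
The plan is to argue by a "backward" induction on the step index $i$, running from $i=n$ down to $i=0$, using the fact (Corollary 2, "Each Step mn") that every $\Delta^{(i)}$ is $(m,n)$-invariant together with a counting of $m$-generators. The key structural observation is this: if $\Delta^{(i)}$ is $(m,n)$-invariant and we pass to $\Delta^{(i+1)} = p_i \cdot \Delta^{(i)} = \Delta^{(i)}\setminus\{a_{p_i}\}$, then $\Delta^{(i+1)}$ is again $(m,n)$-invariant (again by Corollary 2). Now compare the $n$-generators of $\Delta^{(i)}$ with those of $\Delta^{(i+1)}$. Removing the single element $a_{p_i}$ can only affect $n$-generator status at $a_{p_i}$ itself and at $a_{p_i}+n$: the element $a_{p_i}+n$, if it was in $\Delta^{(i)}$, remains in $\Delta^{(i+1)}$ (since $\Delta^{(i+1)}+n\subset\Delta^{(i+1)}$ forces it to stay once we know $a_{p_i}+n\ne a_{p_i}$ is not removed) and becomes an $n$-generator of $\Delta^{(i+1)}$ precisely because $a_{p_i}$ left. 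So the multiset of $n$-generators changes in a very controlled way as we remove elements one at a time.

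First I would pin down the total count. Since $\Delta$ is $(m,n)$-invariant with $\Delta^{(n)} = \Delta + n$, and since the $n$-generators of $\Delta$ are exactly $(\Delta)\setminus(\Delta+n)$, the set $\Delta$ has exactly $n$ distinct $n$-generators (one in each residue class mod $n$, because $\Delta$ is bounded and co-bounded). Over the course of the $n$ steps we delete exactly $n$ elements (one per letter of $\p$), and the net effect is $\Delta \mapsto \Delta+n$, i.e. we delete precisely the set of $n$-generators of $\Delta$. So the deleted elements $a_{p_0}, a_{p_1}, \dots, a_{p_{n-1}}$ (as elements of $\mathbb{Z}$) are exactly the $n$ $n$-generators of $\Delta$, each used once.

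Next I would upgrade this global statement to the per-step claim. The claim is that $a_{p_i}$, the element deleted at step $i$, is an $n$-generator not merely of $\Delta$ but of the set $\Delta^{(i)}$ it is actually removed from. Here is where I'd use the backward induction together with the "controlled change" observation: suppose $a$ is deleted at step $i$ but $a-n \in \Delta^{(i)}$, i.e. $a$ is not an $n$-generator of $\Delta^{(i)}$. Since deletions at later steps only ever remove elements and the final set is $\Delta+n$, the element $a-n$ must eventually be deleted at some later step $j>i$ (because $a-n \notin \Delta+n$, as $a \notin \Delta$; here I use that $a$, being one of the globally-deleted elements, is an $n$-generator of $\Delta$ so $a\notin\Delta+n$, hence $a-n$ is not forced to stay — wait, I need $a-n\notin\Delta+n$, equivalently $a-2n\notin\Delta$, which may fail). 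To avoid this subtlety I would instead track residues: restrict attention to the residue class of $a$ modulo $n$ and use that within that class, $\Delta^{(i)}$ restricted to it is a "tail" $\{c, c+n, c+2n,\dots\}$ for some $c$ depending on $i$ (since $\Delta^{(i)}$ is $n$-invariant, bounded, co-bounded). The unique $n$-generator of $\Delta^{(i)}$ in that class is its minimum $c$. Deleting an element of that class from an $n$-invariant bounded co-bounded set necessarily deletes its minimum — otherwise the result is not $n$-invariant — contradiction with $\Delta^{(i+1)}$ being $(m,n)$-invariant (Corollary 2). Hence $a_{p_i}$ is the minimum of its residue class in $\Delta^{(i)}$, i.e. an $n$-generator of $\Delta^{(i)}$.

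The main obstacle I anticipate is the bookkeeping in that last step: making rigorous the statement "deleting any non-minimal element of a residue class from an $n$-invariant bounded co-bounded set destroys $n$-invariance." This is intuitively immediate (if $b > c = \min$ is removed but $c$ stays, then $c + n \cdot k = b$ for some $k\ge 1$ means $b-n \in \Delta^{(i+1)}$ but $b\notin\Delta^{(i+1)}$ while $(b-n)+n = b$, violating $\Delta^{(i+1)}+n \subset \Delta^{(i+1)}$) — so really the whole lemma reduces to invoking Corollary 2 ("Each Step mn") and this one-line observation about residue classes. I would therefore structure the proof as: (1) recall $\Delta^{(i+1)} = \Delta^{(i)}\setminus\{a_{p_i}\}$ is $(m,n)$-invariant by Corollary 2; (2) observe that within the residue class of $a_{p_i}$ mod $n$, $n$-invariance of $\Delta^{(i+1)}$ forces the deleted element to have been the minimum; (3) conclude $a_{p_i}$ is an $n$-generator of $\Delta^{(i)}$.
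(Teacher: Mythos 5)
Your final three-step argument is correct and is essentially the paper's own proof: both invoke Corollary \ref{Corollary: Each Step mn} to get that $\Delta^{(i+1)}=\Delta^{(i)}\setminus\{a_{p_i}\}$ is $(m,n)$-invariant, and then observe that if $a_{p_i}-n\in\Delta^{(i)}$ it survives into $\Delta^{(i+1)}$ while $a_{p_i}$ does not, violating $n$-invariance. The backward induction framing, the global count of deleted $n$-generators, and the abandoned digression about $a-2n$ are all unnecessary scaffolding around this one-line contradiction.
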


\begin{proof}
We know $\Delta^{(i)}$ and $\Delta^{(i + 1)}$ are $(m, n)$-invariant sets from \cref{Corollary: Each Step mn} and $\Delta^{(i)}\setminus \Delta^{(i+1)} = \{ a_{\omega_i}\}$. If $a_{p_i}$ is not an $n$-generator of $\Delta^{(i)}$ then $a_{p_i} - n \in \Delta^{(i)}$ and $a_{p_i} - n \in \Delta^{(i+1)}$ but $a_{p_i } \notin \Delta^{(i+1)}$ which contradicts $\Delta^{(i+1)}$ being an $(m, n)$-invariant set. 
\end{proof}

\begin{lemma} \label{lemma: Also n gen of Delta}
If $\p \cdot \Delta = \Delta + n$ and $a_{p_i}$ is the $p_i$th $m$-generator of $ \Delta^{(i)}$ then $a_{p_i}$ is an $n$-generator of $ \Delta$. That is at each step we remove elements that are also $n$-generators of $\Delta$ the original set. 
\end{lemma}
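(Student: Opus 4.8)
The plan is to not try to upgrade the preceding lemma ("$a_{p_i}$ is an $n$-generator of $\Delta^{(i)}$") one step at a time, but instead to use the global relation $\p\cdot\Delta=\Delta+n$ to identify, all at once, the set of all elements removed during the action with the set of $n$-generators of $\Delta$. Concretely, I would show $\{a_{p_0},\dots,a_{p_{n-1}}\}=\Delta\setminus(\Delta+n)$ and then recall that $\Delta\setminus(\Delta+n)$ is by definition exactly the set of $n$-generators of $\Delta$.

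The steps, in order: (1) Observe that the $n$ elements removed are pairwise distinct. Indeed $\Delta^{(i)}=\Delta\setminus\{a_{p_0},\dots,a_{p_{i-1}}\}$ and, by definition of the action, $a_{p_i}$ is an $m$-generator of $\Delta^{(i)}$, hence $a_{p_i}\in\Delta^{(i)}$, so $a_{p_i}$ differs from each $a_{p_j}$ with $j<i$; in particular each $a_{p_i}$ also lies in $\Delta$. Therefore $\Delta^{(n)}=\Delta\setminus\{a_{p_0},\dots,a_{p_{n-1}}\}$. (2) Invoke the hypothesis $\Delta^{(n)}=\p\cdot\Delta=\Delta+n$. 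Since $\Delta+n\subseteq\Delta$ and all the $a_{p_i}$ lie in $\Delta$, removing exactly the $a_{p_i}$ from $\Delta$ yields $\Delta+n$, which forces $\{a_{p_0},\dots,a_{p_{n-1}}\}=\Delta\setminus(\Delta+n)$. (3) For $a\in\Delta$ one has $a\in\Delta\setminus(\Delta+n)$ iff $a-n\notin\Delta$, i.e. iff $a$ is an $n$-generator of $\Delta$ by definition. Combining with (2), every $a_{p_i}$ is an $n$-generator of $\Delta$, as claimed.

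I do not expect a serious obstacle here; the only real subtlety is conceptual, namely recognizing that the preceding lemma alone is insufficient: being an $n$-generator of the intermediate set $\Delta^{(i)}$ does \emph{not} imply being an $n$-generator of $\Delta$ in general (an earlier step could have deleted $a_{p_i}-n$), so the hypothesis $\p\cdot\Delta=\Delta+n$ must be used globally rather than locally. A secondary point worth stating cleanly is step (1): that the action genuinely deletes a fresh element at each stage, so that $\Delta^{(n)}$ is $\Delta$ with exactly $n$ distinct elements removed. As a sanity check one can note that, since $\Delta$ is $n$-invariant, bounded and co-bounded, it has precisely one $n$-generator in each residue class modulo $n$, hence exactly $n$ of them, matching the count of removed elements; this consistency observation is reassuring although it is not needed for the conclusion. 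If one preferred an alternative, the same result follows by an induction tracking the $n$-generators of each $\Delta^{(i)}$ residue class by residue class and observing that $\p\cdot\Delta=\Delta+n$ forces each class to be hit exactly once, but the set-theoretic argument above is shorter and I would present it.
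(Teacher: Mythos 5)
Your proposal is correct and follows essentially the same route as the paper: the paper also argues globally that the $n$ letters remove $n$ elements, that none of the $n$-generators $\Delta\setminus(\Delta+n)$ can survive in $\p\cdot\Delta=\Delta+n$, and hence that the removed elements are exactly the $n$-generators of the original $\Delta$. Your write-up merely makes the distinctness of the removed elements and the resulting set identity more explicit than the paper does.
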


\begin{proof} 
 The action of a letter removes one element of $\Delta$. The set of $n$-generators are given by $\Delta \setminus (\Delta + n)$, which means none of the $n$-generators of $\Delta$ are in $\p \cdot \Delta = \Delta + n$. As $\p$ has length $n$ at each step, we must remove one of the $n$-generators of $\Delta$. 
\end{proof}

\begin{lemma} \label{ lemma: skel of pDelta }
    If $\p \cdot \Delta = \Delta + n$ then the set $\cup_{i =0}^n \mgen( \Delta^{(i)} )$ is the skeleton of $\Delta + n$ as an $(n, m)$-invariant set, note the switch in $n$ and $m$. It is the union of $m$-generators of $\Delta+ n$ and $n$-cogenerators of $\Delta + n$.
\end{lemma}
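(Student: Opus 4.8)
The plan is to follow the algorithm step by step, record how the set of $m$-generators changes when a single letter is applied, and then recognize the resulting union by a short computation with translates of $\Delta$. Write $b_i$ for the element deleted at step $i$, so that $\Delta^{(i+1)} = \Delta^{(i)}\setminus\{b_i\}$ and $b_i$ is the $p_i$-th $m$-generator of $\Delta^{(i)}$. The first step is the one-letter update rule
\[
\mgen\bigl(\Delta^{(i+1)}\bigr)\;=\;\bigl(\mgen(\Delta^{(i)})\setminus\{b_i\}\bigr)\cup\{b_i+m\}.
\]
This holds because $\Delta^{(i)}$ is $m$-invariant, so it meets each residue class modulo $m$ in an up-closed arithmetic progression carrying a single $m$-generator; deleting the generator $b_i$ affects only the class of $b_i$, whose new generator is $b_i+m$ (this element lies in $\Delta^{(i)}$, hence in $\Delta^{(i+1)}$, and its predecessor $b_i$ has just been removed). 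A routine induction on $k$ then gives $\bigcup_{i=0}^{k}\mgen(\Delta^{(i)}) = \mgen(\Delta)\cup\{b_0+m,\dots,b_{k-1}+m\}$, and in particular, for $k=n$,
\[
\bigcup_{i=0}^{n}\mgen\bigl(\Delta^{(i)}\bigr)\;=\;\mgen(\Delta)\;\cup\;\{\,b_0+m,\ b_1+m,\ \dots,\ b_{n-1}+m\,\}.
\]

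Next I would identify $\{b_0,\dots,b_{n-1}\}$ with the set $\Delta\setminus(\Delta+n)$ of $n$-generators of $\Delta$. By \cref{lemma: Also n gen of Delta} each $b_i$ is an $n$-generator of $\Delta$; the $b_i$ are pairwise distinct because the sets $\Delta^{(i)}$ form a strictly decreasing chain and $b_i\in\Delta^{(i)}\setminus\Delta^{(j)}$ for all $j>i$; and since $\Delta$ is a bounded and co-bounded $(m,n)$-invariant set (\cref{shiftset}) it has exactly one $n$-generator in each residue class modulo $n$, hence exactly $n$ of them. Thus the $n$ distinct elements $b_0,\dots,b_{n-1}$ exhaust $\Delta\setminus(\Delta+n)$, and therefore
\[
\bigcup_{i=0}^{n}\mgen\bigl(\Delta^{(i)}\bigr)\;=\;\bigl(\Delta\setminus(\Delta+m)\bigr)\ \cup\ \bigl((\Delta\setminus(\Delta+n))+m\bigr).
\]

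The last step is the set identity
\[
\bigl(\Delta\setminus(\Delta+m)\bigr)\cup\bigl((\Delta\setminus(\Delta+n))+m\bigr)\;=\;\Delta\setminus(\Delta+m+n),
\]
which uses only the inclusions $\Delta+m+n\subseteq\Delta+m\subseteq\Delta$ and $\Delta+m+n\subseteq\Delta+n\subseteq\Delta$ coming from $(m,n)$-invariance of $\Delta$: the inclusion $\subseteq$ is immediate, and for $\supseteq$ one takes $x\in\Delta\setminus(\Delta+m+n)$ and distinguishes the cases $x\notin\Delta+m$ and $x=y+m$ with $y\in\Delta\setminus(\Delta+n)$. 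Finally, $\Delta\setminus(\Delta+m+n)$ is exactly the skeleton of $\Delta+n$: the $m$-generators of $\Delta+n$ are $(\Delta+n)\setminus(\Delta+m+n)$, the $n$-cogenerators of $\Delta+n$ are $\{a: a\in\Delta,\ a\notin\Delta+n\}=\Delta\setminus(\Delta+n)$, and these two sets are disjoint with union $\Delta\setminus(\Delta+m+n)$ because $\Delta+m+n\subseteq\Delta+n\subseteq\Delta$.

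I do not anticipate a serious obstacle; the only delicate points are organizational — getting the one-letter update of the generator set right, and arguing that over the $n$ steps the deleted elements $b_0,\dots,b_{n-1}$ are distinct and form all of $\Delta\setminus(\Delta+n)$, which is precisely where boundedness and co-boundedness of $\Delta$ are used.
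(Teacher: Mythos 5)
Your proof is correct, and it takes a more computational route than the paper's. The paper argues by double inclusion at the level of individual elements: every $x\in\mgen(\Delta^{(i)})$ either survives to become an $m$-generator of $\Delta+n$ or is removed at a later step, in which case \cref{lemma: Also n gen of Delta} identifies it as an $n$-generator of $\Delta$, i.e.\ an $n$-cogenerator of $\Delta+n$; conversely, every $n$-generator of $\Delta$ is removed at some step and hence appears in some $\mgen(\Delta^{(i)})$, while the $m$-generators of $\Delta+n$ are the $i=n$ term of the union. You instead derive the one-letter update rule $\mgen(\Delta^{(i+1)})=(\mgen(\Delta^{(i)})\setminus\{b_i\})\cup\{b_i+m\}$, sum it up by induction, and identify both the union and the skeleton with the single closed-form set $\Delta\setminus(\Delta+m+n)$ via the nested-translate identity $(A\setminus B)\cup(B\setminus C)=A\setminus C$. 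Both arguments hinge on the same key input (the removed elements are exactly $\Delta\setminus(\Delta+n)$, which you could in fact get directly from $\p\cdot\Delta=\Delta\setminus\{b_0,\dots,b_{n-1}\}=\Delta+n$ without the residue-class count). What your version buys is the explicit description of the skeleton of $\Delta+n$ as $\Delta\setminus(\Delta+m+n)$, which is a clean reusable formula; what the paper's version buys is that it makes visible \emph{which} elements of the union are cogenerators versus generators, which is the structure exploited later in \cref{ lemma: G to mono }.
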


\begin{proof}
     First we show the skeleton $S$ of $\Delta + n$ is a subset of $\cup_{i =0}^n \mgen( \Delta^{(i)} )$. In Lemma \ref{lemma: Also n gen of Delta} it's shown that of $\p$ only removes the $n$-generators of $\Delta$. If $x \in \mgen(\Delta^{(i)})$ then either it gets removed at some step of the action or is in $\mgen(\Delta + n)$. If $x$ gets removed, then $x$ must have been an $n$-generator of $\Delta$. Observe that $n$-generators of $\Delta$ are $n$-cogenerators of $\Delta +n$. 
     
    \vspace{10 pt}
     
     Now we show the opposite inclusion. As  $\p \cdot \Delta = \Delta + n$ we know $\p$ must remove all $n$-generators of $\Delta$. For any $n$-generator $x$ of $\Delta$ there is some step where it is removed. That is there is some $i \in \{ 1,  \dots, n-1\}$ such that $\Delta^{(i-i)} \setminus \Delta^{(i)} = \{ x\}$ and $x \in \mgen(\Delta^{(i-1)})$

\end{proof}

    The skeleton of $\Delta$ as an $(m, n)$-invariant set $S$, and  as an $(n, m)$-invariant set $S'$ are related. See that $S' = S + n - m$.

\section{The associated monotone parking function}

In this section, we define the associated monotone parking function to an $(m, n)$-invariant set.

\begin{definition}
    A parking function $\p = p_{n-1}\dots p_0$ is monotone if $p_0 \leq p_1 \leq \dots \leq p_{n-1}$.
\end{definition}

\begin{theorem}
    Given an $(m, n)$-invariant set $\Delta$ there is a unique monotone parking function $\p$ such that $\p \cdot \Delta = \Delta + n$. We call this the associated monotone parking function to $\Delta$. 
\end{theorem}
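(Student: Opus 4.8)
The plan is to prove existence and uniqueness separately, exploiting the action we have already analyzed. For existence, I would produce the monotone parking function directly from the combinatorics of $\Delta$. By Theorem 1.1 (the first displayed theorem of the excerpt), every $(m,n)$-parking function $\w$ fixes some $(m,n)$-invariant set up to the shift by $n$; I want the converse refinement that a \emph{monotone} one works for the \emph{given} $\Delta$. The natural candidate is read off from the bijection $\cG$: let $\cG(\Delta)$ be the associated $(m,n)$-Dyck path, and let $\p$ be the word of column heights of $\cG(\Delta)$, which is a monotone $(m,n)$-parking function by construction (column heights of $(m,n)$-Dyck paths are exactly $(m,n)$-Dyck words, hence monotone parking functions). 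The content is then to check $\p\cdot\Delta=\Delta+n$. For this I would use Lemma~\ref{ lemma: skel of pDelta } in reverse: the skeleton of $\Delta$ (as an $(m,n)$-invariant set) records, in increasing order, an interleaving of $m$-generators and $n$-cogenerators, and $\cG$ reads exactly this interleaving as up/left steps. Acting from right to left by the column heights of $\cG(\Delta)$ should peel off precisely the $n$-generators of $\Delta$ one at a time, in increasing order, because a monotone word forces us to always remove the smallest available $m$-generator that is also an $n$-generator. I would make this precise by induction on the length of the word, maintaining the invariant that after $i$ steps we have removed the $i$ smallest $n$-generators of $\Delta$ and that $\Delta^{(i)}$ is $(m,n)$-invariant (Corollary~\ref{Corollary: Each Step mn}); monotonicity of the word is what pins down "smallest."

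For uniqueness, suppose $\p$ and $\q$ are both monotone with $\p\cdot\Delta=\q\cdot\Delta=\Delta+n$. Apply $\phi$: both are parking functions fixing the centroid $\phi(\Delta)$, which is a single point since $\phi(\Delta)$ is the centroid of an alcove (Lemma on $\phi$). When $\gcd(m,n)=1$ a parking function has a unique fixed point (Theorem~\ref{Theorem: Fixed iff Parking}), but two distinct parking functions can share a fixed point, so $\phi$ alone is not enough — I need to argue at the level of invariant sets. Here is the cleaner route: show that a monotone word $\p$ with $\p\cdot\Delta=\Delta+n$ is \emph{forced} step by step. At step $i$, Lemma~\ref{lemma: Also n gen of Delta} says $p_i$ must be chosen so that the $p_i$th $m$-generator of $\Delta^{(i)}$ is an $n$-generator of $\Delta$. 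Monotonicity $p_0\le p_1\le\cdots$ together with the requirement that \emph{all} $n$-generators of $\Delta$ eventually get removed (Lemma~\ref{lemma: Also n gen of Delta} again, counting: there are exactly $n$ of them and the word has length $n$) leaves no freedom: one shows the only monotone sequence of removals that removes every $n$-generator is the one that removes them in increasing order, and the index $p_i$ of the $m$-generator being removed is then determined by the position of the $i$th smallest $n$-generator among the $m$-generators of $\Delta^{(i)}$. Hence $\p$ is unique, and in particular it coincides with the column-height word of $\cG(\Delta)$ from the existence argument, which is the content of Theorem~1.2.

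The main obstacle I anticipate is the bookkeeping in the existence half: verifying that acting by the column heights of $\cG(\Delta)$, read right to left, removes exactly the $n$-generators of $\Delta$ in increasing order and never "stalls" (i.e., at each step the prescribed index $p_i$ really does point at an $n$-generator of $\Delta^{(i)}$, not at some $m$-generator that fails to be an $n$-generator). This amounts to a careful comparison between the linear order on the skeleton used to define $\cG$ and the order in which generators disappear under the action, and it is where the interleaving structure of $m$-generators and $n$-cogenerators along the Dyck path does the real work. I would handle it by tracking, after each step, which elements of the skeleton of $\Delta$ have been "used up," showing this prefix is always an initial segment in the increasing order, and matching each up-step/left-step of $\cG(\Delta)$ with the corresponding event. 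Once that synchronization lemma is in hand, both existence and the explicit description "$\p$ = column heights of $\cG(\Delta)$" follow, and uniqueness is the forcing argument above.
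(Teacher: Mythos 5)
Your uniqueness half is essentially the paper's argument: by Lemma \ref{lemma: Also n gen of Delta} only $n$-generators of $\Delta$ can ever be removed, all $n$ of them must be removed, and monotonicity forces the removal order to be increasing, which then pins down each letter $p_i$ as the position of the $i$th smallest $n$-generator among the $m$-generators of $\Delta^{(i)}$. The one step you assert without proof (``the only monotone sequence of removals that removes every $n$-generator is the one that removes them in increasing order'') does need the following observation: if $a$ is the smallest not-yet-removed $n$-generator and it is skipped at step $i$, say $a = b^{(i)}_k$ with $p_i > k$, then every later removal is of an element larger than $a$ (all smaller $n$-generators are already gone), so the replacement generators $b+m$ also exceed $a$, $a$ stays the $k$th $m$-generator, and when it is finally removed at step $t>i$ we get $p_t = k < p_i$, contradicting monotonicity.

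The genuine gap is in existence. The crux --- which you correctly flag as ``the main obstacle'' but do not resolve --- is that the greedy process never stalls: for each $i$, the $i$th smallest $n$-generator $a_i$ of $\Delta$ really is an $m$-generator of $\Delta^{(i)} = \Delta \setminus \{a_0,\dots,a_{i-1}\}$, so a letter removing it exists. This is not bookkeeping about $\cG$; it needs an arithmetic argument using both invariances. The paper's: if $a_i - m \in \Delta^{(i)}$, write $a_i - m = \tilde a + kn$ with $\tilde a$ an $n$-generator of $\Delta^{(i)}$ and $k \ge 0$; since $a_i - m < a_i$, we must have $\tilde a = a_j + n$ for some $j<i$, hence $a_i - m - n = a_j + kn$; but the right side lies in $\Delta$ by $n$-invariance, while the left side does not, because $a_i - n \notin \Delta$ and $m$-invariance would otherwise force $a_i - n \in \Delta$. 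With this in hand, monotonicity of the recorded word is immediate (the $m$-generators of $\Delta^{(i)}$ below $a_i$ persist below $a_{i+1}$). Note also that your route is backwards relative to the paper: the identification of the associated monotone word with the column heights of $\cG(\Delta)$ is \cref{ lemma: G to mono }, which is proved \emph{from} this theorem; taking it as the definition of the candidate word would require an independent synchronization proof amounting to the same work, so the direct greedy construction is the cleaner path.
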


The idea is that we create $\p$ in an iterative process by removing the $n$-generators of $\Delta$ from smallest to largest. In each step, we record the position of the removed generator within the set of current $m$-generators.

\begin{proof}

    We show that there exists a monotone parking function that removes the $n$ generators of $\Delta$ in order. Let $\{a_0< a_1 < \dots< a_{n-1} \}$ be the $n$-generators of $\Delta$. Set $p_0 = 0$, as the minimal element must be an $m$ generator and an $n$ generator. For each $i \in \{ 1 , \dots , n-1\}$ 
    we claim that $a_i$ must be an $m$ generator of $\Delta ^{(i)} = \Delta \setminus \{a_0, \dots a_{i-1} \}$. This claims implies that there exists some $0 \leq p_i < m$ such that $p_i \cdot \Delta^{(i)} = \Delta^{(i+1)}$.
    See, that $\Delta^{(i)}$ has $n$ generators 
    $$\{a_0 +n, \dots , a_{i-1} + n, a_{i}, \dots ,  a_{n-1}  \},$$
    not necessarily written in order.
    For the sake of contradiction, assume that $a_i$ is not an $m$-generator of $\Delta^{(i)}$ that is $a_i - m \in \Delta^{(i)}$. Then there must be some $n$ generator $\tilde{a}$ of $\Delta^{(i)}$ with $\tilde{a} + k n = a_i - m $ with $k\geq 0$. For some $j < i$, we must have $\tilde{a} = a_j + n$ where $a_j$ is an $n$ generator of $\Delta$. We know that $a_i - n \notin \Delta$, thus $a_i - m - n \notin \Delta$, we arrive at $a_i - m - n = a_j + kn$ the left side is not in $\Delta$, but the right side is, a contradiction. Thus $a_i$ must be an $m$ generator of $\Delta^{(i)}$. 

    \vspace{10 pt}

    Let $\mgen(\Delta^{(i)}) = \{ b_0^{(i)}, \dots b_{m-1}^{(i)} \}$, there is some $j$ such that $b_j^{(i)} = a_i$, set $p_i = j$. Then we have $p_{i-1}\dots p_0\Delta = \Delta^{(i)}$ and $p_{n-1}\dots p_0\Delta = \Delta + n$, so $\p = p_{n-1} \dots p_0$ must be an $(m, n)$-parking function as it shifts $\Delta$ by $n$. 

    \vspace{10 pt}

    We show that any monotone parking function $\p$ such that $\p\cdot \Delta  = \Delta + n$ must remove the $n$-generators of $\Delta$ from smallest to largest. From Lemma \ref{lemma: Also n gen of Delta}, we know $\p$ must remove all the $n$-generators of $\Delta$. Say $p$ did not remove the $n$-generators in order and let $a_i$ be the first $n$-generator removed out of order. Consider the $m$-generators of $\Delta^{(i)}$ which are $\{ b^{(i)}_0, b^{(i)}_1, \dots, b^{(i)}_{m-1}  \}$, as shown in the first part of this proof, there must be some $k$ with $a_i = b^{(i)}_k$. Then $p_i > k$, as $a_i$ is skipped. Then $a_i$ will remain an $m$-generator until it is removed. Moreover, $a_i$ will remain the $k$th smallest $m$-generator of $\Delta^{t}$ until it is removed, as it is the smallest $n$-generator of $\Delta$. Say it is removed on step $t$, then $p_t = k$ and $p_i > p_t$ with $i < t$. Thus, the monotone parking function $\p$ is unique.

\end{proof}

\begin{example}
    Let $\Delta = \{ 0, 3, 5, 6, 7,8, \dots \} \in M_{3, 5}$, The $5$-generators of $\Delta$ are $\{ 0, 3, 6, 7, 9\}$. At each step we remove the smallest $n$-generator of $\Delta$ that appears as an $m$-generator of $p_{i-1}\dots p_{0}\Delta$. 
    \begin{center}
    \begin{tabular}{c|ccc}
       Stage  &  $p_i$ & $p_{i-1}\dots p_{0}\Delta$ & $m$-generators    \\
        0 &0 & \{ \textcolor{red}{0}, 3, 5, 6, 7,8, \dots \}& \{\textcolor{red}{0}, 5, 7 \} \\
        1 &0 & \{ \textcolor{red}{3}, 5, 6, 7,8,9 \dots \}& \{\textcolor{red}{3}, 5, 7 \} \\
        2 &1 & \{ 5, \textcolor{red}{6}, 7,8,9,10 \dots \}& \{ 5, \textcolor{red}{6},7 \} \\
        3 &1 & \{ 5, \textcolor{red}{7},8,9, 10, 11 \dots \}& \{  5,\textcolor{red}{7},9 \} \\
        4 &1 & \{ 5,8,\textcolor{red}{9}, 10, 11, 12 \dots \}& \{  5,\textcolor{red}{9},10 \} \\
    \end{tabular}
    \end{center}

The element removed at each step is in red. See that $11100 \cdot \Delta = \{ 5, 8, 10, 11, 12, \dots\} = \Delta + 5$, so $\p = 11100$ is the unique monotone $(3, 5)$-associated monotone parking function to $\Delta = \{ 0, 3, 5, 6, 7,8, \dots \} $. 
\end{example}

There is a connection between the associated monotone parking function and the map $\cG$ from \cite{gorsky_mazin_vazirani_2017}.

\begin{lemma} \label{ lemma: G to mono }

The monotone parking function given by the row lengths of the Dyck path $\cG(\Delta)$ is the associated monotone parking function for $\Delta$. 

\end{lemma}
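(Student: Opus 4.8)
# Proof Proposal for Lemma (row lengths of $\cG(\Delta)$ give the associated monotone parking function)

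The plan is to compare two monotone words letter by letter: the word $\p^{\cG}$ read off from the row lengths of $\cG(\Delta)$, and the associated monotone parking function $\p$ constructed in the previous theorem by peeling off the $n$-generators of $\Delta$ from smallest to largest. Since the previous theorem already establishes that $\p$ is the \emph{unique} monotone parking function with $\p\cdot\Delta=\Delta+n$, and since (by an earlier result, e.g. the characterization of $(m,n)$-Dyck words as column/row heights) $\p^{\cG}$ is visibly monotone and is an $(m,n)$-parking function, it suffices to show $\p^{\cG}\cdot\Delta=\Delta+n$; equivalently, it suffices to identify $\p^{\cG}$ with the word produced by the iterative peeling process.

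The key tool is \cref{ lemma: skel of pDelta }, which says that when $\p\cdot\Delta=\Delta+n$, the union $\bigcup_{i=0}^n \mgen(\Delta^{(i)})$ is exactly the skeleton of $\Delta+n$ viewed as an $(n,m)$-invariant set — i.e. the $m$-generators of $\Delta+n$ together with the $n$-cogenerators of $\Delta+n$ — and the relation $S'=S+n-m$ tells us this is just a shift of the skeleton $S$ of $\Delta$. First I would recall that $\cG$ sorts the skeleton $S$ in increasing order and reads an up-step for each $n$-generator and a left-step for each $m$-cogenerator; the length of the $k$-th row (counting from the bottom) of the resulting path is precisely the number of $m$-cogenerators of $\Delta$ that are smaller than the $k$-th $n$-generator of $\Delta$ in $S$. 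So the $k$-th letter of $\p^{\cG}$ equals $\#\{m\text{-cogenerators } c \text{ of }\Delta : c < a_k\}$, where $a_0<a_1<\cdots<a_{n-1}$ are the $n$-generators of $\Delta$.

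Next I would show the peeling process produces exactly this count. In the proof of the previous theorem it is shown that at stage $i$ the element $a_i$ is an $m$-generator of $\Delta^{(i)}=\Delta\setminus\{a_0,\dots,a_{i-1}\}$, and $p_i$ is defined as its rank among $\mgen(\Delta^{(i)})$. I would argue that the $m$-generators of $\Delta^{(i)}$ that are strictly less than $a_i$ are precisely: the $m$-cogenerators of $\Delta$ below $a_i$ (these persist as $m$-generators once enough small $n$-generators have been removed — here one uses that $a_0,\dots,a_{i-1}<a_i$ have all been peeled), and nothing else below $a_i$ survives, since any $m$-generator of $\Delta$ below $a_i$ that is \emph{not} an $m$-cogenerator is itself of the form (an already-removed $n$-generator) $+$ multiples of $n$, hence no longer a generator — this is essentially the contradiction argument already run in the previous proof, reused to control \emph{which} elements below $a_i$ remain. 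Thus $p_i = \#\{m\text{-cogenerators } c<a_i\}$, matching the $k=i$ entry of $\p^{\cG}$.

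The main obstacle I expect is the bookkeeping in the last step: carefully verifying that the set of $m$-generators of $\Delta^{(i)}$ lying below $a_i$ is exactly the set of $m$-cogenerators of $\Delta$ below $a_i$. Both inclusions need a short argument. For "$\subseteq$": an $m$-generator $b<a_i$ of $\Delta^{(i)}$ lies in $\Delta^{(i)}\subseteq\Delta$, so $b$ is either an $m$-generator of $\Delta$ or became one by deletion; a deletion-created $m$-generator $b$ would force $b-m$ to have been removed, i.e. $b-m\in\{a_0,\dots,a_{i-1}\}+ n\mathbb{Z}_{\ge 0}$, and chasing this (as in the previous proof) shows $b$ cannot be below $a_i$ while $a_i-n\notin\Delta$; and an $m$-generator of $\Delta$ that is not an $m$-cogenerator and is below $a_i$ cannot survive the removal of all smaller $n$-generators. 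For "$\supseteq$": every $m$-cogenerator $c<a_i$ of $\Delta$ is never an $n$-generator, so is never removed, and one checks it is an $m$-generator of $\Delta^{(i)}$ once all $n$-generators below it (a subset of $\{a_0,\dots,a_{i-1}\}$) are gone. Granting this, the two monotone words agree coordinatewise, and uniqueness from the previous theorem finishes the proof; alternatively one concludes directly that $\p^{\cG}\cdot\Delta=\Delta+n$. I would also double-check the off-by-$(n-m)$ shift between the $(m,n)$- and $(n,m)$-skeletons does not disturb the \emph{order} of generators and cogenerators, which is all $\cG$ depends on.
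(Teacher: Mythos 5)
Your overall strategy --- compare the two monotone words letter by letter and then invoke the uniqueness of the associated monotone parking function --- is reasonable and runs parallel to the paper's argument, which also reduces the lemma to reading off the positions $p_i$ from the interleaving of the skeleton of $\Delta+n$. However, your key combinatorial identification is wrong. You claim that $p_i$ equals $\#\{\,c : c \text{ an } m\text{-cogenerator of } \Delta,\ c<a_i\,\}$, supported by the assertion that the $m$-generators of $\Delta^{(i)}$ lying below $a_i$ are \emph{precisely} the $m$-cogenerators of $\Delta$ below $a_i$. That set equality cannot hold except vacuously: an $m$-cogenerator of $\Delta$ is by definition \emph{not} an element of $\Delta$, whereas an $m$-generator of $\Delta^{(i)}$ is an element of $\Delta^{(i)}\subset\Delta$, so the two sets are disjoint. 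The counts do not match either; in the paper's own worked example ($m=4$, $n=6$, $\Delta=\{0,4,5,6,8,9,\dots\}$, $m$-cogenerators $\{-4,1,2,7\}$) one has $a_5=13$ and $p_5=3$, but all four $m$-cogenerators are below $13$, and $a_0=0$ has $p_0=0$ while one $m$-cogenerator ($-4$) lies below it. The same example also falsifies your formula for the row lengths.

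The correct identification, and the content of the paper's proof, is that $p_i$ equals the number of $m$-generators of the \emph{final} set $\Delta+n$ (i.e.\ elements of $\mgen(\Delta)+n = \{6,11,12,17\}$ in the example) that are less than $a_i$, equivalently the number of $m$-generators of $\Delta+n$ preceding $a_i$ in the skeleton of $\Delta+n$ viewed as an $(n,m)$-invariant set. The paper establishes this by using monotonicity of $\p$: the $p_i$ smallest $m$-generators $b^{(i)}_0,\dots,b^{(i)}_{p_i-1}$ of $\Delta^{(i)}$ are never removed by the later letters $p_j\ge p_i$ and therefore survive as $m$-generators of $\Delta+n$, while $b^{(i)}_{p_i}=a_i$ becomes an $n$-cogenerator of $\Delta+n$; iterating produces the skeleton of $\Delta+n$ as the interleaved list in which exactly $p_i-p_{i-1}$ $m$-generators sit between $a_{i-1}$ and $a_i$. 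If you replace your count over ``$m$-cogenerators of $\Delta$ below $a_i$'' by this count over ``$m$-generators of $\Delta+n$ below $a_i$'' (and correspondingly fix the reading of the row lengths, minding that $\cG$ reads the path from bottom right to top left), your argument can be repaired, but as written the central step fails.
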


\begin{proof}

    Let $\{a_0, \dots a_{n-1} \}$ be the $n$-generators of $\Delta$ and 
    associated monotone parking function $ \p =p_{n-1}\dots p_{0}$. For each $i\in \{0, \dots n-1\}$ we have that $a_i $ is the $p_i$th $m$-generator of $\Delta^{(i)}$. Let $S$ be the skeleton of $\Delta + n$. 
    For each $i$ let $\mgen(\Delta^{(i)}) = \{ b_0^{(i)}, \dots , b_{m-1}^{(i)} \}$ be the $m$ generators of $\Delta^{(i)}$.

 \vspace{10 pt}
 
    For any monotone parking function we have $p_0 = 0$ as the minimal element of $\Delta$ is an $m$ and $n$ generator. So $a_0 = b_0 = b_{p_0}$ is an $m$-cogenerator of $\Delta+n$ and is the minimal element of $S$. Now consider $p_1 \geq p_0$ and $\{ b_0^{(1)}, \dots , b_{m-1}^{(1)} \}$ the $m$-generators of $p_0\Delta$. We know that $\{b_0^{(1)} , \dots b_{p_1 -1}^{(1)} \}$, a possibly empty set, will not be removed by $p_1$ or by any later letter in $\p$ as $\p$ is monotone. Thus $\{ b_0^{(1)}, \dots b_{p_1 -1}^{(1)} \}$ will be $m$-generators of $\Delta + n$. We also have $b_{p_1}^{(1)} = a_1$ which will be an $n$ cogenerator of $\Delta + n$. So $S$ starts with $\{a_0, b_0^{(1)} , \dots ,b_{p_1 -1}^{(1)}, a_1\}$.
 
  \vspace{10 pt}
 
  Consider $p_2 \geq p_1$ and $\{b_0^{(2)}, \dots , b_{m-1}^{(2)} \}$ the $m$-generators of $p_1p_0\Delta$. As before $\{ b_0^{(2)}, \dots , b_{p_2 -1}^{(2)} \}$ will be $m$-generators of $\Delta + n$. The $m$ generators $\{ b_0^{(2)}, \dots, b_{p_1 -1}^{(2)} \} = \{b_0^{(1)}, \dots , b_{p_1 -1}^{(1)}  \}$ are already accounted for. Again $b_{p_2}^{(2)} =a_2$ is a $n$-cogenerators of $\Delta + n$. Thus $S$ starts
$\{a_0, b_0^{(1)} , \dots ,b_{p_1 -1}^{(1)}, a_1, b_{p_1}^{(2)}, \dots b_{p_2-1}^{(2)} ,a_2\}$
 
 \vspace{10 pt}

We can continue this process. At the last step we will need to add $\{ b_{p_{n-1}}^{(n)}, \dots , b_{m-1}^{(n)} \}$ as they are $m$ generators of $\Delta + n$ that were not added yet. This process will determine all of $S$, we have
\[
S= \{a_0, b_0^{(1)} , \dots ,b_{p_1 -1}^{(1)}, a_1, b_{p_1}^{(2)}, \dots , b_{p_2-1}^{(2)} ,a_2, \dots ,a_i , b_{p_i}^{(i+1)} , \dots , b_{p_{i+1}-1}^{(i+1)} , a_{i+1},  \dots , a_{n-1} ,b_{p_{n-1}}^{(n)}, \dots , b_{m-1}^{(n)} \}.
\]

See that the number of $m$-generators between $a_{i-1}$ and $a_{i}$ is $p_{i} - p_{i-1}$ which is exactly $g_{i}$. The set $S$ is the skeleton of $\Delta + n$.

\end{proof}

\begin{example}
Consider the $(4, 6)$ invariant set 
\[\Delta = \{  0, 4, 5, 6, 8,9,10,11,12, \dots\}.\] For this set the $6$-generators are $\{ 0,4, 5, 8, 9, 13 \}$, The associated monotone parking function is $p = 311000$. At each step, we remove the smallest $6$-generator of $\Delta$ that appears as an $4$-generator of $\Delta^{(i)}$. 
\[
\begin{array}{ccccc}
    i & \Delta^{(i)}&\text{4-generators of } (\Delta^{(i)}) & a_i & p_i  \\
    0 & \{  0, 4, 5, 6, 8,9,10,11, 12, \dots\} & \{ \textcolor{blue}{0}, 5, 6, 11 \} & 0 & 0 \\
    1 & \{   4, 5, 6, 8,9,10,11,12, 13, \dots\} & \{ \textcolor{blue}{4}, 5, 6, 11 \} & 4 & 0 \\
    2 & \{    5, 6, 8,9,10,11,12,13,14, \dots\} & \{  \textcolor{blue}{5}, 6,8, 11 \} & 5 & 0 \\
    3 & \{     6, 8,9,10,11,12,13,14,15, \dots\} & \{  \textcolor{red}{6},\textcolor{blue}{8}, 9, 11 \} & 8 & 1 \\
    4 & \{     6,9,10,11,12,13,14,15,16, \dots\} & \{  6,\textcolor{blue}{9}, 11,12 \} & 9 & 1 \\
    5 & \{     6,10,11,12,13,14,15,16,17, \dots\} & \{  6, \textcolor{red}{11},\textcolor{red}{12},\textcolor{blue}{13} \} & 13 & 3 \\
    6 & \{     6,10,11,12,14,15,16,17,18, \dots\} & \{  6, 11,12,\textcolor{red}{17} \} & &  \\
\end{array}
\]

We have highlighted when an $6$-cogenerator of $\Delta + 6$ is removed from $\Delta$ in \textcolor{blue}{blue} and the first time an $4$-generator of $\Delta + 6$ is skipped by the action \textcolor{red}{red}. The $4$-cogenerators of $\Delta$ are $\{ -4, 1, 2, 7 \}$ and the skeleton of $\Delta$ is $\{ \textcolor{red}{-4}, \textcolor{blue}{0}, \textcolor{red}{1}, \textcolor{red}{2}, \textcolor{blue}{4}, \textcolor{blue}{5}, \textcolor{red}{7},\textcolor{blue}{8}, \textcolor{blue}{9}, \textcolor{blue}{13}  \}$.

\begin{center}

\begin{tikzpicture}[scale=0.75]
\draw  [very thin, gray](0,0) grid (4,6);
\draw  [very thin, gray](4,0) -- (0,6);

\draw [very thick, blue] (0, 6)--(0, 3); 
\draw [very thick, red]  (0, 3)--(1, 3); 
\draw [very thick, blue] (1, 3) --(1, 1); 
\draw [very thick, red]  (1, 1)--(3, 1);
\draw [very thick, blue] (3, 1) --(3, 0); 
\draw [very thick, red]  (3,0)--(4, 0);

\draw  (-.5,.5) node {$3$}; 
\draw  (-.5,1.5) node {$1$}; 
\draw  (-.5,2.5) node {$1$}; 
\draw  (-.5,3.5) node {$0$}; 
\draw  (-.5,4.5) node {$0$}; 
\draw  (-.5,5.5) node {$0$};

\end{tikzpicture}
\end{center}

\end{example}

Combining \cref{ lemma: G to mono } and \cref{theorem: G bijection} one obtains.

\begin{lemma} \label{Mono Bijection}
If $\Delta$ and $\Delta'$ have the same associated monotone parking function then $\Delta \sim \Delta'$
\end{lemma}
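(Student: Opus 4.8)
The plan is to derive this as an immediate consequence of \cref{ lemma: G to mono } together with the fact that $\cG$ is a bijection (\cref{theorem: G bijection}). The key observation is that the associated monotone parking function is a complete invariant of the equivalence class of $\Delta$: by \cref{ lemma: G to mono }, the monotone parking function associated to $\Delta$ is exactly the one read off from the column (row) heights of the $(m,n)$-Dyck path $\cG(\Delta)$, and since $\cG$ depends only on the equivalence class $[\Delta] \in M_{m,n}/\!\sim$, so does the associated monotone parking function.

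First I would note that the associated monotone parking function is well-defined on equivalence classes: if $\Delta \sim \Delta'$, then by definition of $\sim$ they lie in the same connected component of the complement of $\Sigma_\Theta$, hence $\cG(\Delta) = \cG(\Delta')$, and therefore by \cref{ lemma: G to mono } they have the same associated monotone parking function. This already gives one implication. For the statement as written — that equality of associated monotone parking functions forces $\Delta \sim \Delta'$ — I would argue in the reverse direction: the column heights of an $(m,n)$-Dyck path determine the path uniquely (the path is recovered by converting each column height into an up-step/left-step sequence), so if $\Delta$ and $\Delta'$ have the same associated monotone parking function $\p$, then $\cG(\Delta)$ and $\cG(\Delta')$ are both the unique $(m,n)$-Dyck path whose column heights give $\p$; hence $\cG(\Delta) = \cG(\Delta')$. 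Since $\cG$ is a bijection from $M_{m,n}/\!\sim$ to $Y_{m,n}$ by \cref{theorem: G bijection}, it is in particular injective, so $[\Delta] = [\Delta']$, i.e. $\Delta \sim \Delta'$.

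The steps in order: (1) recall from \cref{ lemma: G to mono } that the associated monotone parking function of $\Delta$ equals the word of row/column lengths of $\cG(\Delta)$; (2) observe that passing from an $(m,n)$-Dyck path to its word of column heights is itself injective (indeed a bijection onto monotone parking functions in $PW_n^m$), so the word of column heights determines $\cG(\Delta)$; (3) conclude that if $\Delta$ and $\Delta'$ share an associated monotone parking function then $\cG(\Delta) = \cG(\Delta')$; (4) apply injectivity of $\cG$ from \cref{theorem: G bijection} to get $\Delta \sim \Delta'$.

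I do not expect a serious obstacle here, since all the content is packaged in the earlier results; the only point requiring a sentence of care is step (2), namely that distinct $(m,n)$-Dyck paths have distinct column-height words — but this is essentially the definition of the correspondence between $(m,n)$-Dyck words and $(m,n)$-Dyck paths recalled in Section 2, where the column height of each horizontal step read left to right reconstructs the path. So the proof is genuinely a two-line deduction: \cref{ lemma: G to mono } identifies the associated monotone parking function with $\cG(\Delta)$ (via the column-height encoding), and \cref{theorem: G bijection} makes $\cG$ injective on equivalence classes.
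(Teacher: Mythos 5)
Your proposal is correct and follows essentially the same route as the paper: both arguments note that the associated monotone parking function determines the order of $m$-generators and $n$-cogenerators in the skeleton, hence determines $\cG(\Delta)$, and then invoke the injectivity of $\cG$ from \cref{theorem: G bijection} to conclude $\Delta \sim \Delta'$. Your extra sentence justifying that column heights determine the Dyck path is a reasonable bit of added care but does not change the argument.
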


\begin{proof}
The $n$ cogenerators and $m$ generators in the skeleton of $\Delta$ and $\Delta'$ are in the same order, thus $\cG(\Delta) = \cG(\Delta')$ and $\Delta \sim \Delta'$ as $\cG$ is a bijection by Theorem 1.1 in \cite{gorsky_mazin_vazirani_2017}. 
\end{proof}

\section{Periodic Points} \label{sec: Periodic Points}

In this section, we study the points in $V^m$ that are not fixed by an $(m, n)$-parking functions $\p$.
The action of a parking function $\p$ on $\x$ resorts the coordinates of $\x$ several times. In this section, we discuss this sorting. 

\begin{definition}
Let $\x$ be in the interior of an alcove. We define the permutation $\sigma_{i, \x}$ of a letter $i$ acting on $\x$ and the permutation $\sigma_{\p, \x}$ of a parking function $\p$ acting on $\x$ as follows. Let $\sigma_{i, \x} \in S_m$ be such that

\[
i \cdot \x = \sigma_{i, \x} (x_0-1,\dots , x_{i-1}-1 , x_{i} + m-1, x_{i+1}-1, \dots x_{m-1}-1)
\]
That is, $\sigma_{i, \x}$ is how the coordinates are resorted when $i$ acts on $\x$.
Note that since $\x$ is in the interior of an alcove, all the coordinates of $\x$ are distinct, and furthermore, $i \cdot \x$ is also in the interior of an alcove.
The permutation of a parking function $\p = p_{m-1} \dots p_0$ on $\x$ is $$\sigma_{\p,\x} = \sigma_{p_{n-1}, p_{n-2}\dots p_0 \x}\dots \sigma_{p_1, p_0\x} \sigma_{p_0, \x}.$$ 
\end{definition}

This works with concatenation. If $\p$ and $\textbf{\textit{q}}$ are words then $\sigma_{\p, \textbf{\textit{q}}  \x}\sigma_{\textbf{\textit{q}}, \x} = \sigma_{\textbf{\textit{pq}}, \x}$. In particular if $\x$ is fixed by $\p$ then $(\sigma_{\p, \x})^\ell = \sigma_{\p^\ell, \x}$ where $\p^\ell$ is $\p$ concatenated with itself $\ell$ times. 

\begin{definition}
    Let $\x \in V^m$ for any $i \in \{ 0, \dots, m-1\}$ let 
    \[
    i \star \x = (x_0-1,\dots , x_{i-1}-1 , x_{i} + m-1, x_{i+1}-1, \dots x_{m-1}-1).
    \]
    That $i \star \x$ is the shift during the action of $i$.
\end{definition}

This is a motivating example for why we are now starting to track $\sigma_\p$. 

\begin{example} \label{exp: Parking not closer}
    Let $\w = 2100 \in PW_4^3$, then the fixed point of $\w$ is $x_\w = (-4, 0, 4)$. We consider the centers of the 3 centers of alcoves adjacent to $A_\w$ and see where $\w$ maps them. We have labeled the centers $a, b, c$ where $a = (-4, 1, 3)$, $b= (-3, -1, 4), $ and $c = (-5, 0, 5)$. We will find that $\w \cdot a = b, \w \cdot b = x_\w,$ and $\w \cdot c = x_\w$.

\begin{figure}[ht]
\begin{center}

\begin{tikzpicture}[scale=1]

\foreach \x in {0,1, 2, 3}
    \draw (\x/2, \x*\3) -- (4-\x/2,\x*\3);

\foreach \x in {1, 2, 3,4}
    \draw (\x, 0) -- (\x/2,\x*\3);

\foreach \x in {0,1, 2, 3,4}
    \draw (\x, 0) -- (4/2+\x/2,4*\3-\x*\3);

\draw[line width=.5mm] (0,0) -- (4, 0);
\draw[line width=.5mm] (0,0) -- (4/2, 4*\3);

\node[font = \small] (w) at (2, 1.35*\3) {$x_w$};

\node[font = \small] (a) at (1.5, 1.6*\3) {$a$};

\node[font = \small] (b) at (2, .6*\3) {$b$};

\node[font = \small] (c) at (2.5, 1.6*\3) {$c$};

\node[font = \small] at (0, -.2) {$\vec{0}$};

\draw [-{>[sep=-1pt]}] (a) to [bend right=45] (b);

\draw [-{>[sep=-1pt]}] (b) to  (w);

\draw [-{>[sep=-1pt]}] (w) to  (c);

\end{tikzpicture}

\caption{In this example $\w = 2100$, we consider the four points, the fixed point $x_\w = (-5, 0, 5)$ along with $a = (-4, 1, 3), b= (-3, -1, 4), $ and $c = (-5, 0, 5)$. Arrows show where each point is mapped to by the parking function $\w = 2100$. } 
\label{Figure:Large Overview}

\end{center}
\end{figure}
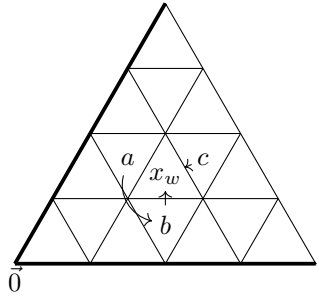

To highlight the geometry of what is happening, we will show each step of the action of $w$ on the four alcoves that contain $x_\w, a, b$ and $c$ in Figure \ref{Figure:Large Steps}. Recall that the action shifts and then resorts points. Resorting can be viewed as folding back into the Weyl Chamber. At each step we showed where we started in blue, the shift in red and the resort is the blue highlighted area of the next step.

\end{example}

\begin{figure}[ht]
\begin{center}

\begin{tikzpicture}[scale=1]

\fill
  [fill opacity=.5, fill=blue!30,draw=blue,thick] (1,2*\3) -- (3,2*\3) -- (2,0) -- cycle;

\fill
  [fill opacity=.5, fill=red!30,draw=red,thick] (1/2,\3) -- (2.5,\3) -- (1.5,-\3) -- cycle;

    \foreach \x in {1, 2,3}
    \draw (\x/2, \x*\3) -- (4-\x/2,\x*\3);

\foreach \x in {1, 2, 3}
    \draw (\x, 0) -- (\x/2,\x*\3);

\foreach \x in {1, 2, 3}
    \draw (\x, 0) -- (4/2+\x/2,4*\3-\x*\3);

\draw (3.5, \3) -- (2.5, 3*\3);

\draw[line width=.5mm] (0,0) -- (3, 0);
\draw[line width=.5mm] (0,0) -- (3/2, 3*\3);

\draw[dashed] (.5, -\3) --( 2.5 ,-\3);
\draw[dashed] (-.5, \3) --( .5 ,3*\3);

\foreach \x in {1, 2, 3}
    \draw[dashed] (-.5 + \x, -\3) -- (\x, 0);
\foreach \x in {1, 2, 3}
    \draw[dashed] (-.5 + \x, -\3) -- (\x-1, 0);
\foreach \x in { 2, 3,4}
    \draw[dashed] (-1.5 + \x/2, -\3 + \x * \3) -- (-.5 + \x/2, -\3 + \x * \3);
\foreach \x in {0,1, 2}
    \draw[dashed] (-.5 + \x/2, \3 + \x * \3) -- ( \x/2,   \x * \3);

\node[font = \small] at (0, -.2) {$\vec{0}$};

\node[circle,fill=black,scale=0.3] (w) at (2, 1.35*\3) {};

\node[font = \small] (a) at (1.5, 1.65*\3) {$a$};

\node[font = \small] (b) at (2, .65*\3) {$b$};

\node[font = \small] (c) at (2.5, 1.65*\3) {$c$};

\node[circle,fill=black,scale=0.3] (1) at (1.5, .35*\3) {};


\draw [-{>[sep=-1pt]}] (w) to node[left] {0}  (1);

\end{tikzpicture}\hspace{.3 cm}\begin{tikzpicture}[scale=1]

\fill
  [fill opacity=.5, fill=blue!30,draw=blue,thick] (1/2,\3) -- (2.5,\3) -- (2,0) -- (1, 0) -- cycle;

  \fill
  [fill opacity=.5, fill=red!30,draw=red,thick] (0,0) -- (2,0) -- (1.5,-\3) -- (.5, -\3) -- cycle;

    \foreach \x in {1, 2,3}
    \draw (\x/2, \x*\3) -- (4-\x/2,\x*\3);

\foreach \x in {1, 2, 3}
    \draw (\x, 0) -- (\x/2,\x*\3);

\foreach \x in {1, 2, 3}
    \draw (\x, 0) -- (4/2+\x/2,4*\3-\x*\3);

\draw (3.5, \3) -- (2.5, 3*\3);

\draw[line width=.5mm] (0,0) -- (3, 0);
\draw[line width=.5mm] (0,0) -- (3/2, 3*\3);

\draw[dashed] (.5, -\3) --( 2.5 ,-\3);
\draw[dashed] (-.5, \3) --( .5 ,3*\3);

\foreach \x in {1, 2, 3}
    \draw[dashed] (-.5 + \x, -\3) -- (\x, 0);
\foreach \x in {1, 2, 3}
    \draw[dashed] (-.5 + \x, -\3) -- (\x-1, 0);
\foreach \x in { 2, 3,4}
    \draw[dashed] (-1.5 + \x/2, -\3 + \x * \3) -- (-.5 + \x/2, -\3 + \x * \3);
\foreach \x in {0,1, 2}
    \draw[dashed] (-.5 + \x/2, \3 + \x * \3) -- ( \x/2,   \x * \3);

\node[font = \small] at (0, -.2) {$\vec{0}$};

\node[circle,fill=black,scale=0.3] (1) at (1.5, .35*\3) {};

\node[circle,fill=black,scale=0.3] (2) at (1, -.65*\3) {};

\draw [-{>[sep=-1pt]}] (1) to node[left] {0}   (2);


\end{tikzpicture} \hspace{.3cm}
\begin{tikzpicture}[scale=1]

  \fill
  [fill opacity=.5, fill=blue!30,draw=blue,thick] (0,0) -- (2,0) -- (1.5,\3) -- (.5, \3) -- cycle;
  
  \fill
  [fill opacity=.5, fill=red!30,draw=red,thick] (0,2*\3) -- (1,2*\3) -- (1.5,\3)  -- (-.5, \3) -- cycle;

    \foreach \x in {1, 2,3}
    \draw (\x/2, \x*\3) -- (4-\x/2,\x*\3);

\foreach \x in {1, 2, 3}
    \draw (\x, 0) -- (\x/2,\x*\3);

\foreach \x in {1, 2, 3}
    \draw (\x, 0) -- (4/2+\x/2,4*\3-\x*\3);

\draw (3.5, \3) -- (2.5, 3*\3);

\draw[line width=.5mm] (0,0) -- (3, 0);
\draw[line width=.5mm] (0,0) -- (3/2, 3*\3);

\draw[dashed] (.5, -\3) --( 2.5 ,-\3);
\draw[dashed] (-.5, \3) --( .5 ,3*\3);

\foreach \x in {1, 2, 3}
    \draw[dashed] (-.5 + \x, -\3) -- (\x, 0);
\foreach \x in {1, 2, 3}
    \draw[dashed] (-.5 + \x, -\3) -- (\x-1, 0);
\foreach \x in { 2, 3,4}
    \draw[dashed] (-1.5 + \x/2, -\3 + \x * \3) -- (-.5 + \x/2, -\3 + \x * \3);
\foreach \x in {0,1, 2}
    \draw[dashed] (-.5 + \x/2, \3 + \x * \3) -- ( \x/2,   \x * \3);

\node[font = \small] at (0, -.2) {$\vec{0}$};

\node[circle,fill=black,scale=0.3] (2) at (1, .65*\3) {};

\node[circle,fill=black,scale=0.3] (3) at (.5, 1.65*\3) {};

\draw [-{>[sep=-1pt]}] (2) to node[right] {1}  (3);


\end{tikzpicture}\hspace{.3 cm}\begin{tikzpicture}[scale=1]

  \fill
  [fill opacity=.5, fill=blue!30,draw=blue,thick] (1,2*\3) -- (1.5,\3) -- (1,0)  -- (.5, \3) -- cycle;

    \fill
  [fill opacity=.5, fill=red!30,draw=red,thick] (2,2*\3) -- (2.5,\3) -- (2,0)  -- (1.5, \3) -- cycle;

    \foreach \x in {1, 2,3}
    \draw (\x/2, \x*\3) -- (4-\x/2,\x*\3);

\foreach \x in {1, 2, 3}
    \draw (\x, 0) -- (\x/2,\x*\3);

\foreach \x in {1, 2, 3}
    \draw (\x, 0) -- (4/2+\x/2,4*\3-\x*\3);

\draw (3.5, \3) -- (2.5, 3*\3);

\draw[line width=.5mm] (0,0) -- (3, 0);
\draw[line width=.5mm] (0,0) -- (3/2, 3*\3);

\draw[dashed] (.5, -\3) --( 2.5 ,-\3);
\draw[dashed] (-.5, \3) --( .5 ,3*\3);

\foreach \x in {1, 2, 3}
    \draw[dashed] (-.5 + \x, -\3) -- (\x, 0);
\foreach \x in {1, 2, 3}
    \draw[dashed] (-.5 + \x, -\3) -- (\x-1, 0);
\foreach \x in { 2, 3,4}
    \draw[dashed] (-1.5 + \x/2, -\3 + \x * \3) -- (-.5 + \x/2, -\3 + \x * \3);
\foreach \x in {0,1, 2}
    \draw[dashed] (-.5 + \x/2, \3 + \x * \3) -- ( \x/2,   \x * \3);

\node[font = \small] at (0, -.2) {$\vec{0}$};

\node[circle,fill=black,scale=0.3] (3) at (1., 1.35*\3) {};

\node[circle,fill=black,scale=0.3] (w) at (2, 1.35*\3) {};

\draw [-{>[sep=-1pt]}] (3) to node[above] {2}  (w);


\node[font = \small] (b) at (2, .65*\3) {$b$};

\end{tikzpicture}

\caption{We track the four alcoves that contain $x_\w, a, b$ and $c$ over the action of $\w$. } 
\label{Figure:Large Steps}

\end{center}
\end{figure}
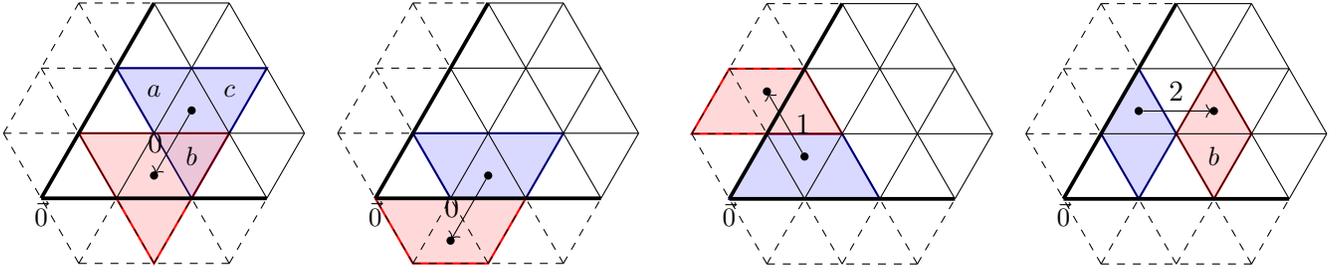

During the first step of the action $0 \star x_\w$ and $0 \star c$ are separated by $H^0_{0,1}$ after the shift and after sorting that $0 \cdot x_\w = 0 \cdot c$. See that,image $\sigma_{0, \x_\w} = id$ and $\sigma_{0, b} = s_0$, the impage of these points are getting closer to each other. Simlarly during the third step $1 \star 0 0 \cdot \x_\w$ and $1 \star 00 \cdot c $ are separated by $H_{1, 2}^0$. After the sorting, we have $100 \cdot x_\w = 100 \cdot c$. At this we have $\sigma_{100, \x_\w} = (12)(01)$ and $\sigma_{100, c} = (01)$. 

See that after applying all of $\w$ we have $\w \cdot a = b$. This also means $\| \x_\w- a \| = \| w \cdot \x  - w \cdot a \| $. To illustrate Lemma \ref{SameNorm}, see that $\sigma_{\w , \x_\w} = \sigma_{\w, a}$. It can also be seen that $\w^2 \cdot a = \x_\w$. In this section, we will show that all points outside $A_\w$ will eventually be mapped into $A_\w$. This example shows that there are some dynamics about the action that need to be studied.

\begin{lemma}\label{SameNorm}
If $\bu,\bv \in V^m$ are in the interior of alcoves $A_\bu$ and $A_\bv$ respectively, for $i \in \{0, 1, \dots m-1\}$ we have $\| \bu - \bv \| = \| i \cdot \bu - i \cdot \bv \|$ if and only if $\sigma_{i, \bu} = \sigma_{i, \bv}$. Furthermore, if $\| \bu - \bv \| = \| i \cdot \bu - i \cdot \bv \|$ then there is a linear map $T: \nR^m \to \nR^m$ such that $i|_{A_\bu} = T|_{A_\bu}$ and $i|_{A_\bv} = T|_{A_\bv}$. 
\end{lemma}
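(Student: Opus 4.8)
## Proof Proposal for Lemma \ref{SameNorm}

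The plan is to reduce everything to a single observation about how the action of a letter $i$ behaves on alcoves: by Theorem stating the action is a weak contraction (the cited Lemma 2.3 of \cite{mccammond_thomas_williams_2019}), the restriction $i|_{A_\bu}$ is a composition of the affine shift $\bu \mapsto i \star \bu$ (which is a translation, hence an isometry) followed by the sorting permutation $\sigma_{i,\bu}$ applied to the shifted coordinates (which is also an isometry of $\nR^m$ since permuting coordinates preserves the Euclidean norm). Thus $i|_{A_\bu}$ is \emph{always} an isometry on its domain; the issue is only whether $i|_{A_\bu}$ and $i|_{A_\bv}$ agree as maps, i.e.\ whether they extend to a \emph{common} linear (affine) map $T$. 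The point is that $i \star(-)$ is the same affine map on all of $V^m$, so $i|_{A_\bu}$ and $i|_{A_\bv}$ differ only by which permutation is used to resort, namely $\sigma_{i,\bu}$ versus $\sigma_{i,\bv}$.

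First I would make the translation-plus-permutation decomposition precise: write $i \cdot \x = P_{\sigma_{i,\x}} \circ \tau_i(\x)$ where $\tau_i(\x) = i \star \x = \x + m e_i - \mathbb{1}$ is a fixed translation independent of $\x$, and $P_\sigma$ denotes the coordinate-permutation linear map. Since $\tau_i$ is a translation and each $P_\sigma$ is an orthogonal map, $\| i\cdot\bu - i\cdot\bv\| = \| P_{\sigma_{i,\bu}}\tau_i(\bu) - P_{\sigma_{i,\bv}}\tau_i(\bv)\|$, whereas $\|\bu - \bv\| = \|\tau_i(\bu) - \tau_i(\bv)\|$. So I need: $\| P_{\sigma_{i,\bu}}\bu' - P_{\sigma_{i,\bv}}\bv'\| = \|\bu' - \bv'\|$ (writing $\bu' = \tau_i(\bu)$, etc.) if and only if $\sigma_{i,\bu} = \sigma_{i,\bv}$.

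For the "if" direction this is immediate: if $\sigma_{i,\bu}=\sigma_{i,\bv}=\sigma$, both points are hit by the single orthogonal map $P_\sigma$, which preserves distances; and then $T = P_\sigma \circ \tau_i$ is the desired common affine map (it is affine-linear, agrees with $i$ on $A_\bu$ and on $A_\bv$, and "linear" in the sense the paper uses for alcove maps). For the "only if" direction, suppose $\sigma_{i,\bu} = \alpha \neq \beta = \sigma_{i,\bv}$. We want to show $\|P_\alpha \bu' - P_\beta \bv'\| \neq \|\bu' - \bv'\|$. Note $\|P_\alpha \bu' - P_\beta \bv'\| = \|\bu' - P_\alpha^{-1}P_\beta \bv'\|$, so we compare $\bu'$ against $\bv'$ and against $\gamma \bv'$ where $\gamma = P_\alpha^{-1}P_\beta \neq \mathrm{id}$. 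Both $\bu'$ and $\bv'$ are images of interior points of alcoves under the shift $\tau_i$, and crucially both $P_\alpha\bu'$ and $P_\beta\bv'$ lie in $V^m$ (their coordinates are sorted increasingly), i.e.\ $\bu'$ has its coordinates in $\alpha^{-1}$-sorted order and $\bv'$ in $\beta^{-1}$-sorted order. The main obstacle — and where I expect to spend the real work — is ruling out the degenerate possibility that $\|\bu' - \bv'\| = \|\bu' - \gamma\bv'\|$ happens by coincidence even though $\gamma \neq \mathrm{id}$; here I would use that $\gamma$ is a nontrivial coordinate permutation together with the sorting constraints on $\bu',\bv'$ (their coordinate orders are prescribed and incompatible with having been resorted by two different permutations unless they lie on a common hyperplane of the braid arrangement), contradicting that $\bu,\bv$ — hence $\bu',\bv'$ — are in alcove interiors. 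Concretely: $P_\alpha \bu'$ and $P_\beta \bv'$ being sorted forces $\bu', \bv'$ to be on the \emph{same} side of exactly those hyperplanes $H^0_{j-1,j}$ permuted by $\alpha$ (resp.\ $\beta$), and a short case analysis (since $\tau_i$ only moves the $i$th coordinate up by $m$, the resorting permutation is a specific "cycle" determined by where $x_i + m - 1$ falls among the other shifted coordinates) shows the permutation is \emph{determined} by the relative order of the shifted coordinates, which is constant on each alcove; distinct $\alpha,\beta$ then forces $\tau_i(\bu), \tau_i(\bv)$ — and so $\bu,\bv$ — to have been resorted differently, from which one extracts a coordinate pair whose difference changes sign under $\gamma$, making the two norms strictly unequal. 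I would finish by assembling the "if" direction's explicit $T = P_{\sigma}\circ\tau_i$ as the promised linear map, completing the proof.
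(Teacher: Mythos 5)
Your decomposition $i\cdot\x = P_{\sigma_{i,\x}}\circ\tau_i(\x)$, the ``if'' direction, and the construction of the common map $T = P_\sigma\circ\tau_i$ all match the paper's proof. The genuine gap is in the ``only if'' direction, which is the entire content of the lemma: you correctly reduce to showing $\|\bu'-\gamma\bv'\|\neq\|\bu'-\bv'\|$ for $\gamma=P_\alpha^{-1}P_\beta\neq\mathrm{id}$, you correctly flag this as ``where the real work is,'' and you then close with ``one extracts a coordinate pair whose difference changes sign under $\gamma$, making the two norms strictly unequal.'' That final implication is precisely what must be proved and does not follow from a sign change alone; for a generic point $\bu'$ and a nontrivial permutation $\gamma$ the two norms can certainly coincide (take $\bu'$ on the perpendicular bisector of $\bv'$ and $\gamma\bv'$). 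What rules this out is the positional information that $i\cdot\bu$ and $i\cdot\bv$ are both \emph{sorted}, and you never actually deploy that fact to get an inequality.

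The paper closes the gap with a folding argument you should supply: set $\tilde\bv=\sigma_{i,\bu}(i\star\bv)$, so $\|\bu-\bv\|=\|i\cdot\bu-\tilde\bv\|$. Since $\sigma_{i,\bv}\neq\sigma_{i,\bu}$, the point $\tilde\bv$ is not sorted, so it has an adjacent descent $\tilde v_j>\tilde v_{j+1}$, while $i\cdot\bu$ satisfies the reverse strict inequality; hence the segment joining them meets $H^0_{j,j+1}$ at a point $L$, and reflecting $\tilde\bv$ across that hyperplane preserves distance to $L$ but, by the strict triangle inequality, strictly decreases distance to $i\cdot\bu$. Bubble-sorting $\tilde\bv$ to $i\cdot\bv$ by such reflections never increases the distance and strictly decreases it at least once, giving $\|i\cdot\bu-i\cdot\bv\|<\|\bu-\bv\|$. (Alternatively, your formulation can be finished in one line by the strict rearrangement inequality: with $a=i\cdot\bu$ sorted with distinct coordinates, $\|a-P_\beta\bv'\|^2-\|a-P_\alpha\bv'\|^2=2\langle a,\,P_\alpha\bv'-P_\beta\bv'\rangle$, and since $P_\beta\bv'$ is the sorted arrangement of the distinct coordinates of $\bv'$, the inner product $\langle a,\pi\bv'\rangle$ is uniquely maximized at the sorted arrangement, so the difference is strictly negative whenever $P_\alpha\bv'\neq P_\beta\bv'$, i.e.\ whenever $\alpha\neq\beta$.) Without one of these arguments the proof is not complete.
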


\begin{proof}

Its clear that if $i\cdot \bu$ and $i \cdot \bv$ are resorted in the same way then $\| \bu - \bv \| = \| i \cdot \bu - i \cdot \bv \|$. Now suppose that  $\sigma_{i, \bu} \neq \sigma_{i, \bv}$. Let $\tilde{\bv} = \sigma_{i,\bu}(v_1 -1, \dots, v_i + m -1, \dots, v_m -1)$ be $\bv$ sorted in the same way as $i \cdot \bu$. Then $\tilde{\bv}$ is not in order, so there is some $j$ such that $\tilde{v}_j >\tilde{v}_{j+1}$. We can consider the difference between $\| i \cdot \bu - \tilde{\bv} \|$ and $\| i \cdot \bu - s_j\tilde{\bv}\|$ where $s_j\tilde{\bv}$ has $v_j$ swapped with $v_{j+1}$. 
\begin{center}

\tikzset{every picture/.style={line width=0.75pt}} 

\begin{tikzpicture}[x=0.5pt,y=0.5pt,yscale=-1,xscale=1]

\draw  [dash pattern={on 0.84pt off 2.51pt}]  (196.83,208.42) -- (504.33,11.92) ;
\draw    (328.33,59.92) -- (331.33,253.92) ;
\draw    (328.33,59.92) -- (358.33,104.92) ;
\draw    (358.33,104.92) -- (388.33,149.92) ;
\draw    (388.33,149.92) -- (331.33,253.92) ;
\draw  [dash pattern={on 0.84pt off 2.51pt}]  (329.83,124.92) -- (388.33,149.92) ;

\draw (161.03,210.27) node [anchor=north west][inner sep=0.75pt]    {$x_{j} =x_{j+1}$};
\draw (313.83,39.07) node [anchor=north west][inner sep=0.75pt]    {$\tilde{\bv}$};
\draw (304.63,251.77) node [anchor=north west][inner sep=0.75pt]    {$i\cdotp \bu$};
\draw (394.33,134.07) node [anchor=north west][inner sep=0.75pt]    {$s_{j} \ \tilde{\bv}$};
\draw (312,111.4) node [anchor=north west][inner sep=0.75pt]    {$L$};

\end{tikzpicture}
\end{center}
Let $L$ be the point on the intersection of the hyperplane $x_j = x_{j+1}$ and the line segment from $i \cdot \bu$ to $\tilde{\bv}$. Then

\begin{align*}
    \| i \cdot \bu - \tilde{\bv} \| &=  \| i \cdot \bu - L \| +  \| L- \tilde{\bv} \| \\
    &= \| i \cdot \bu - L \| +  \| L- s_j \tilde{\bv} \|.
\end{align*}
By the triangle inequality, which is strict as $\tilde{\bv}$ and $ s_j \tilde{\bv}$ do not coincide 
\begin{align*}
    \| i \cdot \bu - s_j \tilde{\bv} \| &< \| i \cdot \bu- L \| +  \| L- s_j \tilde{\bv} \| \\
    &= \| i \cdot \bu - \tilde{\bv} \|. 
\end{align*}
As one resorts, the coordinates of $\tilde{\bv}$ one goes through several of these reflections. Thus $\| \bu - \bv \| > \| i \cdot \bu - i \cdot \bv \|$. 

Now, assume that $\| \bu - \bv \| = \| i \cdot \bu - i \cdot \bv \|$ we know that the action is piecewise linear on alcoves. Let $T$ and $T'$ be the linear map such that $i|_{A_\bu} = T|_{A_\bu}$ and $i|_{A_\bv} = T'|_{A_\bv}$. From the definition of the action $T(\x) = \sigma_{\bu, i}(\x - \y )$ and $T'(\x) = \sigma_{\bv,i}(\x - \y)$ where $\y = (1, \dots , 1, 1-m, 1, \dots 1 )$. As $\sigma_{i, \bu} = \sigma_{i, \bv}$ we have $T = T'$.

\end{proof}

This continues to words as well. If $\| \w \cdot \x - \w \cdot \y \| = \|\x - \y \|$ then $\sigma_{\w, \x} = \sigma_{\w, \y} $, moreover, for each $0 \leq i \leq n$ we have $\sigma_{ w_i, x^{(i)} } = \sigma_{w_i, y^{(i)}}$. Furthermore, there is a linear map $T: V^m \to T^m$ such that $\w|_{A_x} = T|_{A_\x}$ and $\w|_{A_\y} = T|_{A_y}$ where $A_\x$ and $A_\y$ are alcoves such that $\x \in A_\x$ and $\y \in A_\y$.

\begin{lemma} \label{lemma: same perm by letter}
    If $\x$ and $\y$ are from the interior of the same alcove, then for all $i \in \{ 0, \dots, m-1\}$ $\sigma_{i, \x} = \sigma_{i, \y}$. 
\end{lemma}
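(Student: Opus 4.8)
The plan is to unwind the definition of $\sigma_{i,\x}$ and observe that the only data about $\x$ that enters into this permutation is the relative order of the coordinates of the shifted vector $i \star \x$, which is determined purely by which side of each simple hyperplane $i \star \x$ lies on. First I would recall from the definition that $i \cdot \x = \sigma_{i,\x}(i \star \x)$, where $\sigma_{i,\x}$ is the unique permutation that sorts the coordinates of $i \star \x = (x_0 - 1, \dots, x_{i-1}-1, x_i + m - 1, x_{i+1}-1, \dots, x_{m-1}-1)$ into weakly increasing order. Since $\x$ lies in the interior of an alcove, all coordinates of $\x$ are distinct, and hence (using that the shift is injective on coordinates up to the single jump at position $i$) all coordinates of $i \star \x$ are distinct, so the sorting permutation is genuinely unique.

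Next I would argue that the sorting permutation of a vector with distinct coordinates depends only on the linear order type of those coordinates. Concretely, $\sigma_{i,\x}$ is the inverse of the permutation that reads off the rank of each coordinate of $i \star \x$. So it suffices to show that $i \star \x$ and $i \star \y$ have the same coordinate order type whenever $\x$ and $\y$ are in the interior of the same alcove $A$. Since $i \star$ is the same affine map $\x \mapsto \x - \y_0$ with $\y_0 = (1,\dots,1,1-m,1,\dots,1)$ applied to both, $i \star$ maps $A$ into a single translate $i \star A$, which is again contained in the interior of an alcove by Theorem (McCammond--Thomas--Williams, Lemma 2.3) quoted earlier (or simply because a translation does not change which region of the simple-hyperplane arrangement a point lies in, up to relabeling). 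All points in the interior of a given region of the arrangement $\{x_j = x_{j+1}\}$ have the same coordinate order type, because crossing from one order type to another requires passing through some hyperplane $x_j = x_{j+1}$.

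Putting these together: $i \star \x$ and $i \star \y$ lie in the interior of the same region of the braid-hyperplane arrangement, hence have identical coordinate order types, hence are sorted by the same permutation, so $\sigma_{i,\x} = \sigma_{i,\y}$. I would phrase this using the observation already made informally in the paper right after the statement that the action is a weak contraction ("acting by letter $k$ translates $\x$ and $\y$ to the same alcove, where they are on the same side of every simple hyperplane $H^0_{i-1,i}$, thus these points will be resorted in the same way"), so this lemma is essentially making that remark precise.

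The main obstacle, such as it is, is purely bookkeeping: one must be careful that the shift $i \star$ really does send the interior of an alcove into the interior of an alcove (equivalently, that no coordinate coincidence is created by the shift). This follows because within the interior of $A$ we have strict inequalities $x_0 < x_1 < \dots < x_{m-1} < x_0 + m$ (in the $V^m$ normalization the relevant bounding hyperplane is the affine one), and after subtracting $1$ from all coordinates except adding $m-1$ to $x_i$, the only pairwise comparisons that can change are those involving index $i$; since $x_i + m - 1$ versus $x_j - 1$ amounts to $x_i + m$ versus $x_j$, and $x_j - x_i \in (0, m)$ or $(-m, 0)$ strictly, equality never occurs. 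Hence the shifted coordinates remain distinct and the conclusion follows. There is no real analytic content here; the lemma is a formal consequence of the definitions and of the linearity of the action on each alcove.
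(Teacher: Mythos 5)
Your proof is correct and follows essentially the same route as the paper, which simply invokes the fact (quoted from McCammond--Thomas--Williams) that a letter acts on each alcove as a linear isometry onto an alcove, so all interior points are resorted by the same permutation --- exactly the order-type argument you spell out. One small overstatement: for a general alcove it is not true that $x_j - x_i \in (0,m)$ or $(-m,0)$; what you actually need (and have) is that interior points avoid every hyperplane $x_j - x_i = km$, so the comparison $x_i + m$ versus $x_j$ is never an equality.
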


\begin{proof}
    Recall the action of a letter restricted to an alcove is a linear isometry to the alcove it maps to. Thus all points in the alcove must be sorted the same way, thus $\sigma_{i, \x} = \sigma_{i, \y}$
\end{proof}

This extends to words. If $\x$ and $\y$ are in the same alcove, then $\sigma_{\w, \x} = \sigma_{\w, \y}$ for all words $\w \in [m]^n$. The following lemma is similar to Lemma 2.5 from \cite{mccammond_thomas_williams_2019}.

\begin{lemma} \label{lemma: cords in part}
   If $\x$ is a fixed point of $\p$ and in the interior of some alcove, then the coordinates of $\x $ can be partitioned into sets $P_0, \dots P_{d-1}$ of size $m/d$ such that for all $0 \leq i \leq d-1 $ and $a \neq b \in P_i$

\begin{align*}
    &1)\quad a - b \in d \nZ & &2) \quad a - b \notin m \nZ
\end{align*}
Furthermore, if follows that for $i \neq j$, $a \in P_i$, and $b \in P_j$ that $a - b \notin d \nZ$.
\end{lemma}

\begin{proof}

    As $\x$ is in the interior of an alcove for any coordinates $x_i, x_j$ we have $x_i - x_j \notin m \nZ$.  As $\x$ is fixed by $\p$ for each coordinate $x_j$ there is some $i$ and $k$ such that $x_j - n + mk = x_i$. This means $x_j - x_i \in d \nZ$ while $x_j - x_i \notin m \nZ$. Given a coordinate $x_j$, consider that $m/d$ cosets 
    \[
    x_j + m \nZ, \quad x_j + d + m \nZ ,\quad x_j + 2d + m \nZ,\quad \dots ,\quad x_j+ (m-d) + m \nZ.
    \]
    Each set contains exactly one coordinate of $\x$. Then each coordinate belongs to a set of $m/d$ coordinates formed in this fashion. This creates the partition $P_0, P_1, \dots, P_{d-1}$, which have both properties.

    Furthermore, if $i \neq j$ with $a \in P_i$ and $b \in P_j$ we cannot have $a - b \in d \nZ$, as each coset contains exactly one coordinate of $\x$.

\end{proof}

Consider $\x = (x_1, x_2, \dots, x_{m-1})$ is a fixed point of $\p$ and in the interior of some alcove. Let $P_0, \dots, P_{d-1}$ be a partition of coordinates of $\x$ from Lemma \ref{lemma: cords in part}. For every $j$ and $i$ such that $x_j \in P_i$ we have $x_{\sigma_{\p, \x}(j)}$ is the unique element of $P_i$ such that
    \[
    x_{\sigma_{\p, \x}(j)} - x_j + n \in m\nZ
    \]
\begin{corollary} \label{Cor: fixed cycle}  
    If $\x$ is fixed by $\p$ then the permutation $\sigma_{\p, \x}$ is $d$ disjoint cycles of size $m/d$.
\end{corollary}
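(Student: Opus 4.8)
The plan is to deduce the Corollary directly from Lemma~\ref{lemma: cords in part} together with the displayed observation immediately preceding the statement, namely that if $x_j\in P_i$ then $x_{\sigma_{\p,\x}(j)}$ is the unique element of $P_i$ with $x_{\sigma_{\p,\x}(j)}-x_j+n\in m\nZ$. First I would note that this observation says precisely that $\sigma_{\p,\x}$ preserves the partition of the coordinate index set induced by $P_0,\dots,P_{d-1}$: it sends the index of a coordinate in $P_i$ to the index of another coordinate in $P_i$. Hence $\sigma_{\p,\x}$ is the product of its restrictions $\tau_0,\dots,\tau_{d-1}$ to these $d$ blocks, each of which has size $m/d$, and it remains only to show that every $\tau_i$ is a single $(m/d)$-cycle.

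Next I would pass to residues modulo $m$. Fix a block $P_i$. Since $\x$ lies in the interior of an alcove, the $m/d$ coordinates in $P_i$ are pairwise incongruent modulo $m$; by Lemma~\ref{lemma: cords in part} they are all congruent to a common value $c$ modulo $d$. Therefore their residues modulo $m$ are exactly the $m/d$ elements of the coset $c+\{0,d,2d,\dots,m-d\}$ in $\nZ/m\nZ$, and I may parametrize them as $c+dk$ with $k\in\nZ/(m/d)\nZ$. The defining relation $x_{\sigma_{\p,\x}(j)}-x_j+n\in m\nZ$ now says that, read on this coset, $\tau_i$ is translation by $-n$ modulo $m$. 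Writing $m=dm'$ and $n=dn'$ with $\gcd(m',n')=1$ (valid because $d=\gcd(m,n)$), translation by $-n$ on the coset becomes $k\mapsto k-n'$ in $\nZ/m'\nZ$ on the parameter, and this is a single $m'$-cycle precisely because $\gcd(n',m')=1$. Thus each $\tau_i$ is an $(m/d)$-cycle, and $\sigma_{\p,\x}$ is a product of $d$ disjoint $(m/d)$-cycles, as claimed.

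I do not anticipate any real obstacle: the only care needed is the bookkeeping between a coordinate and its index, and the elementary group-theoretic fact that translation by an element of order $m/d$ in $\nZ/m\nZ$ acts as a single $(m/d)$-cycle on each of its cosets. If one wanted a fully self-contained argument one would also reprove the displayed observation, which comes from unwinding the definition of $\sigma_{\p,\x}$ and using that $\x$ is fixed by $\p$ so that each coordinate, after subtracting $n$ and adding a suitable multiple of $m$, lands on another coordinate in the same part; but since that statement is already recorded above, I would simply quote it.
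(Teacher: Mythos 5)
Your proof is correct and follows essentially the same route as the paper's: both decompose the coordinates into the blocks $P_0,\dots,P_{d-1}$ of Lemma~\ref{lemma: cords in part}, use the observation that $\sigma_{\p,\x}$ preserves each block and acts there by subtracting $n$ modulo $m$, and conclude each block is a single $(m/d)$-cycle. The only difference is that you spell out the coprimality step ($\gcd(m/d,n/d)=1$ makes translation by $-n$ a single cycle on each coset of $d\nZ$ in $\nZ/m\nZ$), which the paper leaves implicit in the phrase ``applying the same logic several times.''
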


\begin{proof}
    From the discussion above, if $x_j \in P_i$ then $x_{\sigma_{\p, \x}(j)} \in P_i$, Applying the same logic several times we arrive at 
    \[
    P_i = \{ x_j, x_{\sigma_{\p, \x}(j)}, x_{\sigma_{\p, \x}\sigma_{\p, \x}(j)} = x_{\sigma_{\p^2, \x}(j)}, \dots x_{\sigma_{\p^{m/d -1}, \x }(j)}\}.
    \]

    Thus $\sigma_{\p, \x}$ permutes the coordinates in $P_i$ in a long cycle. There are $d$ parts $P_i$, thus $\sigma_{\p, \x}$ is $d$ disjoint cycles of size $m/d$.
\end{proof}

\begin{lemma}
    If $\| \p \bu - \p \bv \| = \| \bu - \bv\|$ then for every $\epsilon > 0$ there exists $\bu'$ and $\bv'$ such that $\bu' \in B_\epsilon(\bu)$, $\bv' \in B_\epsilon(\bv)$ such that $\bu'$ and $\bv'$ are in the interior of alcoves and $\sigma_{\p, \bu'} = \sigma_{\p, \bv'}$. Furthermore, there are alcoves $A_\bu$ and $A_\bv$ such that $\bu \in A_\bu$ and $\bv \in A_\bv$ and a linear map $T$ such that $\p|_{A_\bu} = T|_{A_\bu}$ and $\p|_{A_\bv} = T|_{A_\bv}$.
\end{lemma}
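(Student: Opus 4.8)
The plan is to reduce this statement about arbitrary points $\bu,\bv$ to the already-established case where the points lie in the interiors of alcoves (Lemma \ref{SameNorm} and its "extends to words" remark). The key observation is that the action of $\p$ is piecewise linear: $V^m$ is subdivided into finitely many alcoves, and on each alcove $\p$ agrees with a linear map. Since $\p$ is a weak contraction (Theorem from \cite{mccammond_thomas_williams_2019}, Lemma 2.3), each of these linear pieces is a weak contraction, hence the distance $\|\p\x - \p\y\|$ as a function of $(\x,\y)$ is continuous, piecewise linear, and bounded above by $\|\x-\y\|$. The set where equality $\|\p\x-\p\y\| = \|\x-\y\|$ holds is therefore a closed set.

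First I would fix $\epsilon > 0$. Since the alcoves with nonempty interior cover $V^m$ up to a measure-zero set (the union of hyperplanes in the arrangement), and since a point on a hyperplane is a limit of interior points of adjacent alcoves, I can choose $\bu' \in B_\epsilon(\bu)$ and $\bv' \in B_\epsilon(\bv)$ in the interiors of some alcoves $A_\bu$ and $A_\bv$. The subtlety is that a generic such choice need not preserve the equality $\|\p\bu' - \p\bv'\| = \|\bu' - \bv'\|$; we only know the equality holds at the limit point $(\bu,\bv)$. To handle this I would use the piecewise-linear structure more carefully: among the finitely many pairs of alcoves $(A,B)$ whose closures contain $\bu$ and $\bv$ respectively, I claim at least one pair has the property that $\p|_A$ and $\p|_B$ are restrictions of a common linear map $T$ with $\|T\bu - T\bv\| = \|\bu-\bv\|$. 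Indeed, pick a sequence $(\bu_k,\bv_k) \to (\bu,\bv)$ with each $\bu_k,\bv_k$ in alcove interiors; by finiteness, infinitely many of them lie in a fixed pair of alcoves $(A_\bu, A_\bv)$, and on that pair $\p$ is linear, say $\p|_{A_\bu} = T$ and $\p|_{A_\bv} = T'$; passing to the limit in $\|\p\bu_k - \p\bv_k\| \le \|\bu_k - \bv_k\|$ and using that $\bu \in \overline{A_\bu}$, $\bv \in \overline{A_\bv}$ together with the continuity of $T, T'$ gives $\|T\bu - T'\bv\| = \|\bu - \bv\|$. Shrinking $\epsilon$ if necessary so that $B_\epsilon(\bu) \cap A_\bu$ and $B_\epsilon(\bv) \cap A_\bv$ are nonempty, any $\bu' \in B_\epsilon(\bu)\cap \operatorname{int}(A_\bu)$ and $\bv' \in B_\epsilon(\bv) \cap \operatorname{int}(A_\bv)$ then satisfy $\|\p\bu' - \p\bv'\| = \|T\bu' - T'\bv'\|$; combining with the weak-contraction inequality and the equality at the limit (a short convexity/linearity argument shows the distance function restricted to the linear pieces attains its maximum value on the whole pair if it does at one point) forces $\|\p\bu'-\p\bv'\| = \|\bu'-\bv'\|$, and then Lemma \ref{SameNorm} applied stepwise gives $\sigma_{\p,\bu'} = \sigma_{\p,\bv'}$ as well as $T = T'$.

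Once $T = T'$ is obtained on the pair $(A_\bu, A_\bv)$, the "furthermore" clause is immediate: set $A_\bu$ and $A_\bv$ to be exactly these alcoves, and $T$ the common linear map. I expect the main obstacle to be the interchange of limits: a priori the equality of norms is not an open condition, so I cannot simply perturb $\bu$ and $\bv$ freely. The resolution is the finiteness of the alcove subdivision combined with the fact that on each linear piece, equality of the norm at one interior point propagates to the whole piece — this last fact needs the observation that $\x \mapsto \|\p\x - \p\y\|^2 - \|\x-\y\|^2$ is, on a fixed pair of alcoves, the difference of a quadratic function and $\|\cdot\|^2$ composed with an affine map whose linear part is an isometry precisely when $\sigma_{\p,\bu'}=\sigma_{\p,\bv'}$, so it is identically zero on the pair as soon as it vanishes somewhere. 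I would state and prove this propagation fact as the technical heart, then assemble the rest as above.
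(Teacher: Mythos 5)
Your reduction to a fixed pair of alcoves via a convergent sequence, followed by the ``propagation'' claim, has a genuine gap: the propagation claim is false, and the pair of alcoves your sequence selects can be the wrong one. On a fixed pair $(A_{\bu},A_{\bv})$ the function $(\x,\y)\mapsto\|\x-\y\|^2-\|T\x-T'\y\|^2$ is nonnegative and \emph{bi-affine} (the pure quadratic terms cancel because the linear parts of $T$ and $T'$ are orthogonal), and a nonnegative bi-affine function on a product of convex bodies can vanish on a proper face without vanishing identically --- and this really happens for the sorting action. Concretely, if at some step of $\p$ the image of $\bu$ lands exactly on a sorting hyperplane $H_{i,i+1}^0$ while the image of $\bv$ does not, then of the two alcoves at $\bu$ separated by the corresponding wall, one is folded toward $\bv$ at that step, so all of its interior points strictly lose distance, while equality still holds at the crease point $\bu$ itself. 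Smallest instance: $m=2$, where $V^2$ is a ray with coordinate $t=x_1$ and the letter $0$ acts by $t\mapsto|t-1|$; take $\bu=1$, $\bv=3$. The distance $2$ is preserved, but for every $\bu'$ with $0<\bu'<1$ one has $|0\cdot\bu'-0\cdot\bv|=1+\bu'<3-\bu'=|\bu'-\bv|$. A sequence $\bu_k\uparrow 1$ selects exactly this bad alcove, on which $T(t)=1-t\neq t-1=T'(t)$ and your desired equality fails on the entire interior, even though $\|T\bu-T'\bv\|=\|\bu-\bv\|$ at the corner. So the lemma is really the assertion that \emph{some} admissible pair of alcoves is good, and your argument supplies no mechanism for finding it.

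The paper's proof supplies that mechanism. Since $\p$ is $1$-Lipschitz and preserves the distance between the endpoints, it maps the whole segment $L=[\bu,\bv]$ isometrically onto a segment; one then slides $\bu,\bv$ slightly inward along $L$ to points $\bu'',\bv''$ lying only on hyperplanes containing all of $L$, and perturbs both by a single common generic vector $\w$ into alcove interiors. If the resulting segment $L'$ were not mapped isometrically, then at some step the images of $\bu''$ and $\bv''$ would be strictly separated by some $H_{i,i+1}^0$, contradicting the isometry on $L$. That segment argument (or some substitute that certifies a good pair of alcoves) is the missing ingredient; the remaining parts of your outline --- continuity, finiteness of the alcove subdivision, and the final appeal to Lemma \ref{SameNorm} to get $\sigma_{\p,\bu'}=\sigma_{\p,\bv'}$ and $T=T'$ --- are fine once a good pair is in hand.
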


\begin{proof}
    If $\bu$ and $\bv$ are already in the interior of alcoves, then by Lemma 5.2 
    we can take $\bu' = \bu$ and $\bv' = \bv$ and have $\sigma_{\p, \bu'} = \sigma_{\p, \bv'}$. If $\bu$ or $\bv$ is contained in some hyperplane $H \in \cH$, we consider the line segment $L$ connecting $\bu$ and $\bv$, as $\| \p \bu - \p \bv \| = \| \bu - \bv\|$ we know, $L$ is mapped isometrically by $\p$ to its image, which must also be a line segment. 
    For any $\epsilon > 0$ one can pick $\bu'' \in B_{\epsilon/2}(\bu) \cap L$ and $\bv'' \in B_{\epsilon/2}(\bv) \cap L$ such that if $\bu''$ or $\bv''$ is contained in hyperplane $H_{i, j}^k$ then $L \subset H_{i, j}^k$ and each alcove containing $\bu''$ also contains $\bu$, similarly, each alcove containing $\bv''$ also contains $\bv$. 
    Let $\w \in B_{\epsilon/2}(0)$ be any point such that $\bu' = \bu'' + \w$ and $\bv' =\bv'' + \w$ are in the interiors of alcoves $A_\bu$ and $A_\bv$ such that $\bu, \bu', \bu'' \in A_\bu$ and $\bv, \bv', \bv'' \in A_\bv$.

    Let $L'$ be the line segment from $\bu'$ to $\bv'$.We claim that $\p$ maps $L'$ 
    to its image isometrically. Indeed, suppose that $\| \p \bu' - \p \bv' \| < \| \bu' - \bv' \|$. Let $k$ be such that $\|  p_k \dots p_0 \bu' - p_k \dots p_0 \bv' \|  = \|\bu' - \bv'  \|$ and $\|  p_{k+1} p_k \dots p_0 \bu' - p_{k+1}p_k \dots p_0 \bv' \|  < \|\bu' - \bv'  \|$. During the action of $p_{k+1}$ before sorting, the images of $\bu'$ and $\bv'$ must be on opposite sides of a hyperplane $H_{i, i+1}^0 =\{x_i = x_{i+1}\}$ for some $i$. Thus the alcoves containing the images of $\bu'$ and $\bv'$ are separated by $H_{i, i+1}^0$. The images of $\bu''$ and $\bv''$ cannot lie on $H_{i, i+1}^0$, as they are in the interior of alcoves. Thus the images of $\bu''$ and $\bv''$ are separated by $H_{i, i+1}^0$. Then $p_{k+1}$ necessarily sort the images of $\bu''$ and $\bv''$ in different ways, thus $\|\p \cdot \bu'' - \p \cdot \bv''\| < \| \bu'' - \bv'' \|$. But $\bu'', \bv'' \in L$ so this contradicts that $L$ is mapped isometrically.

    Thus $L'$ is mapped isometrically, which means that $\bu' \in B_\epsilon(\bu)$, $\bv' \in B_\epsilon(\bv)$ such that $\bu'$ and $\bv'$ are in the interior of alcoves, and $\sigma_{\p, \bu'} = \sigma_{\p, \bv'}$ by Lemma 5.2.

    Furthermore, by lemma 5.2 there is a linear map $T$ such that $\p|_{A_\bu} = T|_{A_\bu}$ and $\p|_{A_\bv} = T|_{A_\bv}$ as $\bu'$ and $\bv'$ are in the interior of alcoves with  $\| \p \bu' - \p \bv' \| = \| \bu' - \bv' \|$.
\end{proof}

From now on assume that $m$ and $n$ are coprime, $\p$ is an $(m, n)$-parking function, the unique fixed point of $\p$ is $\x_p$, and $A_p$ is the alcove containing $\x_p.$ Note that according to Lemma 2.9 from \cite{gorsky_mazin_vazirani_2014} $x_p$ must be the centroid of $A_p$.

\begin{lemma}\label{Periodic}
Given an $(m, n)$-parking function $\p$, for all $\x \in V^m$ there is some $k \in \nZ_{\geq 0}$ such that $\p^{k+m}\x = \p^k\x$. That is $\{\p^k\x\}_k$ will eventually become fixed or periodic, with period dividing $m$.  
\end{lemma}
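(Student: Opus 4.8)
\textbf{Proof proposal for Lemma \ref{Periodic}.}

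The plan is to show that the orbit $\{\p^k\x\}_k$ is bounded, and then combine boundedness with the weak-contraction property of the action to force eventual periodicity with period dividing $m$. First I would establish boundedness: acting by a single letter translates by a vector of fixed length (the $\y = (1,\dots,1-m,\dots,1)$ of Lemma \ref{SameNorm}) and then resorts, so the action of $\p$ moves any point by at most a fixed amount $R$ depending only on $\p$. Since $\p$ is a weak contraction (Theorem stated in \S\ref{SubSec: Alcoves and the action of a Parking Function}), $\|\p^k\x - \x_p\| = \|\p^k\x - \p^k\x_p\| \le \|\x - \x_p\|$ for all $k$, so the entire orbit lives in the closed ball $\overline{B_{\|\x-\x_p\|}(\x_p)}$. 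This ball meets only finitely many alcoves of $\cH$, hence only finitely many alcove-centroids; but more to the point, it meets only finitely many points of the discrete-looking structure we actually care about.

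The cleaner route is to reduce to centroids. Acting by $\p$ sends centroids of alcoves to centroids of alcoves (each letter does, since $i \star (\cdot)$ shifts integer-translated coordinates by integers and resorting preserves the centroid condition of Definition \ref{def: Centriod}; this is implicit in Lemma 3.6's proof that $\phi(\Delta)$ is always a centroid). So if $\x$ is itself a centroid, the orbit lies in the finite set of centroids inside $\overline{B_{\|\x-\x_p\|}(\x_p)}$, hence is eventually periodic; say $\p^{k+p}\x = \p^k\x$ for some minimal period $p$. Then $\sigma_{\p^p,\p^k\x} = (\sigma_{\p,\p^k\x})^p = \mathrm{id}$ by the concatenation property of permutations of a fixed point, and since $\sigma_{\p,\p^k\x}$ is a permutation in $S_m$ whose order divides... here is where I would invoke the alcove picture: the displacement of $\p^{k+1}\x$ from $\p^k\x$ together with the sorting permutation determines the return, and the order of the relevant permutation divides $m$. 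Actually the simplest argument: $\p^k\x$ is a fixed point of $\p^p$, so by Corollary \ref{Cor: fixed cycle} applied to the parking-function power... but $\p^p$ is a word in $[m]^{np}$, and one should check it still behaves like a parking function with fixed point $\p^k\x$. Since $\p$ fixes $\x_p$, $\p^p$ fixes $\x_p$, and $\p^p$ is an $(m,np)$-parking function; its fixed point is unique (as $\gcd(m,np)=\gcd(m,p)$ need not be $1$, so I would instead argue directly). I would argue: by Corollary \ref{Cor: fixed cycle}, $\sigma_{\p,\x_p}$ is a product of $d=\gcd(m,n)=1$ cycles, hence an $m$-cycle, so it has order exactly $m$; a parallel computation at $\p^k\x$ shows $\sigma_{\p,\p^k\x}$ has order dividing $m$, forcing $p \mid m$.

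For a general $\x$ not a centroid, I would use Lemma 5.6 (the displacement/isometry lemma) together with the weak-contraction bound: either $\|\p^{k+1}\x - \x_p\| < \|\p^k\x - \x_p\|$ strictly for infinitely many $k$ — impossible since the norms are a nonincreasing sequence bounded below and the set of attainable values is discrete (differences of distances between points in a fixed bounded region whose coordinates lie in a lattice coset) — or eventually the norms stabilize, $\|\p^k\x - \x_p\| = \|\p^{k+1}\x - \x_p\|$ for all large $k$. Once norms stabilize, Lemma 5.6 gives a linear map $T$ with $\p|_{A} = T|_{A}$ on the alcove $A$ containing $\p^k\x$, and $T$ fixes $\x_p$ and is an isometry, so $\p^k\x$ moves on a sphere about $\x_p$ under iteration of an isometry of a finite-dimensional space with the centroid $\x_p$ as the image-center; combined with the fact that $\p^k\x$ visits only finitely many alcoves (bounded orbit, finitely many alcoves in a ball), the orbit is eventually periodic, and the period divides $m$ by the same cycle-structure argument. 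The main obstacle I anticipate is the last step — rigorously extracting "period divides $m$" rather than merely "eventually periodic" — which requires carefully relating the sorting permutation $\sigma_{\p,\p^k\x}$ of the eventual periodic point to the $d$-disjoint-cycles structure of Corollary \ref{Cor: fixed cycle}, noting $d=1$ here so one gets an $m$-cycle whose powers give all periods dividing $m$.
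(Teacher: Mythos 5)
Your proposal is correct and its overall skeleton matches the paper's: weak contraction bounds the orbit, finiteness forces eventual periodicity, and then the stabilized distance to $\x_p$ together with Lemma \ref{SameNorm} and Corollary \ref{Cor: fixed cycle} (with $d=1$, so $\sigma_{\p,\x_p}$ is an $m$-cycle) upgrades ``eventually periodic'' to ``period dividing $m$.'' The one genuinely different ingredient is how you obtain eventual periodicity: you note that every coordinate of $\p^k\x$ lies in $\bigcup_j (x_j+\nZ)$, so the bounded orbit sits inside a discrete set and is therefore finite. This works uniformly for every $\x\in V^m$ and sidesteps the paper's route, which pigeonholes the orbit into the finitely many alcoves it meets and then appeals to the finite symmetry group of an alcove mapped isometrically onto itself; your version also makes the centroid reduction and the separate interior/boundary case distinction unnecessary. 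Two cautions. First, your suggestion of ``a parallel computation at $\p^k\x$'' of Corollary \ref{Cor: fixed cycle} does not work as stated: that corollary (via Lemma \ref{lemma: cords in part}) is about fixed points of $\p$, and $\p^k\x$ is not one. The correct link --- which you also gesture at, and which the paper uses --- is that once $\|\p^{k+j}\x-\x_p\|$ is constant in $j$, Lemma \ref{SameNorm} forces $\sigma_{\p,\p^{k+j}\x}=\sigma_{\p,\x_p}$ for every $j$, so $\sigma_{\p^m,\p^k\x}=(\sigma_{\p,\x_p})^m=\mathrm{id}$ and $\p^m$ restricts to the alcove of $\p^k\x$ as the same linear map as on $A_p$, namely the identity. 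Second, the claim that $\p$ sends centroids to centroids is true but, given your discreteness observation, the whole centroid detour can be deleted.
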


\begin{proof}
Consider an $\x \in V^m$. Acting by a letter or word is a weak contraction, thus 
\[
\|\p \cdot \x - \x_p\| \leq \| \x - \x_p \|
\]
for all $\x \in V^m$. First consider $\x$ in the interior of some alcove. If we consider the set $\{\p^k \cdot \x \}_k$, because the action is a weak contraction, this set can only intersect finitely many alcoves in $V^m$ as the distance to $\x_p$ is bounded. Thus, for some $k < j$ point $\x^k, \x^j$ are in the same alcove of $V^m$, say $\p^k\x, \p^j\x \in A$. We know $\p$ isometrically maps alcoves to alcoves, thus $\p^{(j-k)}: A \to A$. It is well known that alcoves have dihedral symmetry $D_{2m}$, so $\p^{(j-k)}: A \to A$ can only be one of $2m$ possible maps. Thus $\p^{(j-k)2m + k}\x = \p^k \x$ and repeated action of $\p$ must become periodic.

When the action becomes periodic, the distance to $\x_p$ will stop decreasing and $\| \p^k \x - \x_p\| = \| \p^{k+j}\x - \x_p \|$ for all $j \geq 0$. By \cref{SameNorm} $\sigma_{\p, \x_p} = \sigma_{\p, \p^{k+j}\x}$ and by Corollary \ref{Cor: fixed cycle} $\sigma_{\p, \x_\p}$ a cycle of length $m$. Thus $\sigma_{\p, \p^{k+j}\x} = \sigma_{\p, \p^{k+j}\x_\p}$ is the same cycle of length $m$ for all $j$. Then 
$$\sigma_{\p, x_p}^m = \sigma_{\p^m, x_p} = id = \sigma_{\p, \p^{k+m}\x_\p}$$

The linear map $\p^m|{A_{\x_p}}$ is the identity map, thus $\p^m|{\p^jA_{\x}}$ must also be the identity map by Lemma 5.6. Which implies that $\p^{j + m} \cdot \x = \p^j \cdot \x$.

Now consider $\x$ on the boundary of some alcove $A'$. Let $\x'$ be a point in the interior of $A'$. Then for any word $\w$ we have $\|\w \x - \w \x' \| = \| \x - \x' \|$, as each letter maps alcoves to alcoves isometrically. By the above, there is some $j$ such that $\p^m|_{\p^jA'}: p^j A' \to p^j A'$ is the identity map and $\p|_{\p^j A'}$ has period $m$.

\end{proof}

Now we can classify all the periodic points of a given parking function. 

\begin{lemma}\label{lemma: On Hi i+1}
If $\x_p$ is fixed by a parking function $\p=p_{n-1}\dots p_0 $, then there is some $0 < k< n$ and $0 \leq i < m$ where $p_{k-1}\dots p_0 \x_p = \x_p^{(k)}$ such that $|x^{(k)}_{p,i} - x^{(k)}_{p,i+1}| = 1$. Which means if $A_p$ is the alcove containing $\x_p$ then $p^k\dots p_0A_p$ has a facet contained in the hyperplane $H_{i, i+1}^0 = \{x_i = x_{i+1} \}$ for some $i$.

\end{lemma}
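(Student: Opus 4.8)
The plan is to produce a single step of the action at which a genuine re-sort occurs, and then to read the ``$\pm 1$'' off the centroid structure. First, by Theorem~1.1 there is an $(m,n)$-invariant set $\Delta$ with $\p\cdot\Delta=\Delta+n$; since $\gcd(m,n)=1$ the fixed point of $\p$ is unique, so $\x_p=\phi(\Delta)$ and, more generally, $\x^{(k)}_p=\phi(\Delta^{(k)})$ is the centroid of $A^{(k)}_p:=p_{k-1}\cdots p_0A_p$. From the formula for $\phi$, the coordinates of $\x^{(k)}_p$ are the re-centred, sorted $m$-generators of $\Delta^{(k)}$, so all of their pairwise \emph{differences} are integers; in particular $|x^{(k)}_{p,i}-x^{(k)}_{p,i+1}|=1$ is meaningful, and unwinding the hyperplane description of an alcove near its centroid shows it is equivalent to $A^{(k)}_p$ having a facet on $H^0_{i,i+1}$. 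We may assume $m\ge 2$, as $m=1$ is vacuous.

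The key reduction is that a re-sort must occur somewhere: there is an index $k$ with $\sigma_{p_k,\x^{(k)}_p}\neq\mathrm{id}$. Otherwise $\sigma_{\p,\x_p}=\sigma_{p_{n-1},\x^{(n-1)}_p}\cdots\sigma_{p_0,\x_p}=\mathrm{id}$, contradicting Corollary~\ref{Cor: fixed cycle}, which (as $\gcd(m,n)=1$) makes $\sigma_{\p,\x_p}$ a single $m$-cycle, hence nontrivial. Write $i:=p_k$. A re-sort at step $k$ means precisely that after the shift $i\star$ the $i$-th coordinate has jumped past its neighbour, i.e.
\[
0<x^{(k)}_{p,i+1}-x^{(k)}_{p,i}<m,
\]
and then $\sigma_{p_k,\x^{(k)}_p}$ is the cycle $(i,i+1,\dots,t)$ sweeping coordinate $i$ past the whole block of coordinates within distance $m$ of it. This can be arranged with $0<k<n$: a re-sort cannot occur only at step $0$, since if $\sigma_{p_0,\x_p}$ is nontrivial then $\x^{(1)}_p$ has every consecutive gap strictly less than $m$ (coordinate $0$ has been brought within $m$ of everything it passed, and it passed exactly the coordinates whose gap to it was $<m$), so every letter re-sorts on $\x^{(1)}_p$ and there is a re-sort at step $1$.

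It remains to locate, among the centroids $\x^{(1)}_p,\dots,\x^{(n-1)}_p$, one with a consecutive gap equal to $1$. Since all gaps involved are positive integers, the gap $x^{(k)}_{p,i+1}-x^{(k)}_{p,i}$ at the re-sort step lies in $\{1,\dots,m-1\}$; if it equals $1$ we are done. Otherwise one runs an accounting argument along the closed chain of gap vectors $g(\x^{(0)}_p),\dots,g(\x^{(n)}_p)$: a step without re-sort moves a block of length $m$ from one gap into an adjacent one; a re-sort step with cycle $(i,\dots,t)$ merges the gaps above position $i$ and splits off new gaps $m-(x^{(k)}_{p,t}-x^{(k)}_{p,i})$ and $(x^{(k)}_{p,t+1}-x^{(k)}_{p,i})-m$, leaving the rest unchanged. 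Because the chain returns to its start after $n$ steps while $\sigma_{\p,\x_p}$ is a full $m$-cycle, the ``excess'' ($\ge 2$) gaps created by the re-sorts cannot all persist through the closed chain, so a gap of length $1$ must appear at some intermediate $\x^{(k)}_p$. Translating back through the first paragraph then yields that $A^{(k)}_p$ has a facet on $H^0_{i,i+1}$.

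The main obstacle is exactly this last accounting step. A re-sort at step $k$ only says that the shifted alcove has been pushed onto the far side of $H^0_{p_k,p_k+1}$, not that it abuts that hyperplane; so the argument must follow the entire chain of shifts and folding reflections and exploit the integrality of the coordinate differences of the $\x^{(k)}_p$ to force a unit gap strictly between the endpoints. By contrast, the reduction to ``a re-sort occurs'' via Corollary~\ref{Cor: fixed cycle} and the translation between the coordinate condition and the facet condition are routine.
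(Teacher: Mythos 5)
Your reduction to ``some letter must re-sort'' (via Corollary~\ref{Cor: fixed cycle}) only yields a step $k$ and an index $i=p_k$ with $0<x^{(k)}_{p,i+1}-x^{(k)}_{p,i}<m$; the entire content of the lemma is that this gap can be forced to equal $1$ exactly, and that is precisely the step you leave as an unexecuted ``accounting argument.'' As written, the claim that the ``excess ($\ge 2$) gaps created by the re-sorts cannot all persist through the closed chain'' is not proved, and it is not clear how to make it work: a re-sort step can create and destroy several gaps at once, and nothing in the closed-chain bookkeeping you describe obviously rules out every intermediate gap being $\ge 2$. Since you yourself flag this as the main obstacle, the proposal has a genuine gap at the crux of the lemma.

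The paper's proof avoids gap bookkeeping entirely by exploiting the residues of the coordinates. Since $\x_p$ is a centroid, its coordinates have distinct residues mod $m$; pick the two coordinates $x_i\equiv 0$ and $x_j\equiv 1\pmod m$ with, say, $x_i<x_j$. Because $\sigma_{\p,\x_p}$ is a single $m$-cycle, the relative order of (the images of) these two tracked values must flip at some letter of some power of $\p$; at the first such letter, the smaller value is the one that gets $+m$, so before the shift it satisfies $\hat{x}_{\hat i}<\hat{x}_{\hat j}<\hat{x}_{\hat i}+m$ together with $\hat{x}_{\hat j}\equiv \hat{x}_{\hat i}+1\pmod m$, which forces $\hat{x}_{\hat j}=\hat{x}_{\hat i}+1$ and hence adjacency $\hat j=\hat i+1$. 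This is the concrete mechanism that produces the unit gap, and it is absent from your argument. If you want to salvage your route, you would need to replace the accounting sketch with something of this form (or a complete proof of the persistence claim); the congruence trick is the cleanest way to do it.
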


\begin{proof}
By Corollary \ref{Cor: fixed cycle}, we know that $\sigma_{\p, \x_p}$ is a cycle of length $m$, and all coordinates of $\x_p$ have distinct remainders $\mod m$. Let $x_i = 0 \mod m$ and $x_j = 1 \mod m$. Without loss of generality, say that $x_i < x_j$ and thus $i < j$. As $\sigma_{p, \x_p}$ is a cycle, there is a minimum $\ell$ such that $(\sigma_{\p, \x_p})^\ell(i) > (\sigma_{\p, \x_p})^\ell(j) $. Then $(\sigma_{\p, \x_p})^{\ell-1}(i) < (\sigma_{\p, \x_p})^{\ell-1}(j)$ and there is a minimal $0 \leq k \leq n-1$ such that 

\[
\sigma_{p_{k-1} \dots p_0 p^{\ell -1}, \x}(i) > \sigma_{p_{k-1} \dots p_0 p^{\ell -1}, \x}(j).
\]

Set $\hat{i} = \sigma_{p_{k-2} \dots p_0 p^{\ell}} (i)$, $\hat{j} = \sigma_{p_{k-2} \dots p_0 p^\ell} (j) $, and $\hat{x} = p_{k-2} \dots p_0 p^\ell \cdot \x$. See that $\hat{i} < \hat{j}$ and $\hat{x}_{\hat{i}} <\hat{x}_{\hat{j}}$ and $\sigma_{p_{k-1}, \hat{x}}(\hat{i}) > \sigma_{p_{k-1}, \hat{x}}(\hat{j})$. 
As $\hat{\x}_{\hat{i}} - \hat{\x}_{\hat{j}}$ changes sign from negative to positive, it must be that $p_{k-1} = \hat{i}$. Acting by a letter does not change the remainders $\mod m$, thus $\hat{x}_{\hat{i}} + 1 \equiv \hat{x}_{\hat{j}} \mod m$. Acting adds $m$ to a coordinate, thus $\hat{x}_{\hat{i}} + m > \hat{x}_{\hat{j}}$, that means that $\hat{x}_{\hat{j}} = \hat{x}_{\hat{i}} + 1$. Since all the coordinates are integer, distinct, and ordered, we must have $\hat{j} = \hat{i} + 1$. This means that the alcove that contains $\hat{x}$ has a facet contained in $H_{i, i+1}^0$. 

\end{proof}

\begin{lemma} \label{lemma: x to fixed alcove}
Given $\p$ a parking function, let $\x_p$ be its fixed point in $V^m$, and $A_p$ be the alcove containing $x_p$. The set of periodic points of $\p$ is exactly $A_p \setminus \x_p$ all points of $A_p \setminus \x_p$ have period $m$.
\end{lemma}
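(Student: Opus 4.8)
The plan is to prove the two claims—that $A_p\setminus\x_p$ consists of periodic points of period exactly $m$, and that these are the \emph{only} periodic points—separately, using the geometric machinery assembled in \cref{SameNorm}, \cref{Periodic}, and \cref{lemma: On Hi i+1}. For the first inclusion, I would fix $\y\in A_p\setminus\x_p$ (working first with $\y$ in the interior of $A_p$, then extending to the boundary exactly as at the end of the proof of \cref{Periodic}). Since $\x_p,\y$ lie in the same alcove, \cref{lemma: same perm by letter} gives $\sigma_{\p,\y}=\sigma_{\p,\x_p}$, and by \cref{SameNorm} (the word version) there is a single linear map $T$ with $\p|_{A_p}=T|_{A_p}$ fixing $\x_p$; hence $\p$ acts on $A_p$ as an isometry of $A_p$ with a fixed point, i.e.\ as an element of the dihedral symmetry group of $A_p$. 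By \cref{Cor: fixed cycle}, $\sigma_{\p,\x_p}$ is an $m$-cycle (here $d=1$ since $\gcd(m,n)=1$), so the rotational part of $T$ has order exactly $m$; therefore $\p^m|_{A_p}$ is the identity and no smaller power is, giving $\p^m\y=\y$ with $\y$ not fixed by any $\p^j$, $0<j<m$. This shows every point of $A_p\setminus\x_p$ is periodic of period exactly $m$.

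For the reverse inclusion—no point outside $A_p$ is periodic—I would argue by contradiction. Suppose $\y\notin A_p$ is periodic, so $\p^k\y=\y$ for some minimal $k\ge 1$. By \cref{Periodic} we may as well take $k\mid m$. Because acting is a weak contraction toward $\x_p$ and $\y$ is periodic, $\|\p^j\y-\x_p\|$ is constant in $j$; in particular $\|\p^k\y-\x_p\|=\|\y-\x_p\|$, so (word version of \cref{SameNorm}) $\p^k$ maps the segment from $\y$ to $\x_p$ isometrically, and there is a linear map $T$ with $\p^k|_{A_\y}=T|_{A_\y}$ and $\p^k|_{A_p}=T|_{A_p}$, where $A_\y$ is an alcove containing $\y$. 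Since $\x_p\in A_p$ is fixed, $T$ is an isometry fixing $\x_p$, hence restricted to $A_p$ it is a symmetry of $A_p$; combined with the $m$-cycle structure of \cref{Cor: fixed cycle} this forces $\p^m|_{A_\y}$ to be the identity as well. Now I would invoke \cref{lemma: On Hi i+1}: applied to the parking function $\p$ (or to a suitable cyclic rotation/power of $\p$ so that $\x_p$ is again a fixed point), it produces an intermediate step at which the alcove containing the image of $\x_p$ has a facet on some wall $H^0_{i,i+1}$. At that same step the image of $\y$ lies in the \emph{interior} of an alcove (generic $\y$), so it is strictly on one side of $H^0_{i,i+1}$; following the segment from $\x_p$'s image to $\y$'s image, it crosses $H^0_{i,i+1}$, and the next letter $p_{i}$ (which equals the relevant coordinate index) resorts the two images differently. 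This strictly decreases the distance between the images of $\x_p$ and $\y$, contradicting that $\|\p\y-\x_p\|$ stayed constant along the whole orbit. Hence $\y\in A_p$, and since $\x_p$ is fixed (not periodic of positive period), the periodic points are exactly $A_p\setminus\x_p$.

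I expect the main obstacle to be making the contradiction in the second part airtight: \cref{lemma: On Hi i+1} is stated for the \emph{fixed point} $\x_p$ and its orbit, and one has to verify that the wall-crossing it detects between consecutive images of $\x_p$ genuinely separates the corresponding images of a nearby periodic point $\y$, and that this happens \emph{within} one period of $\p^m$ rather than being washed out. The clean way is to note that $\|\p^j\y-\x_p\|$ constant for all $j$ forces, by the word version of \cref{SameNorm} and \cref{lemma: cords in part}–\cref{Cor: fixed cycle}, that at \emph{every} intermediate step $\sigma_{p_{j},\cdot}$ agrees on the image of $\y$ and the image of $\x_p$; but \cref{lemma: On Hi i+1} exhibits a step where the image of $\x_p$ sits exactly on $H^0_{i,i+1}$ (distance $1$ between two consecutive coordinates) while the image of $\y$, lying in an open alcove, cannot, so along the connecting segment some point hits the wall and gets sorted the other way—contradicting constancy of the permutation, hence of the distance. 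Handling boundary points $\y\in\partial A'$ for $A'\ne A_p$ is then routine by the perturbation/interior-point trick used in \cref{Periodic} and \cref{lemma: same perm by letter}. Finally, one records that period is exactly $m$ (not a proper divisor) because by \cref{Cor: fixed cycle} $\sigma_{\p,\x_p}$ is a single $m$-cycle, so $\p^j|_{A_p}\ne\mathrm{id}$ for $0<j<m$, and the same holds on $A_\y$ via the shared linear map.
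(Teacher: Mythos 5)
Your overall strategy matches the paper's: the same split into two inclusions, with \cref{lemma: same perm by letter} and \cref{Cor: fixed cycle} giving that $\p^m$ acts as the identity on $A_p$, and \cref{Periodic} plus \cref{lemma: On Hi i+1} driving a contradiction for points outside $A_p$. The first half is fine. The gap is in the second half, at exactly the spot you flag as the "main obstacle": you assert that at the single step produced by \cref{lemma: On Hi i+1} the images of $\x_p$ and $\y$ are separated by $H^0_{i,i+1}$ (or that the segment joining them crosses that wall). This is not justified and is false in general. First, the image of $\x_p$ does not sit on $H^0_{i,i+1}$; the lemma only gives $|x^{(\ell)}_{p,i}-x^{(\ell)}_{p,i+1}|=1$, i.e.\ the \emph{alcove} has a facet on that wall. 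Second, $H^0_{i,i+1}$ is a wall of the Weyl chamber $V^m$ itself, so no two points of $V^m$ are ever separated by it; the separation that actually forces a strict contraction is that the shifted (pre-sorting) points $p_\ell\star(\cdot)$ lie on opposite sides of $\{x_i=x_{i+1}\}$, equivalently that one image satisfies $x_{i+1}-x_i<m$ and the other does not. There is no reason the particular periodic point $\y$ satisfies this at that particular step: its image can be far from that wall on the same side as $\x_p$'s image, in which case both points are resorted identically there and no contradiction arises. Your fallback ("some point of the connecting segment hits the wall and gets sorted the other way") does not contradict anything about the two endpoints or about the equality of their sorting permutations.

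The paper closes this gap by quantifying over the whole periodic cycle rather than a single time step: after conjugating to $\p'=p_{\ell-1}\cdots p_0p_{n-1}\cdots p_\ell$, which fixes $\x_p'=p_{\ell-1}\cdots p_0\x_p$ lying in the alcove with a facet on $H^0_{i,i+1}$, it considers \emph{all} the points $\p'^{k}\x',\p'^{k+1}\x',\dots$ of the periodic orbit and argues that for \emph{some} $t$ the required separation from $\x_p'$ occurs; the strict decrease at that $t$ contradicts periodicity. The reason some $t$ works is the rotational structure coming from \cref{Cor: fixed cycle}: $\p'$ acts on the orbit of alcoves through the order-$m$ linear symmetry fixing $\x_p'$, which carries the orbit past every wall of $A_{p'}$. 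To repair your argument you must replace "at that same step" by "at some step of the periodic cycle" and supply this existence argument.
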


\begin{proof}
If $\x$ is in the interior of $A_p$, then $\sigma_{\p, \x_p} = \sigma_{\p, \x}$ which is a cycle of length $m$. Thus for any point $\x \in A_p$ we have $p^m \cdot \x = \x$, thus all points in the closure are periodic.

For any point $\x$ not in the alcove. By \cref{Periodic} there is some $k$ such that $\|\p^k \x - \x_p \| = \|\p^{k+j}\x - \x_p \|$ for all $j \geq 0$. Assume $\p^k \x$ is not in the alcove that contains $\x_p$. Then by \cref{lemma: On Hi i+1} there is an $\ell$ such that $|x_i^{(\ell)} - x_{i+1}^{(\ell)}| = 1$, which means $p_{\ell-1}\dots p_1p_0 x_p$ is in an alcove with a facet contained in $H_{i, i+1}^0 = \{x_i = x_{i+1}\}$. Let $\x_p' = p_{\ell-1}\dots p_1p_0 \x_p$ and $\x' = p_{\ell-1}\dots p_1p_0 \x $, then $\p ' = p_{\ell-1}\dots p_0p_{n-1}\dots p_{\ell}  $ fixes $\x_p'$ and $\{\p'{}^k \x'\}_k$ becomes periodic. If we consider the points $\{ \p'^k\x' , \p'^{k+1}\x' , \p'^{k+2}\x', \dots , \p'^{k+n-1}\x' \}$ there is $t$ where $\p'^{k+t}\x'$ and $\x_p'$ are separated by $H_{i, i_1}^0$. In this sorting step $\p'^{k+t}\x'$ must have gotten closer to $\x_p'$. Thus $\{ \p^k \x\}$ must eventually be in $A_p$. 

\end{proof}

This last lemma shows that if $\x$ is not in the alcove fixed by $\p$ then $|\p^m\x - \x_p  | < |\x - \x_p|$. Because alcoves are mapped isometrically to alcoves, we get the following:

\begin{corollary} \label{corollary: x0 converge to fixed point }
    The centroid of the fundamental alcove $\x_0 \in A_0 $ will converge to the fixed point of $\x_p \in A_p$ under the action of $\p$. i.e. $\p^k \cdot \x_0 \to \x_p$.
\end{corollary}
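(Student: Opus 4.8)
The plan is to combine Lemma \ref{Periodic} with Lemma \ref{lemma: x to fixed alcove} and the observation that acting by a parking function maps alcoves isometrically onto alcoves. First I would note that $\x_0$, the centroid of the fundamental alcove $A_0$, is some point in $V^m$, so by Lemma \ref{Periodic} there is a $k$ with $\p^{k+m}\x_0 = \p^k\x_0$; equivalently the orbit $\{\p^j\x_0\}_j$ eventually becomes periodic with period dividing $m$. Second, I would invoke Lemma \ref{lemma: x to fixed alcove}: the set of periodic points of $\p$ is exactly $A_p\setminus\x_p$, and more to the point, the proof of that lemma shows that any point not in $A_p$ is strictly moved toward $\x_p$ by some iterate of $\p$, so the eventually-periodic orbit of $\x_0$ cannot stabilize outside $A_p$. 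Hence there is a $k_0$ with $\p^{k_0}\x_0 \in A_p$.

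Once the orbit enters $A_p$, I would argue it lands exactly on $\x_p$. Indeed $\p$ restricted to $A_p$ is the linear isometry whose linear part is the cycle $\sigma_{\p,\x_p}$ of length $m$ (Corollary \ref{Cor: fixed cycle}), which fixes $\x_p$ and permutes the other points of $A_p$ in orbits of size $m$; but $\x_0$, being the centroid of $A_0$, maps under any word in $[m]^n$ to a centroid of an alcove (this is exactly the content of Lemma \ref{Theorem: Fixed iff Parking}-adjacent discussion, or one can see it directly since the action sends centroids to centroids), and an alcove has a unique centroid, so $\p^{k_0}\x_0$ must be the centroid of $A_p$, which is $\x_p$ by Lemma 2.9 of \cite{gorsky_mazin_vazirani_2014}. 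Therefore $\p^k\x_0 = \x_p$ for all $k \geq k_0$, which gives $\p^k\cdot\x_0 \to \x_p$ — in fact the convergence happens in finitely many steps.

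The one point that needs a little care is the claim that the eventually-periodic orbit of $\x_0$ cannot get stuck outside $A_p$: this is precisely what the proof of Lemma \ref{lemma: x to fixed alcove} establishes (using Lemma \ref{lemma: On Hi i+1} to find a coordinate pair at distance $1$ and then a later sorting step that strictly decreases the distance to $\x_p$), so I would simply cite that lemma rather than reproving it. The main obstacle, such as it is, is just being careful that ``period dividing $m$'' plus ``strict contraction off $A_p$'' really forces the orbit into $A_p$ in finite time rather than merely approaching its boundary; since the orbit visits only finitely many alcoves (the action is a weak contraction, as in the proof of Lemma \ref{Periodic}) and each alcove outside $A_p$ is eventually left for good, finiteness is immediate.
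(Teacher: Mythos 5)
Your proof is correct and follows essentially the same route as the paper: use Lemma \ref{Periodic} and Lemma \ref{lemma: x to fixed alcove} to force the orbit into $A_p$ in finitely many steps, then observe that the action preserves centroids so the orbit must land on the unique centroid $\x_p$ of $A_p$. The paper states the corollary with only the one-line remark preceding it and supplies the centroid step later (in the Section 7 proof that the inversion algorithm becomes affine periodic), so you have in fact written out the argument more completely than the paper does.
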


\section{Fixed points of parking functions}

    It was shown in \cite{mccammond_thomas_williams_2019} that the set of all fixed points of an $(m, n)$-parking function is a convex set of dimension $d-1$ in $V^m$.
    Recall the notation from Section \ref{SubSec: Alcoves and the action of a Parking Function}. Let $ \Theta = (\Delta_0, \Delta_1, \dots , \Delta_{d-1} )$ be a tuple of $0$-balanced $(m/d, n/d)$-invariant sets. For each $\Theta$, let $V_\Theta$ be a copy of the hyperplane $\{\x \in \nR^{d} : \sum_{i = 0}^{d-1} x_i = 0 \}$, 
    and consider the hyperplane arrangement $\Sigma_{\Theta} \subset V_\Theta$ consisting of all shifts $x_0, \dots, x_{d-1}$ for which there exists $i$ and $j$ such that
    \[
    (d S_i + x_i) \cap (d S_j + x_j) \neq \emptyset,
    \]
    where $S_i$ is the skeleton of $\Delta_i$.

    A point $(x_0, x_1, \dots, x_{d-1})$ in a region of $\Sigma_\Theta$ corresponds to the shift of $\Theta = (\Delta_0, \Delta_1, \dots, \Delta_{d-1} )$, specifically 
\[
\Delta(x_0, \dots x_{d-1}) = \bigcup_{i=0}^{d-1} ( d \Delta_i + x_i).
\]
We consider the configuration space 
$$  W = \{ (\Theta , (x_0, \dots , x_{d-1})) : \Theta = (\Delta_0, \dots, \Delta_{d-1}) \text{ and  } (x_0, \dots , x_{d-1}) \notin \Sigma_\Theta \}.$$

One can extend the map $\phi:M_{m, n} \to V^m$  (Definition \ref{def: phi coprime} from section $3$) to $W$. Consider a point $(\Theta,(x_0, \dots , x_{d-1}) ) \in W$. For each invariant set $\Delta_i$ of the tuple $\Theta = (\Delta_0, \dots, \Delta_{d-1})$, let $A_i$ be the set of $m$-generators say $A_i: = \{ a_{i, 0}, a_{i, 1} , \dots, a_{i, m/d -1} \}$. Then $\phi : W \to V^m$ is given by
\[
    \phi( (\Theta,(x_0, \dots , x_{d-1}) ) ) = \sort\left(\bigcup_{i =1}^{d-1} d A_i + x_i\right) - \mathbb{1} k,
\]
where
\[
k = \frac{d}{m}\sum_{i = 0, j = 0}^{d-1,m/d-1} a_{i,j} + \frac{1}{d} \sum_{i = 0}^{d-1} x_i.
\]

The union is a set of $m$ distinct numbers thought of as a sorted point in $V^m$. The $m$-cogenerators are part of the skeleton of each $\Delta_i$, and thus do not intersect; the $m$-generators are a shift by $m/d$ from $m$-cogenerators and thus all coordinates of $\phi( (\Theta,(x_0, \dots , x_{d-1}) )  )$ are distinct.

We have a result similar to Lemma \ref{ lemma: skel of pDelta } in this setting. Let $\cord( (x_0, \dots , x_{m/d -1}) ) = \{ x_0, \dots x_{m/d -1}\}$ be the coordinates of a point as a set. Recall that $S_i$ is the skeleton of $\Delta_i$ as an $(m,n)$-invariant set, now let $S_i'$ the skeleton of $\Delta_i$ as an $(n, m)$-invariant set. Note that this switch of $m$ and $n$ also occurs in Lemma \ref{ lemma: skel of pDelta }. 

\begin{lemma} \label{lemma: Skel gen}
    Let $\phi((\Theta, \x))$ be a fixed point of $\p$ and for each $i$ let $M_i =  p_{i-1}\dots p_0\phi( (\Theta , \x )) $. Then
    \[
    S = \bigcup_{i =0}^{n}  \left( \cord( M_i ) + i  \right)
    \]
     is the skeleton 
     \[
     S' = \bigcup_{i=0}^{d-1} dS_i' + x_i + n.
     \] 
\end{lemma}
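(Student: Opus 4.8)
The plan is to push the whole computation down to the single $m$-invariant set $\Delta := \Delta(\x) = \bigcup_{i=0}^{d-1}(d\Delta_i + x_i)$ and then quote \cref{ lemma: skel of pDelta }. It suffices to treat the case where $\x$ is an integer point with one coordinate in each residue class modulo $d$, so that $\Delta$ is a genuine bounded, co-bounded $(m,n)$-invariant subset of $\nZ$ (the general case follows because every quantity involved is locally constant on the complement of $\Sigma_\Theta$). First I would collect the structural facts about $\Delta$: each $d\Delta_i+x_i$ is $(m,n)$-invariant, since $\Delta_i$ is $(m/d,n/d)$-invariant and $d\cdot(m/d)=m$, $d\cdot(n/d)=n$, hence so is $\Delta$; the pieces lie in distinct residue classes mod $d$, and the hypothesis $\x\notin\Sigma_\Theta$ says precisely that the skeletons $dS_i+x_i$ are pairwise disjoint. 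Combining these two observations, the $m$-generators, the $n$-generators, and the skeleton-as-an-$(n,m)$-invariant-set of $\Delta$ (and of any translate of $\Delta$) are the disjoint unions over $i$ of the corresponding objects for the pieces; in particular the skeleton of $\Delta+n$ as an $(n,m)$-invariant set equals $\bigcup_{i=0}^{d-1}(dS_i'+x_i+n)=S'$. Matching the formula for $\phi$ on $W$ against \cref{def: phi coprime} gives $\phi((\Theta,\x))=\phi(\Delta)$ with $\Delta$ regarded as an element of $M_m$, and since $\phi$ intertwines the letter actions (the lemma following \cref{def: phi coprime}) we get $M_i=\phi(\Delta^{(i)})$, where $\Delta^{(i)}=p_{i-1}\cdots p_0\,\Delta$.

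Next I would verify $\p\cdot\Delta=\Delta+n$. Acting by a single letter deletes an $m$-generator $a$ and creates $a+m$, so the sum $\Sigma_i$ of the $m$ $m$-generators of $\Delta^{(i)}$ satisfies $\Sigma_{i+1}=\Sigma_i+m$, whence $\Sigma_n=\Sigma_0+nm$. On the other hand $\phi(\Delta^{(n)})=\p\cdot\phi((\Theta,\x))=\phi((\Theta,\x))=\phi(\Delta)$, and since $\phi$ pins down an $m$-invariant set's generators up to a single common translation, $\Delta^{(n)}=\Delta+c$ for some constant $c$; comparing sums of $m$-generators yields $mc=\Sigma_n-\Sigma_0=nm$, so $c=n$. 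With $\p\cdot\Delta=\Delta+n$ in hand, \cref{ lemma: skel of pDelta } applies and says $\bigcup_{i=0}^n\mgen(\Delta^{(i)})$ is the skeleton of $\Delta+n$ as an $(n,m)$-invariant set, which by the previous paragraph is exactly $S'$.

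It remains to identify $\bigcup_{i=0}^n\mgen(\Delta^{(i)})$ with $S=\bigcup_{i=0}^n(\cord(M_i)+i)$, and this is where the ``$+i$'' earns its keep: from $\phi(\Delta^{(i)})=\mgen(\Delta^{(i)})-\frac{\Sigma_i}{m}\mathbb{1}$ and $\Sigma_i=\Sigma_0+im$ we get $\cord(M_i)=\mgen(\Delta^{(i)})-\frac{\Sigma_0}{m}-i$, so that $\cord(M_i)+i=\mgen(\Delta^{(i)})-\frac{\Sigma_0}{m}$ for every $i$ — an $i$-independent shift. Hence $S=\bigl(\bigcup_{i=0}^n\mgen(\Delta^{(i)})\bigr)-\frac{\Sigma_0}{m}=S'$ up to the overall normalizing translation built into $\phi$ (and on the nose for the representative of $(\Theta,\x)$ whose $m$-generators are centered). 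I expect the main obstacle to be the careful use of the non-interaction hypothesis $\x\notin\Sigma_\Theta$ together with the distinct-residue condition: one must be sure that ``generators (resp.\ skeleton) of the union'' really does decompose as ``union of generators (resp.\ skeletons) of the pieces,'' both for $\Delta$ and for its translate $\Delta+n$, since this is exactly what bridges the single-set statement of \cref{ lemma: skel of pDelta } and the product form of $S'$. Everything else is the linear bookkeeping with $\Sigma_i$ above.
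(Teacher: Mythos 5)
Your route is genuinely different from the paper's: the paper proves this lemma by a direct element-by-element analysis of the coordinates under the unbalanced action (showing each $y\in S$ is either an $m$-generator or an $n$-cogenerator of the appropriate shifted piece), whereas you push everything down to the single set $\Delta(\x)=\bigcup_i(d\Delta_i+x_i)$ and invoke \cref{ lemma: skel of pDelta } as a black box. Most of your argument is sound and in places more careful than the paper's: the decomposition of generators and skeletons along residue classes mod $d$ is correct, the derivation of $\p\cdot\Delta=\Delta+n$ by comparing $\Sigma_n=\Sigma_0+nm$ with the shift ambiguity in $\phi$ is clean, and the observation that $\cord(M_i)+i=\mgen(\Delta^{(i)})-\Sigma_0/m$ is an $i$-independent shift is exactly the right bookkeeping (you are right that the lemma as stated only holds up to the normalizing constant $k=\Sigma_0/m$ built into $\phi$; the paper's own proof silently ignores this).

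The genuine gap is the opening reduction. To replace the given real shift $\x$ by an integer point $\x_0$ with distinct residues in the same region, you need two things you do not supply: (i) that such an $\x_0$ exists in every region of $\Sigma_\Theta$, and, more seriously, (ii) that $\p$ fixes $\phi((\Theta,\x_0))$ given only that it fixes $\phi((\Theta,\x))$. Point (ii) is precisely \cref{ Lemma: Same region }, which the paper proves \emph{using} the present lemma, so asserting it via ``every quantity involved is locally constant'' is circular: the nontrivial content of local constancy here is that the sorting permutations $\sigma_{p_t\cdots p_0,\phi((\Theta,\cdot))}$, and hence the fixed-point property, do not change across a region, and that is the very thing \cref{ Lemma: Same region } establishes. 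The fix is to drop the reduction entirely: note that \cref{shiftset}, \cref{Corollary: Each Step mn}, \cref{lemma: Also n gen of Delta}, and \cref{ lemma: skel of pDelta } never use integrality of $\Delta$ -- their proofs only use $m$- and $n$-invariance and the definition of generators -- so they apply verbatim to the subsets $\bigcup_i(d\Delta_i+x_i)\subset\nR$ for arbitrary real shifts. With that observation your argument goes through for the given $\x$ directly, with no reduction and no circularity.
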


\begin{proof}
    With an unbalanced action, we have $\p \cdot \phi(\Theta, \x)  = \phi(\Theta, \x) + n$.
    If $y \in \cord(\phi(\Theta, \x) +n)\subset S$ then for some $j$ we have $y \in d A_j + x_j + n$, that is $y$ corresponds to an $m$-generator of $\Delta_j$ and $y \in S'$. 

    The unbalanced action replaces a coordinate $x_j \in \cord(\x)$ in $V^m$ by $x_j +m$ and then sorts. If $y \in \cord(S) \setminus \cord((\phi(\Theta, \x)  + n))$ then for some $i \in \{ 0, 1, \dots, n -2\}$ we have $y \in \cord(p_i \dots p_0 \phi(\Theta, \x))$. Then for some $j$ and $k$ we have $y = da + km$ where $a \in A_j $ is an $m$-generator of $\Delta_j$. Then $y + n = da + km + n \in d \Delta_j + x_j + n$, but $y \notin  d \Delta_j + x_j + n$ because $y \notin \cord((\phi(\Theta, \x)  + n))$. That is $y$ is an $n$-cogenerator of $\Delta_j + n$. Thus $y \in S'$. 
    
\end{proof}

\begin{lemma} \label{ Lemma: Same region }
    If $(\Theta,(x_0, \dots x_{d-1}) ) \in W$ and $(\Theta, (y_0, \dots y_{d-1})) \in W$ are from the same region of $\Sigma_\theta$ and $\p \in PW_m^n$ is such that $\p$ fixes $\phi((\Theta,(x_0, \dots x_{d-1})))$, then $\p$ fixes $\phi((\Theta, (y_0, \dots y_{d-1})))$. 
\end{lemma}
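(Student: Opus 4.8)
The plan is to show that the combinatorial data recording \emph{how} $\p$ acts is constant on a region of $\Sigma_\Theta$, so that fixing one representative forces fixing all of them. The key invariant to track is, for each step $i \in \{0,\dots,n-1\}$, the index $p_i$ together with the way the coordinates of $M_i = p_{i-1}\dots p_0\,\phi((\Theta,\x))$ get resorted. By Lemma~\ref{lemma: Skel gen}, when $\p$ fixes $\phi((\Theta,\x))$ the union $S = \bigcup_{i=0}^n(\cord(M_i)+i)$ is exactly the skeleton $S' = \bigcup_{i=0}^{d-1} dS_i' + x_i + n$, and the letters $p_i$ are determined by recording, at each step, the position among the current $m$-generators of the element being removed. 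So the strategy is: move the shift from $(x_0,\dots,x_{d-1})$ to $(y_0,\dots,y_{d-1})$ continuously within the region, and argue that the sequence of removal positions $p_0,\dots,p_{n-1}$ — and hence the word $\p$ and the fact that it shifts the set by $n$ — does not change.

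First I would set up the continuous path: since $(\Theta,(x_0,\dots,x_{d-1}))$ and $(\Theta,(y_0,\dots,y_{d-1}))$ lie in the same (open, convex) region of the hyperplane arrangement $\Sigma_\Theta \subset V_\Theta$, the straight-line segment between them stays in that region. Along this segment the skeletons $dS_i + x_i(t)$ never collide (that is the defining property of being off $\Sigma_\Theta$), so the \emph{cyclic/linear order} of all skeleton elements of $\Delta(x_0(t),\dots,x_{d-1}(t))$ is constant in $t$. In particular the order in which $n$-cogenerators and $m$-generators interleave in the skeleton is constant. Second, I would invoke Lemma~\ref{ lemma: G to mono } (via Lemma~\ref{Mono Bijection}) — or more directly the description of the associated monotone parking function as the sequence of removal positions — to conclude that the monotone parking function and, more importantly for the present claim, the actual positions $p_i$ read off at each step are governed purely by this order data, hence constant. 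The cleanest route is probably: the action of $\p$ on $\phi((\Theta,\x(t)))$ is ``remove the $n$-generators of $\Delta(\x(t))$ in the order prescribed by $\p$, recording positions among current $m$-generators''; because those positions are order-determined and the order is $t$-independent, $\p$ removes precisely the $n$-generators of $\Delta(\x(t))$ for every $t$, i.e. $\p\cdot\phi((\Theta,\x(t))) = \phi((\Theta,\x(t)))+n$, which by the coprime-analogue reasoning (the $d$ coprime sub-actions each shift their part) says $\p$ fixes $\phi((\Theta,\x(t)))$.

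The main obstacle I anticipate is making precise the claim that ``the step-by-step removal positions are determined by the order type of the skeleton.'' One has to be careful that the intermediate sets $M_i$, which are points of $V^m$ coming from partially-acted invariant sets (not themselves of the form $\phi$ of an $(m,n)$-invariant set), still have their coordinate orders controlled by the region: after removing some generators and adding $m$ to them, the resulting multiset of ``current $m$-generators'' is a shift of a sub-object whose relevant collisions are again governed by $\Sigma_\Theta$ (or possibly a refinement). I would handle this by an induction on $i$: assuming $M_i(t)$ has $t$-independent coordinate order and that $p_{i-1}\dots p_0$ has acted identically for all $t$, the element at position $p_i$ among the $m$-generators of $M_i(t)$ is the same index, it is an $n$-generator of the original set for all $t$ (the $n$-generator structure is likewise order-stable within the region), so $M_{i+1}(t)$ is obtained the same way and retains $t$-independent order. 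The base case $i=0$ is the observation that the minimal element is both an $m$- and $n$-generator, forcing $p_0 = 0$ independent of $t$. Once the induction closes, $\p\cdot\phi((\Theta,(y_0,\dots,y_{d-1}))) = \phi((\Theta,(y_0,\dots,y_{d-1})))+n$, and then Lemma~\ref{shiftset} (or its analogue for $\phi$ on $W$) upgrades this to $\p$ fixing $\phi((\Theta,(y_0,\dots,y_{d-1})))$, completing the argument.
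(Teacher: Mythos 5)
Your proposal is correct and is essentially the paper's argument read forwards rather than by contradiction: the paper assumes a first step $t$ at which the sortings of $\phi((\Theta,\x))$ and $\phi((\Theta,\y))$ differ, extracts from that an order reversal between two skeleton elements $da_{i_1,j_1}+x_{i_1}+k_1m+m$ and $da_{i_2,j_2}+x_{i_2}+k_2m$, and contradicts the same-region hypothesis via Lemma \ref{lemma: Skel gen}, whereas your induction on $i$ establishes the same order-determinacy of the sorting/removal positions directly. Both arguments hinge on the identical key fact that the coordinates arising during the action are skeleton elements whose relative order is constant on a connected component of the complement of $\Sigma_\Theta$.
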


\begin{proof}
   
    Let $(\Theta, (x_0, \dots x_{d-1}) ) = (\Theta, \x)$ and $(\Theta, (y_0, \dots y_{d-1})) = (\Theta , \y)$ be as in the statement of the lemma. For each $i \in \{0, \dots, d-1 \}$, let $A_i = \{ a_{i, 0}, a_{i, 1} , \dots, a_{i, n/d -1} \}$ be the $m$-generators of $\Delta_i$. If $\p$ does not fix $\phi((\Theta, \y))$, then during the action the coordinates must have been sorted differently. Consider the first time they are not sorted the same way. That is some $t$ such that 
    $$\sigma_{p_{t-1}\dots p_0,\phi((\Theta, \x))} = \sigma_{p_{t-1}\dots p_0,\phi((\Theta, \y))} $$ but 
    $$\sigma_{p_tp_{t-1}\dots p_0, \phi((\Theta, \x))} \neq \sigma_{p_tp_{t-1}\dots p_0, \phi((\Theta, \y))} .$$ That means there is some $(i_1, j_1, k_1)$ and $(i_2, j_2, k_2)$ such that 
    \begin{align*}
        da_{i_1, j_1} + x_{i_1} + k_1m + m < da_{i_2, j_2} + x_{i_2} + k_2 m
        \\
        da_{i_1, j_1} + y_{i_1} + k_1m + m > da_{i_2, j_2} + y_{i_2} + k_2 m.
    \end{align*}

    Visually changing the shifts from $\x$ to $\y$ changes how elements sort.

    \begin{center}
        \includegraphics[scale = .2]{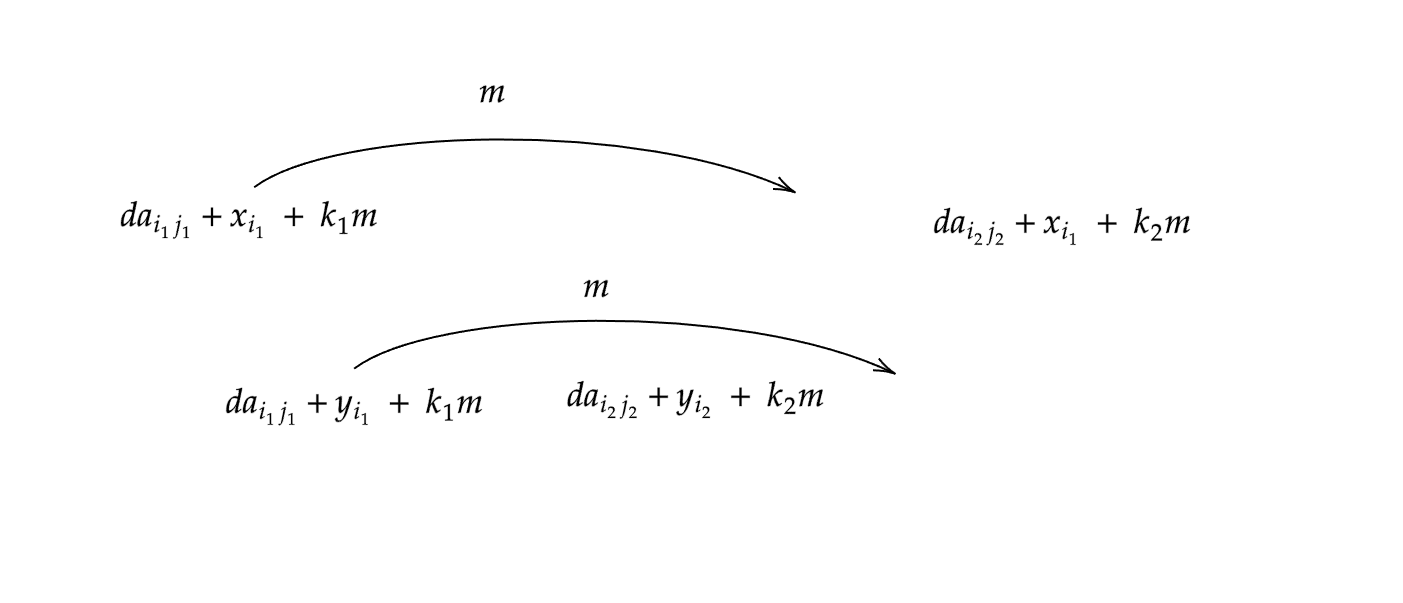}
    \end{center}

    By lemma \ref{lemma: Skel gen}, we know $ \bigcup_{i =0}^n p_{i-1}\dots p_0\phi( \Theta , \x ) = \bigcup_{i=0}^{d-1} dS_i' + x_i + n = S$ where $S_i'$ is the skeleton of $\Delta_i$ as an $(n, m)$-invariant set. We know that $da_{i_1, j_1} + x_{i_1} + k_1m + m \in S$. This inequality is saying that $\bigcup_{i=0}^{d-1} dS_i' + x_i + n$ and $\bigcup_{i=0}^{d-1} dS_i' + y_i + n$ are in a different order. But that means that $\phi(\Theta, \x)$ and $\phi(\Theta, \y)$ must be in different connected components of the complement to $\Sigma_\Theta$.

\end{proof}

\begin{lemma}
    If $(\Theta,(x_0, \dots x_{d-1}) ) ,(\Theta, (y_0, \dots y_{d-1})) \in W$ are from the same region of $\Sigma_\theta$ and $\p \in PW_m^n$ such that $\p$ fixed $\phi((\Theta,(x_0, \dots x_{d-1})))$, then for each $k \in \{ 0, 1, \dots, n-1\}$ the coordinates of $p_k\dots p_0\phi((\Theta, (y_0, \dots y_{d-1})))$ are distinct.
\end{lemma}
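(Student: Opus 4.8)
The plan is to derive this directly from the two preceding lemmas rather than re-examining the dynamics of the action. First I would apply Lemma~\ref{ Lemma: Same region } to conclude that $\p$ also fixes $\phi((\Theta,(y_0,\dots,y_{d-1})))$; write $\y=(y_0,\dots,y_{d-1})$. Then I would apply Lemma~\ref{lemma: Skel gen} with $\y$ in the role of $\x$ — this is legitimate, since $(\Theta,\y)\in W$ and, by the previous step, $\phi((\Theta,\y))$ is a fixed point of $\p$ — to obtain, for each $k\in\{0,1,\dots,n-1\}$, the inclusion
\[
\cord\big(p_k\cdots p_0\,\phi((\Theta,\y))\big)+(k+1)\ \subseteq\ \bigcup_{i=0}^{d-1}\big(dS_i'+y_i+n\big),
\]
where $S_i'$ is the skeleton of $\Delta_i$ as an $(n,m)$-invariant set. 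In words: every coordinate that occurs along the orbit of $\phi((\Theta,\y))$ is, after the appropriate global shift, one of the translated skeleton elements $ds+y_i$.

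The second half of the argument is to show that these translated skeleton elements are automatically pairwise distinct, which makes the displayed inclusion force the coordinates of $p_k\cdots p_0\phi((\Theta,\y))$ to be distinct. The key observation is that the arrangement $\Sigma_\Theta$, defined from the $(m,n)$-skeletons $S_i$, coincides with the arrangement built the same way from the $(n,m)$-skeletons $S_i'$: by the identity at the end of Section~3 we have $S_i'=S_i+(n-m)$, a translation by the same constant $n-m$ for every $i$, so substituting $a\mapsto a+(n-m)$ into each defining equation $da+x_i=db+x_j$ leaves that equation unchanged. Hence $(\Theta,\y)\in W$ says precisely that $\y$ avoids the arrangement built from the $S_i'$, i.e.\ the sets $dS_i'+y_i$ for $i=0,\dots,d-1$ are pairwise disjoint; since each $S_i'$ is a set and $d\neq 0$, the union $\bigcup_i(dS_i'+y_i)$, and therefore its translate $\bigcup_i(dS_i'+y_i+n)$, consists of pairwise distinct integers. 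A subset of a set of distinct integers again has distinct elements, so $\cord(p_k\cdots p_0\phi((\Theta,\y)))$ consists of distinct integers for every $k$, which is the claim.

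The main thing requiring care is the mild circularity in ``apply Lemma~\ref{lemma: Skel gen} to $\y$'': both Lemma~\ref{lemma: Skel gen} and Lemma~\ref{ Lemma: Same region } are phrased around the orbit of $\phi((\Theta,\x))$, where distinctness of coordinates is already available, so I would spell out explicitly that neither statement uses no-tie well-definedness of the orbit — the action is defined on all of $V^m$, and Lemma~\ref{lemma: Skel gen} only uses that each letter either adds $m$ to a single coordinate or performs a global shift, starting from the coordinate set $\{d a_{i,j}+y_i\}$. If one prefers to avoid even this, the fallback is a direct induction on $k$: for each coordinate of $p_{k-1}\cdots p_0\phi((\Theta,\y))$ track which $(m/d)$-generator $a_{i,j}$ of which $\Delta_i$ it descends from and how often it has been bumped, use Lemma~\ref{ Lemma: Same region } to see that the $\x$- and $\y$-orbits sort identically so these bump counts agree with those of the $\x$-orbit, use Lemma~\ref{lemma: Skel gen} applied to $\x$ to identify the occurring coordinates with honest skeleton elements of the $\Delta_i$, and conclude from $\y\notin\Sigma_\Theta$ that no collision can occur at the current step. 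In either approach the only genuinely new ingredient beyond the earlier lemmas is the identification of $\Sigma_\Theta$ with the arrangement coming from the $(n,m)$-skeletons, which is the one-line substitution above.
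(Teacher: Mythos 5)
Your proposal is correct and follows essentially the same route as the paper: apply Lemma~\ref{ Lemma: Same region } to transfer the fixed-point property to $\phi((\Theta,\y))$, then apply Lemma~\ref{lemma: Skel gen} to identify the orbit coordinates with elements of the shifted skeleton union, whose elements are distinct. You additionally spell out why that union has distinct elements (via $S_i'=S_i+(n-m)$ and $\y\notin\Sigma_\Theta$), a point the paper's proof asserts without justification.
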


\begin{proof}

    By Lemma \ref{ Lemma: Same region } $\p$ also fixes $\phi(\Theta, (y_0, \dots y_{d-1}))$. By Lemma \ref{lemma: Skel gen} the skeleton $S'$ has unique coordinates. This means for each $k$ the coordinates of $p_k\dots p_0\phi((\Theta, (y_0, \dots y_{d-1})))$ are distinct.

\end{proof}

A region of $\Sigma_\Theta$ can be fixed by multiple parking functions, and some parking functions fix multiple regions. These regions have a unique monotone parking function that fixes every $\Delta(x_0, \dots, x_{d-1})$ in that region. 

The image of a region of $\Sigma_\Theta$ is the set of fixed points for the monotone parking function that region corresponds to. The image of all of the $\Sigma_\Theta$ is an arrangement of $(d-1)$ affine subspaces, which is the union of all fixed points of all parking functions.

\begin{example} Consider $m = 3$ and $n = 4$ and $\Delta = \nZ_{\geq 0}$. All of the parking functions $\p$ such that $\p \cdot \Delta = \Delta + n$ are  

$$\{0000, 0001, 0011, 0010, 0100, 0002, 0012\}.$$

Now let $m = 6, n =4$ and $\Theta = ( \Delta_0, \Delta_1  )$ where $\Delta_0 = \{ -1, 1, 2, 3, 4, 5, \dots \}$ and $\Delta_1 = \nZ_{\geq 0}$. Then $S_0 = \{-3, -1, 0, 1, 3\}$ and $S_2 = \{-2, -1, 0, 1,2 \}$  Consider 
\begin{align*}
    &\Delta(-3.5, 3. 5) = \{ -5.5, -1.5, 0.5, 2.5, 3.5, 4.5, 6.5, \dots\} \text{  and} \\
    &\Delta(-4.5. 4.5) = \{ -6.5, -2.5, -0.5,   1.5, 3.5, 4, 5, 5.5, 6.5, 7.5, \dots  \}.
\end{align*}
See that $(\Theta, (-3,5, 3.5))$ and $(\Theta, (-4,5, 4.5))$ are in different connected components of the complement to $\Sigma_\Theta$ becasue 
\[
(2 S_0 -3) \cap (2S_1 + 3) = \{ -9, -5, -3, -1, 3\} \cap \{ -1, 1, 3,5, 7 \} = \{-1,3 \} \neq \emptyset.
\]
These sets correspond to the points
\begin{align*}
    &\x_0 = \phi(\Theta, (-3,5, 3.5)) = (-7, -3, 1, 2, 4, 6) \text{  and} \\
    &\x_1 = \phi(\Theta, (-4,5, 4.5)) = ( -8.5, -4.5, -0.5, 2.5, 4.5, 6.5 ). 
1\end{align*}

Both $\x_0$ and $\x_1$ are fixed by the $(6, 4)$-parking function $p =  2140$. 

\end{example}

\begin{corollary}
    An $(m, n)$-parking function will fix the image of a union of regions from $\Sigma_\Theta$ under $\phi$. 
\end{corollary}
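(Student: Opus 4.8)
The plan is to bootstrap from a single $\phi$-preimage of a fixed point of $\p$ to the image of an entire region of some $\Sigma_\Theta$, and then take a union; the engine is \cref{ Lemma: Same region }.

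First I would exhibit one fixed point of $\p$ lying in the image of $\phi$. By Theorem 1.1, $\p$ admits a bounded and co-bounded $(m,n)$-invariant set $\Delta$ with $\p\cdot\Delta=\Delta+n$. Write $\Delta=\Delta(x_0,\dots,x_{d-1})=\bigcup_{i=0}^{d-1}(d\Delta_i+x_i)$ for a suitable $d$-tuple $\Theta=(\Delta_0,\dots,\Delta_{d-1})$ of $0$-balanced $(m/d,n/d)$-invariant sets with integer shifts $x_i$ hitting every residue modulo $d$. One has to check that $(x_0,\dots,x_{d-1})\notin\Sigma_\Theta$, i.e. that the shifted skeletons $dS_i+x_i$ are pairwise disjoint; this is forced because $\Delta$ is a genuine $(m,n)$-invariant set, so its $m$-generators and $n$-cogenerators, hence its whole skeleton, are all distinct. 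A direct computation (using that the $x_i$ have distinct residues mod $d$, so the $m$-generators of $\Delta$ are exactly $\bigcup_i(dA_i+x_i)$ with $A_i$ the $(m/d)$-generators of $\Delta_i$) shows that the ``coordinate'' definition of $\phi$ on $M_{m,n}$ from Definition \ref{def: phi coprime} agrees with the extension of $\phi$ to $W$ at this point. Since $\phi$ intertwines the unbalanced action and is unchanged when the underlying invariant set is translated, while $\p$ translates $\Delta$ by $n$, the point $\x_0:=\phi(\Theta,(x_0,\dots,x_{d-1}))$ is a fixed point of $\p$.

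Now I would apply \cref{ Lemma: Same region }. Let $R$ be the region of $\Sigma_\Theta$ containing $(x_0,\dots,x_{d-1})$. The lemma gives that $\p$ fixes $\phi(\Theta,\y)$ for every $\y\in R$, so $\phi(\{\Theta\}\times R)\subseteq\mathrm{Fix}(\p)$; that is, $\p$ fixes the $\phi$-image of the entire region $R$. Running this over every region $R'$ (of any $\Sigma_{\Theta'}$) containing a point whose $\phi$-image is fixed by $\p$, and letting $U$ be the union of all such regions, one gets $\phi(U)\subseteq\mathrm{Fix}(\p)$, and by construction the fixed points of $\p$ that lie in the image of $\phi$ are exactly $\phi(U)$. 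Invoking the fact recorded just before the corollary, that the image of $\phi$ on all of $W$ is precisely the union of the fixed-point sets of all parking functions, this upgrades to $\mathrm{Fix}(\p)=\phi(U)$: $\p$ fixes exactly the image under $\phi$ of a union of regions.

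The main obstacle is the bookkeeping of the first step — matching the two descriptions of $\phi$ and verifying that the decomposition $\Delta=\Delta(x_0,\dots,x_{d-1})$ can always be arranged with the shift off $\Sigma_\Theta$, equivalently that the image of $\phi$ really contains every fixed point of $\p$ and not just the centroid encountered in the coprime case. Everything afterward is a routine application of \cref{ Lemma: Same region } together with \cref{ lemma: skel of pDelta } and \cref{lemma: Skel gen}. As a consistency check, each region of $\Sigma_\Theta$ has dimension $d-1$, matching the dimension of $\mathrm{Fix}(\p)$ established in \cite{mccammond_thomas_williams_2019}, so no fixed points are being missed.
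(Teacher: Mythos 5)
Your proposal is correct and follows the same route as the paper: exhibit one fixed $(m,n)$-invariant set via Theorem 1.1, note it lies in some region of $\Sigma_\Theta$, invoke \cref{ Lemma: Same region } to upgrade from one point to the whole region, and take the union over all such regions. The paper's own proof is just a three-sentence version of this; your extra bookkeeping (matching the two descriptions of $\phi$, checking the shift avoids $\Sigma_\Theta$) and the concluding equality $\mathrm{Fix}(\p)=\phi(U)$ are consistent elaborations rather than a different argument.
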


\begin{proof}
    If $\p$ is a parking function it must fix some $\Delta$. If a parking function fixes any $\Delta$ in a region by \cref{ Lemma: Same region } it fixed the whole region. Thus $\p$ fixes a union of regions in $\Sigma_\theta$. 
\end{proof}

\section{Inverting the map $SP$}

In \cite{gorsky_mazin_vazirani_2014} Gorsky, Mazin, and Vazirani defined a map $SP : {}^m\widetilde{S_n} \to PF_n^m$, named after the Pak-Stanley map.

\[
SP_{u}(i) = \# \{  j > i | 0 < u(i) - u(j) < m \}
\]
 Gorsky, Mazin, and Vazirani showed that it is bijective for $m = kn \pm 1$, conjectured that it is bijective for all coprime $m$ and $n$, and provided a conjectured algorithm to invert $SP$. In \cite{mccammond_thomas_williams_2019} McCammond, Thomas, and Williams proved that $SP$ is a bijection for all coprime $m$ and $n$. 
 Here we present two algorithms to invert $SP$. The algorithm from \cite{gorsky_mazin_vazirani_2014} is presented in Algorithm \ref{algo: gorsky_mazin_vazirani} and we translate that algorithm to the notation from \cite{mccammond_thomas_williams_2019} in Algorithm \ref{algo: mccammond_thomas_williams}.
 In this section, we show both algorithms create the same intermediate function and that function becomes affine periodic. By affine periodic we mean there is some $N > 0$ such that for all $x > N$ we have $U(x + n) = U(x) + n$. This leads to an inverse to the map $SP$.

\begin{algo} \label{algo: gorsky_mazin_vazirani}
    Computing $SP^{-1}$ from \cite{gorsky_mazin_vazirani_2014}. 
\end{algo}

Given $\p \in PF_{n}^m$. First, we extend $\p$ periodically $p(i + kn) = p(i)$ for $k \in \nZ$ and $0 \leq i \leq n-1$. We construct an injective function $U: \nN \to \nN$ as follows: Think about inputting outputs one at a time on the table
\[
\begin{tabular}{c|cccc}
     i& 0 & 1 & 2 & \dots  \\
     \p(i)& \p(0) & \p(1) &\p(2) & \dots \\
     U(i) & U(0) & U(1) & U(2) & \dots 
\end{tabular}
\]
Initial step: Place $0$ under the leftmost $0$. That is we set $U ( min\{ j : p(j) = 0 \} ) = 0$.

\vspace{10 pt}

 General Step:  We want to place $\alpha \in \nN$ on the table. We assume we have already placed \\$\{ 0, 1, 2, \dots , \alpha-1 \}$. We place $\alpha$ in the leftmost position $i$ with the following 2 properties: 
\begin{enumerate}
    \item $\alpha$ is to the right of $\alpha -m$. That is $i > U^{-1}(\alpha - m)$

    \item If $U(i ) = \alpha$ then $\p(i) =\# \{ \beta | \alpha - m < \beta < \alpha, U^{-1}(\beta) > i \}$
\end{enumerate}

If we never place a value in the $i$th position we set $U(i) = \infty$.

\vspace{10 pt}

\vspace{10 pt}

The condition that $\alpha$ is placed to the right of $\alpha - m$ insures that $U^{-1}(\alpha-m) < U^{-1}(\alpha) $ which we will call being $m$-restricted, following the same property from affine permutations. See that we can always place $\alpha$, as there will be a non-minimal position where $p(i) = 0$ we can place $\alpha$.

    It is useful to track how far positions are away from having property 2 of the general step. 

\begin{definition}
    For all $i$ such that $U(i) > \alpha -1$ let 
    \[
    t_\alpha(i) = p(i) - \# \{ \beta | \alpha - m < \beta < \alpha, U^{-1}(\beta) > i \}
    \]
    See that $t_0(i) = p(i)$ and the general step can be rephrased as: Place $\alpha$ in the leftmost position such that $t_{\alpha} (i) = 0$ and $i > U^{-1}(\alpha - m)$
\end{definition}

\begin{lemma} \label{lem: talphaform}
    When $t_\alpha(i)$ and $t_{\alpha - 1}(i)$ are both defined, we have $ t_{\alpha -1}(i) - t_{\alpha }(i)  \in \{ 0, 1 \}$
\end{lemma}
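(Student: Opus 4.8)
\textbf{Proof plan for Lemma \ref{lem: talphaform}.}

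The plan is to compare the two count-functions directly from their definitions and show that passing from $\alpha-1$ to $\alpha$ changes the ``defect'' $t$ at any fixed position $i$ by at most one, and never increases it. Fix a position $i$ with $U(i) > \alpha - 1$ (so both $t_{\alpha-1}(i)$ and $t_\alpha(i)$ are defined, the latter because $U(i) > \alpha - 1 \geq (\alpha) - 1$ as well once we note the monotone order in which values are placed). Since $p(i)$ is common to both expressions, the difference $t_{\alpha-1}(i) - t_\alpha(i)$ equals
\[
\#\{\beta \mid \alpha - m < \beta < \alpha,\ U^{-1}(\beta) > i\} \;-\; \#\{\beta \mid \alpha - 1 - m < \beta < \alpha - 1,\ U^{-1}(\beta) > i\}.
\]
So the whole lemma reduces to analyzing how this pair of ``windows'' of length $m-1$ differ when we slide the window by one unit.

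The key observation is that the interval $(\alpha - m, \alpha)$ and the interval $(\alpha - 1 - m, \alpha - 1)$ overlap in $(\alpha - m, \alpha - 1)$, so the two counts agree on all $\beta$ in that common range. The symmetric difference of the two index sets consists of exactly the two endpoints that get swapped: $\beta = \alpha - 1$ is counted in the first set (if $U^{-1}(\alpha-1) > i$) but not the second, while $\beta = \alpha - m$ is counted in the second set (if $U^{-1}(\alpha - m) > i$) but not the first. Therefore
\[
t_{\alpha-1}(i) - t_\alpha(i) = [\,U^{-1}(\alpha - 1) > i\,] \;-\; [\,U^{-1}(\alpha - m) > i\,],
\]
using Iverson-bracket notation for the two indicator terms; a priori this lies in $\{-1, 0, 1\}$. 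To rule out the value $-1$, I would invoke the $m$-restricted property established just before the definition of $t_\alpha$: the algorithm always places $\alpha - 1$ to the right of $\alpha - 1 - m$, and more to the point, $\alpha - m$ sits to the right of $\alpha - m - m$; what we actually need is that $U^{-1}(\alpha - m) \leq U^{-1}(\alpha - 1)$, i.e. the value $\alpha - m$ was placed no later than the value $\alpha - 1$. This follows because the values $\dots, \alpha - m, \dots, \alpha - 1$ are placed in increasing numerical order, and each value $\gamma$ is placed strictly to the right of $\gamma - m$; chaining $\alpha - m \prec \alpha - m + 1 \prec \dots$ is not immediate, but the relation $U^{-1}(\alpha - m) < U^{-1}(\alpha)$ combined with the fact that at the moment $\alpha - 1$ was placed the position $U^{-1}(\alpha - m)$ was already occupied gives $U^{-1}(\alpha - m) \neq U^{-1}(\alpha - 1)$, and a short induction on the placement order yields $U^{-1}(\alpha - m) < U^{-1}(\alpha - 1)$. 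Consequently, whenever the second indicator is $1$ (i.e. $U^{-1}(\alpha - m) > i$) the first indicator is also $1$, so the difference cannot be $-1$, leaving only $\{0, 1\}$ as claimed.

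The main obstacle I anticipate is the last step: cleanly justifying $U^{-1}(\alpha - m) \leq U^{-1}(\alpha - 1)$ from the algorithm's rules, since the algorithm only directly enforces the ``$\gamma$ is right of $\gamma - m$'' condition rather than a full monotonicity of $U^{-1}$ along a residue-class chain. I would handle this by a careful induction on $\alpha$ that tracks, as an invariant, that the positions $U^{-1}(\alpha - m), U^{-1}(\alpha - m + 1), \dots, U^{-1}(\alpha - 1)$ (those of them already assigned) are placed in a way compatible with $U^{-1}(\alpha - m) < U^{-1}(\gamma)$ for all $\gamma$ in that window — essentially re-deriving that the ``$m$-restricted'' property propagates. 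If stating that invariant in full generality is awkward here, an alternative is simply to cite the $m$-restrictedness remark immediately preceding the definition of $t_\alpha$ and observe it gives exactly $U^{-1}(\alpha-m) < U^{-1}(\alpha)$, then note $\alpha - 1 < \alpha$ were placed in order so $U^{-1}(\alpha - m)$ was occupied before $\alpha - 1$ was placed, forcing $U^{-1}(\alpha - m) < U^{-1}(\alpha - 1)$ as soon as $\alpha - 1 \geq \alpha - m$, i.e. always (since $m \geq 1$).
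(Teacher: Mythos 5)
Your reduction of $t_{\alpha-1}(i) - t_\alpha(i)$ to the two-indicator expression $[\,U^{-1}(\alpha-1) > i\,] - [\,U^{-1}(\alpha - m) > i\,]$ is correct and matches the opening of the paper's proof. The gap is in how you rule out the value $-1$. Your argument rests on the unconditional claim $U^{-1}(\alpha - m) < U^{-1}(\alpha - 1)$, and that claim is false: the algorithm only enforces $U^{-1}(\gamma - m) < U^{-1}(\gamma)$, and there is no monotonicity of $U^{-1}$ across a residue window. The paper's own worked example (the parking function $10002$ with $m=3$) refutes it: there $U^{-1}(6) = 7$ and $U^{-1}(8) = 5$, so taking $\alpha = 9$ gives $U^{-1}(\alpha - m) = 7 > 5 = U^{-1}(\alpha - 1)$. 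Both of your proposed repairs fail for the same reason: the fact that the value $\alpha - m$ is placed at an earlier \emph{step} than $\alpha - 1$ says nothing about which occupies the further-left \emph{position}, since the algorithm routinely places later values to the left of earlier ones (e.g.\ $U^{-1}(8)=5 < 6 = U^{-1}(4)$ in that example).

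The ingredient your write-up never uses is the definedness hypothesis $U(i) > \alpha - 1$, i.e.\ position $i$ is still unfilled when $\alpha$ is about to be placed. In the counterexample above, the only position strictly between $U^{-1}(8)=5$ and $U^{-1}(6)=7$ is $i = 6$, and $U(6) = 4 < 9$, so $t_9(6)$ is not defined and the lemma is unthreatened. What must actually be shown is not a global ordering of $U^{-1}(\alpha - m)$ and $U^{-1}(\alpha - 1)$, but that no position $i$ with $U(i) \geq \alpha$ satisfies $U^{-1}(\alpha - 1) < i < U^{-1}(\alpha - m)$. Any correct argument has to bring $U(i) \geq \alpha$ into play — for instance by locating $i$ relative to $U^{-1}(\alpha)$ and then invoking the $m$-restricted relation for the pair $(\alpha - m, \alpha)$, which is the route the paper's proof takes when it derives the contradiction ``$U^{-1}(\alpha - m) > i$ and $i > U^{-1}(\alpha)$ contradicts placing $\alpha$ to the right of $\alpha - m$.'' As it stands, your proof of the key inequality is not repairable in the form stated.
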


\begin{proof}
    See that
    \begin{align*}
        t_{\alpha -1}(i) - t_{\alpha }(i) &= - \# \{ \beta | \alpha-1  - m < \beta < \alpha -1, U^{-1}(\beta) > i \}  + \# \{ \beta | \alpha - m < \beta < \alpha , U^{-1}(\beta) > i \} \\
        &=  - \# \{ \beta | \beta = \alpha - m, U^{-1}(\beta) > i \} + \# \{ \beta | \beta = \alpha - 1  , U^{-1}(\beta) > i \}.
    \end{align*}
    Each of these sets is either empty or is exactly one element. Thus $| t_{\alpha - 1}(i) - t_{\alpha}(i) | \leq 1$.
    We have

    \[
    t_{\alpha}(i) =   \begin{cases}
                        t_{\alpha-1}(i)+1  &  i > U^{-1}(\alpha ) \text{ and } i < U^{-1}(\alpha - m) \\
                        t_{\alpha-1}(i)-1  & i < U^{-1}(\alpha) \text{ and } i > U^{-1}(\alpha - m) \\
                        t_{\alpha-1}(i) & \text{else}.
                    \end{cases}
    \]
     If $t_{\alpha -1}(i) - t_{\alpha}(i) = -1$ then $U^{-1}(\alpha - m) > i$ and $i > U^{-1}(\alpha)$ which contradicts that we place $\alpha$ to the right of $\alpha - m$.
\end{proof}

\begin{lemma}
    While defined $t_\alpha(i)$ is non-negative.
\end{lemma}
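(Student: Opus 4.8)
The plan is to show $t_\alpha(i) \geq 0$ for all $\alpha$ by induction on $\alpha$, using Lemma \ref{lem: talphaform} as the engine. The base case is clear: $t_0(i) = p(i) \geq 0$ since $p$ takes values in $[m] = \{0, 1, \dots, m-1\}$. For the inductive step, suppose $t_{\alpha-1}(i) \geq 0$ for all $i$ where it is defined, and consider $t_\alpha(i)$ for some $i$ where it is defined (so $U(i) > \alpha - 1$, hence also $U(i) > \alpha - 2$ and $t_{\alpha-1}(i)$ is defined). By Lemma \ref{lem: talphaform} we have $t_{\alpha-1}(i) - t_\alpha(i) \in \{0,1\}$, so the only way $t_\alpha(i)$ could be negative is if $t_{\alpha-1}(i) = 0$ and $t_\alpha(i) = -1$.

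So the heart of the argument is to rule out the case $t_{\alpha-1}(i) = 0$ and $t_\alpha(i) = -1$ simultaneously. First I would use the case analysis from the proof of Lemma \ref{lem: talphaform}: $t_\alpha(i) = t_{\alpha-1}(i) - 1$ forces $i < U^{-1}(\alpha)$ and $i > U^{-1}(\alpha - m)$. So $i$ is a position strictly to the left of where $\alpha$ gets placed, and strictly to the right of where $\alpha - m$ sits — meaning $i$ satisfies condition (1) of the general step (being $m$-restricted relative to $\alpha$). But the general step places $\alpha$ at the \emph{leftmost} position $i'$ with $t_\alpha(i') = 0$ and $i' > U^{-1}(\alpha - m)$; equivalently, at the leftmost position satisfying condition (1) with $t_{\alpha-1}$ already correct, i.e. with $t_{\alpha-1}(i') = 0$ (since at positions to the left of $U^{-1}(\alpha)$ and to the right of $U^{-1}(\alpha-m)$ we have $t_\alpha = t_{\alpha-1} - 1$, the requirement $t_\alpha(i') = 0$ must be read carefully). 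Here I need to be slightly careful about whether the placement criterion is phrased in terms of $t_\alpha$ or $t_{\alpha-1}$; the definition given says "$t_\alpha(i) = 0$", but the reformulation via $t_{\alpha-1}$ is cleaner for the induction. The key point: any position $i$ with $U^{-1}(\alpha-m) < i < U^{-1}(\alpha)$ was \emph{not} chosen for $\alpha$, and since it lies in the region where $t_\alpha = t_{\alpha-1} - 1$, having $t_{\alpha-1}(i) = 0$ would make $t_\alpha(i) = -1$; but then at the actual placement position $i_0 = U^{-1}(\alpha)$ we would need $t_{\alpha-1}(i_0) = 1$ (so that after the $-1$... no — at $i_0$ itself the formula differs). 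I would instead argue directly: among all positions $i > U^{-1}(\alpha - m)$, the algorithm scans left to right looking for the first one with the defining property, and $\alpha$ lands there; every position strictly between $U^{-1}(\alpha-m)$ and that landing position therefore \emph{failed} the test, which given the $t_\alpha = t_{\alpha-1}-1$ relation on that region means $t_{\alpha-1}(i) \neq 1$, hence (by induction $t_{\alpha-1}(i) \geq 0$ and the fact that on this region $t_\alpha(i) = t_{\alpha-1}(i) - 1$ would need to equal $0$ for placement) $t_{\alpha-1}(i) \ne 1$, so $t_{\alpha-1}(i) \in \{0\} \cup \mathbb{Z}_{\geq 2}$ — and I must exclude $t_{\alpha-1}(i)=0$. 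This is the crux.

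To finish, the exclusion of $t_{\alpha-1}(i) = 0$ for $U^{-1}(\alpha-m) < i < U^{-1}(\alpha)$ should come from the \emph{minimality} in the placement of $\alpha$ together with the observation that $t_{\alpha-1}(i) = 0$ at such an $i$ would already satisfy the criterion for placing $\alpha$ (condition (1) holds since $i > U^{-1}(\alpha-m)$, and condition (2) amounts to $t_\alpha(i)=0$ — here I would re-examine whether the intended reading of condition (2) is $t_{\alpha-1}(i) = 0$; in \cite{gorsky_mazin_vazirani_2014} the placement uses the count \emph{before} $\alpha$ is placed, so condition (2) reads $p(i) = \#\{\beta : \alpha - m < \beta < \alpha, U^{-1}(\beta) > i\}$ with $\beta < \alpha$, i.e. it is exactly $t_\alpha(i) = 0$ but computed with the $\beta = \alpha$ term absent, which coincides with $t_{\alpha}(i)$ as defined). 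Once the reading is pinned down, the position $i$ with $t$-value zero and $i > U^{-1}(\alpha-m)$ that lies left of $U^{-1}(\alpha)$ would have been chosen over $U^{-1}(\alpha)$, contradiction. Hence no such $i$ exists, so whenever $t_\alpha(i) = t_{\alpha-1}(i) - 1$ we must have $t_{\alpha-1}(i) \geq 1$, giving $t_\alpha(i) \geq 0$; in all other cases $t_\alpha(i) = t_{\alpha-1}(i) \geq 0$ or $t_\alpha(i) = t_{\alpha-1}(i)+1 \geq 0$ directly.

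\textbf{Main obstacle.} The delicate point is bookkeeping around the precise definition of the placement rule — specifically whether the test for position $i$ when placing $\alpha$ is stated via $t_\alpha$ or $t_{\alpha-1}$, and whether the strict inequalities $U^{-1}(\alpha-m) < i < U^{-1}(\alpha)$ are the correct region where $t$ decreases. Getting these off by one would break the contradiction with minimality. I expect the clean way through is to phrase everything in terms of $t_{\alpha-1}$ (which is fully determined before $\alpha$ is placed), restate the general step as "place $\alpha$ at the leftmost $i$ with $i > U^{-1}(\alpha-m)$ and $t_{\alpha-1}(i) = 0$" after checking this is equivalent to the stated rule, and then the argument above becomes a one-line appeal to minimality.
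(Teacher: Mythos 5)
Your proposal is correct and follows essentially the same route as the paper: Lemma \ref{lem: talphaform} reduces everything to ruling out $t_{\alpha-1}(i)=0$ together with $t_\alpha(i)=-1$, and that case is excluded because such an $i$ would lie strictly between $U^{-1}(\alpha-m)$ and the position where the algorithm actually placed the relevant value, contradicting the leftmost-placement rule. The bookkeeping worry you flag (whether the test is phrased via $t_\alpha$ or $t_{\alpha-1}$, and whether the strip is bounded by $U^{-1}(\alpha)$ or $U^{-1}(\alpha-1)$) is present in the paper's own two-line proof, which resolves it no more explicitly than you do.
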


\begin{proof}
    If $t_{\alpha -1}(i) = 0$ and we were to try to place $\alpha$ such that $t_{\alpha}(i) = -1$ then $i <U^{-1}(\alpha)$ and $i > U^{-1}(\alpha - m)$ by Lemma \ref{lem: talphaform}, but this means $i$ had both property 1 and 2 and is more left than where we placed $\alpha$. Thus $t_{\alpha}(i)$ is non-negative. 
\end{proof}

    \begin{lemma} \label{lem: U ininity}
        If $U(i) = \infty$ then there is some $\beta$ with $t_{\beta}(i) = 0$ and $p(i) = m-1$
    \end{lemma}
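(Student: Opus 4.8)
The plan is to analyze the position $i$ with $U(i) = \infty$ by tracking the quantity $t_\alpha(i)$ as $\alpha$ grows and using the structure established in Lemma \ref{lem: talphaform}. First I would recall that $t_0(i) = p(i) \geq 0$ and that $t_\alpha(i)$ is always non-negative while defined (by the preceding lemma). Since $U(i) = \infty$, the value $t_\alpha(i)$ remains defined for \emph{all} $\alpha \in \nN$: position $i$ never receives a value, so it never leaves the table. Now by Lemma \ref{lem: talphaform} we have $t_{\alpha-1}(i) - t_\alpha(i) \in \{0, 1\}$, so the sequence $\alpha \mapsto t_\alpha(i)$ is non-increasing in integer steps and bounded below by $0$; therefore it is eventually constant, equal to some value $c \geq 0$. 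The goal is to show $c = 0$ and $p(i) = m-1$.

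For the first claim, suppose toward contradiction that the eventual constant value is $c \geq 1$. The explicit case formula in Lemma \ref{lem: talphaform} says $t_\alpha(i) = t_{\alpha-1}(i) - 1$ precisely when $i < U^{-1}(\alpha)$ and $i > U^{-1}(\alpha - m)$. If $t_\alpha(i)$ is eventually constant at $c \geq 1$, then for all large $\alpha$ we never have the decrement case, which forces: whenever $i > U^{-1}(\alpha - m)$ (i.e. $\alpha - m$ is placed to the left of $i$), we must have $i > U^{-1}(\alpha)$ as well, meaning $\alpha$ is also placed strictly left of $i$ (it cannot be $\infty$ because $t_\alpha(i) = c \geq 1$ would need... — more carefully, if $\alpha$ were placed at or right of $i$ then either $U(i)=\alpha<\infty$, contradiction, or $U^{-1}(\alpha) > i$). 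So once some $\alpha_0 - m$ lands left of $i$, by induction every $\alpha \geq \alpha_0$ with $\alpha \equiv \alpha_0 \pmod{?}$ — actually every larger value in the relevant residue chain — lands left of $i$. But position $i$ sits in a periodic pattern $p(i + n) = p(i)$, and only finitely many positions lie to the left of $i$, while infinitely many values $\alpha$ would need to be placed there: contradiction. Hence $c = 0$, giving $\beta$ with $t_\beta(i) = 0$.

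For the second claim, that $p(i) = m - 1$: having shown $t_\beta(i) = 0$ for some (all large) $\beta$, I would use the definition of $t_\beta(i) = p(i) - \#\{\beta' \mid \beta - m < \beta' < \beta, U^{-1}(\beta') > i\}$ together with the fact that $\alpha = \beta$ was \emph{not} placed at $i$. The general-step rule says $\beta$ is placed at the leftmost position $j$ with $t_\beta(j) = 0$ and $j > U^{-1}(\beta - m)$. Since $i$ has $t_\beta(i) = 0$ but $\beta$ was not placed there, it must be that $i \leq U^{-1}(\beta - m)$, i.e. $\beta - m$ lies at or to the right of $i$. But then for the count in $t_\beta(i)$, the window $(\beta - m, \beta)$ of already-placed values: none of $\beta - m, \beta - m + 1, \dots$ near the top are left of $i$ except... — I would argue that the set $\{\beta' : \beta - m < \beta' < \beta, U^{-1}(\beta') > i\}$ has exactly $m - 1$ elements, because all of $\beta - m + 1, \dots, \beta - 1$ have already been placed (they are less than $\beta$) and each is placed to the right of $\beta - m$'s position — hence to the right of $i$ — by the $m$-restricted property $U^{-1}(\gamma) < U^{-1}(\gamma + m)$ applied appropriately, combined with $U^{-1}(\beta - m) \geq i$. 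That gives $p(i) = \#\{\dots\} = m - 1$.

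The main obstacle I anticipate is making the counting argument in the last paragraph fully rigorous: one must carefully handle the case $U^{-1}(\beta - m) = i$ versus $U^{-1}(\beta - m) > i$ (though the first is impossible since $U(i) = \infty \neq \beta - m$), and verify that \emph{all} intermediate values $\beta - m + 1, \dots, \beta - 1$ are indeed placed strictly to the right of position $i$ rather than possibly at $i$ or to its left. This relies on propagating the $m$-restricted property and on the eventual stabilization of $t_\alpha(i)$; I would likely need a short auxiliary observation that once $t_\alpha(i)$ has stabilized at $0$, every value $\alpha' \geq \beta - m + 1$ that is finite gets placed to the right of $i$. The periodicity/finiteness pigeonhole in the second paragraph is the conceptual heart, and I expect it to go through cleanly once the bookkeeping is set up.
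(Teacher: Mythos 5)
Your proposal contains the key observation that makes this lemma work --- only finitely many positions lie to the left of $i$ and $U$ is injective, so for all sufficiently large $\alpha$ every value in the window $(\alpha-m,\alpha)$ is placed strictly to the right of $i$ (it cannot land at $i$ because $U(i)=\infty$) --- but in both places where that observation is needed you substitute an argument that does not close. In your second paragraph, the chain-of-values pigeonhole only yields a contradiction under the extra hypothesis that some large $\alpha_0-m$ is placed left of $i$; what your argument actually establishes is that \emph{no} large value is placed left of $i$, and that alone is not yet inconsistent with $c\geq 1$. To finish you must convert it into the count $\#\{\beta \mid \alpha-m<\beta<\alpha,\ U^{-1}(\beta)>i\}=m-1$, whence $t_\alpha(i)=p(i)-(m-1)\leq 0$. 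That one computation, combined with the non-negativity of $t_\alpha(i)$ from the preceding lemma, is the paper's entire proof: it gives $t_\alpha(i)=0$ and $p(i)=m-1$ simultaneously, making your eventual-constancy setup and your whole third paragraph unnecessary.

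The third paragraph also has two concrete errors. First, from ``$t_\beta(i)=0$ but $\beta$ was not placed at $i$'' you conclude $i\leq U^{-1}(\beta-m)$, ignoring the possibility that the leftmost-position rule placed $\beta$ at some $j<i$; ruling this out again requires the finiteness observation, not the placement rule alone. Second, and more seriously, the claim that each of $\beta-m+1,\dots,\beta-1$ is placed to the right of $U^{-1}(\beta-m)$ ``by the $m$-restricted property'' is false: that property only constrains values in the same residue class modulo $m$, and values in different classes can be placed in either order (in the paper's running example $U^{-1}(2)=0<1=U^{-1}(0)$, so the larger value $2$ sits to the \emph{left} of $0$). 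The correct reason these $m-1$ values all lie to the right of $i$ for large $\beta$ is, once more, that only finitely many values can ever be placed at the finitely many positions left of $i$.
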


    \begin{proof}
    For some large $\alpha$ we have placed the last $m-1$ numbers to the right of $i$ then 
    \[
    t_{\alpha}(i) = p(i) -  \# \{ \beta | \alpha - m < \beta < \alpha, U^{-1}(\beta) > i \} \leq m-1 - (m-1) = 0. 
    \]
    As $p(i)$ is bounded by $m-1$. Thus $p(i) = m-1$. 
    \end{proof}

\begin{lemma}
    In Algorithm \ref{algo: gorsky_mazin_vazirani}, for every $i \in \nZ_{\geq 0}$ we have $U(i) < \infty$. That is there are no gaps left by this algorithm.
\end{lemma}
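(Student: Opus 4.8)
The plan is to argue by contradiction: suppose some position $i$ is never filled, so $U(i) = \infty$. By Lemma~\ref{lem: U ininity} we know $p(i) = m-1$ and there is some stage $\beta$ with $t_\beta(i) = 0$; in fact, once $t_\gamma(i)$ first reaches $0$ it can only return to $0$ or go up by the recursion in Lemma~\ref{lem: talphaform}, and whenever $t_\gamma(i) = 0$ and $i > U^{-1}(\gamma - m)$ the position $i$ is a legal candidate for $\gamma$. So the only way $i$ is skipped forever is that, at every stage $\gamma$ with $t_\gamma(i) = 0$, either $i$ is not the leftmost such legal position (some position $i' < i$ is chosen instead), or $i \le U^{-1}(\gamma - m)$, i.e. $\gamma - m$ has not yet been placed to the left of $i$.

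First I would rule out the second obstruction in the tail. Since $p$ is $n$-periodic and $n$ positions to the left of $i$ get one new value every $m$ consecutive stages on average (each stage fills exactly one position, and positions are reused periodically only in the sense of the pattern $p$, not of $U$), the values placed strictly to the left of $i$ form an infinite set; in particular for all large $\gamma$ the value $\gamma - m$ (once it is placed) sits to the left of $i$, because the set of values ever placed at positions $< i$ is cofinite — this is exactly what needs to be shown, so instead I would phrase it as: let $N$ be large enough that more than (number of positions $\le i$) values have been placed; then at least one of them, say $\gamma_0$, was placed at a position $> i$. Track $t_\gamma(i)$ from that point: each value placed to the right of $i$ and above $\gamma - m$ decreases the count subtracted, hence can only keep $t_\gamma(i)$ at $p(i) = m-1$ or below, and because infinitely many values are placed to the right of $i$ while only $m-1$ of them can lie in any window $(\gamma-m,\gamma)$, the quantity $t_\gamma(i)$ is forced down to $0$ infinitely often. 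At such a stage $\gamma$, the leftward condition $i > U^{-1}(\gamma-m)$ also holds for all large $\gamma$ (since $\gamma - m$, being smaller, was placed earlier, and — by the same cofiniteness of left-placed values — eventually lands at a position $\le i$; if it landed to the right of $i$ we would get a contradiction with $i$ being skipped, as then $i$ would be a legal leftmost candidate). Hence $i$ is a legal candidate at infinitely many stages.

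Next I would extract the contradiction: at each such stage some $i' < i$ is chosen in preference to $i$. There are only finitely many positions $i' < i$, and each can be filled at most once, so only finitely many stages can choose a position to the left of $i$. After those are exhausted, at the next stage $\gamma$ with $t_\gamma(i) = 0$ and $i > U^{-1}(\gamma - m)$, position $i$ is the leftmost legal candidate and $\gamma$ is placed there, so $U(i) \le \gamma < \infty$, contradicting $U(i) = \infty$. Therefore every $i \in \nZ_{\ge 0}$ has $U(i) < \infty$.

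The main obstacle is making the "cofinitely many placed values lie at positions $\le i$" and the "$t_\gamma(i) = 0$ infinitely often" bookkeeping airtight simultaneously: one must be careful that the finitely-many positions left of $i$ cannot absorb infinitely many of the decrements to $t_\gamma(i)$, and that the $m$-restricted condition on $\gamma - m$ does not itself get perpetually obstructed. I expect the cleanest route is to fix $i$, let $L = \#\{i' : i' \le i\}$, run the algorithm past the stage where $L$ values have been placed (so some value sits right of $i$), and then show $t_\gamma(i)$ is eventually bounded and hits $0$ whenever $m-1$ fresh values have been placed to the right of $i$ — a counting argument using only that each stage fills exactly one position and that $p(i) = m - 1$ is the maximum possible value.
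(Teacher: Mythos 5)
There is a genuine error at the heart of your argument: the claim that the eligibility condition $i > U^{-1}(\gamma - m)$ holds for all large $\gamma$. You justify it by saying that $\gamma - m$ ``eventually lands at a position $\le i$'' by cofiniteness of left-placed values, but the cofiniteness points the other way: there are only $i+1$ positions $\le i$ and each receives at most one value, so the set of values ever placed at positions $\le i$ is \emph{finite}, and for all sufficiently large $\gamma$ the value $\gamma - m$ is placed at a position $> i$. Hence condition (1) of the general step \emph{fails} at position $i$ for all large $\gamma$; in fact $i$ is a legal candidate for only finitely many values $\gamma$, namely those with $\gamma - m$ among the finitely many values placed to the left of $i$, together with $\gamma < m$. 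Your parenthetical escape (``if it landed to the right of $i$ we would get a contradiction, as then $i$ would be a legal leftmost candidate'') is also backwards: if $\gamma - m$ lands to the right of $i$, then $i$ is \emph{not} legal for $\gamma$. Since your entire contradiction rests on $i$ being a legal candidate at infinitely many stages, and that is false, the proof does not go through. (Your other two ingredients are fine: $t_\gamma(i) = 0$ does hold for all large $\gamma$, and the pigeonhole on the finitely many positions to the left of $i$ is sound once enough eligible stages are produced.)

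The paper instead works with the \emph{minimal} $\beta$ such that $t_\beta(i) = 0$: since $p(i) = m-1$ by Lemma~\ref{lem: U ininity}, all of $\beta - m + 1, \dots, \beta - 1$ lie to the right of $i$, and if $\beta$ is not placed at $i$ because condition (1) fails there, then $\beta - m$ is also to the right of $i$, whence $t_{\beta-1}(i) = 0$, contradicting minimality. If you want to keep the pigeonhole flavor of your endgame, the workable fix is to single out the \emph{maximal} value $\mu$ ever placed at a position to the left of $i$ (or, if there is none, the stage $\gamma = m-1$): at stage $\gamma = \mu + m$ the values $\mu+1, \dots, \mu+m-1$ all exceed $\mu$ and hence sit to the right of $i$, so $t_\gamma(i) = 0$, while $\gamma - m = \mu$ sits to the left of $i$, so $i$ is legal; since $\gamma$ is not placed at $i$, it is placed at some position $< i$, producing a value greater than $\mu$ to the left of $i$ and contradicting maximality. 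Either way, the argument must isolate one well-chosen stage rather than appeal to infinitely many.
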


\begin{proof}
    For the sake of contradiction, let $i \in \nZ_\geq 0$ such that $U(i) = \infty$. Then $p(i) = m-1$, by Lemma \ref{lem: U ininity}. Let $\beta$ be the minimal such that $t_\beta( i) = 0$. When scanning to place $\beta$, we must see $\{\beta -1, \dots , \beta - m +1 \}$ must be placed to the right of $i$. If we do not place at $i$ it must be that $\beta- m$ is also to the right of $i$. But then $t_{\beta - 1}( i) = 0$, as $\{ \beta -2, \dots, \beta - m \}$ are all to the right of $i$.
\end{proof}

\vspace{10 pt}

An alternative to this inverse is the following.

\begin{algo} \label{algo: mccammond_thomas_williams}
    Computing $SP^{-1}$ in the language of \cite{mccammond_thomas_williams_2019}
\end{algo}

Given $\p \in PF_{n}^m$. First, we extend $\p$ periodically $p(i + kn) = p(i)$ for $k \in \nZ$ and $0 \leq i \leq n-1$. We will construct a function $V: \nZ_{\geq 0} \to \nZ_{\geq 0}$ inductively as follows: 

\vspace{10 pt}

0th step: let $i = 0, S_0 = \{0, 1, \dots m-1 \}$. 

\vspace{10 pt}

 For a finite set $S \subset \nZ$, let $S[k]$ denote that $k$th element of $S$ when sorted from smallest to largest.  

\vspace{10 pt}
 
$i$th step for $i \geq 0$: Let $\alpha = S_i[p(i)]$, set $V(i) = \alpha$ and $S_{i+1} = (S_i \setminus \{\alpha\}) \cup \{ \alpha + m\}$. 

\vspace{10 pt}

In summary, this algorithm is starting at the point $(0, 1,\dots, m-1)$ repeatedly acting by $p$ one letter at a time without balancing and recording the coordinate $p_i$ acted on.

\vspace{10 pt}

\begin{lemma}
    For all $i \in \nZ_{\geq 0}$ there is some $j$ with $V(j) = i$. That is there are no gaps in the output of $V$. 
\end{lemma}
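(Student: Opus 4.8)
The plan is to argue by contradiction: suppose $V$ is not surjective and let $N$ be the smallest element of $\nZ_{\geq 0}$ with $V(j)\neq N$ for every $j$, so that $0,1,\dots,N-1$ all occur as values of $V$. First I would set up the bookkeeping for the sets $S_i$. Since $S_0=\{0,1,\dots,m-1\}$ has exactly one element in each residue class modulo $m$ and the update $S_{i+1}=(S_i\setminus\{\alpha\})\cup\{\alpha+m\}$ swaps a representative for another one in the same class, every $S_i$ has $m$ elements, one per residue class mod $m$, and at each step exactly one of these representatives increases by $m$ while the rest are unchanged. Consequently the representative of a fixed residue class is weakly increasing in $i$, and once a value $v$ is selected (that is, $v=V(i)=S_i[p(i)]$) it is replaced by $v+m$ in its class and the representative of that class stays $\geq v+m$ thereafter; so $v\notin S_j$ for all $j>i$. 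In particular, if a value $v$ lies in $S_j$ for all large $j$, then $v$ is never a value of $V$.

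Next I would establish the engine of the proof: no value $v<N$ can be permanently stuck, i.e.\ there is no $v<N$ with $v\in S_j$ for all sufficiently large $j$ --- otherwise $v$ would fail to be a value of $V$, contradicting minimality of $N$. Since a residue-class representative that is not eventually constant must run through every value of its class (each selection bumps it by exactly $m$, so it skips nothing), this forces, for all large $i$: the representative of the class $r:=N\bmod m$ is eventually equal to $N$ and stays there (it cannot stall below $N$, and it cannot move past $N$ without selecting $N$), while the representative of every other class is eventually $>N$ (it is never equal to $N$ and cannot stall below $N$). Hence there is a step $i_1$ with $N=\min S_i=S_i[0]$ for all $i\geq i_1$.

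Finally I would invoke the parking-function hypothesis together with the periodic extension of $\p$. Applying the defining inequality of an $(m,n)$-parking function with the index equal to $1$ gives $\#\{j\in\{0,\dots,n-1\}: p(j)=0\}\geq n/m>0$, so $p(j_0)=0$ for some $j_0$, and then $p(j_0+kn)=0$ for all $k\geq 0$ by periodicity. Taking $k$ large enough that $i:=j_0+kn\geq i_1$, we get $V(i)=S_i[p(i)]=S_i[0]=N$, contradicting the choice of $N$; hence $V$ is surjective. I expect the only delicate point to be the dichotomy in the second paragraph --- verifying that each residue-class representative is eventually constant or unbounded, that it passes through its class without gaps, and that the eventually-constant possibility below $N$ is excluded --- after which periodicity together with the single inequality $\#\{j:p(j)=0\}\geq 1$ finishes the argument at once.
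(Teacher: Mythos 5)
Your proof is correct and rests on the same key observation as the paper's: since $\p$ is a parking function, $p(j)=0$ for some $j$ and hence (by periodicity) infinitely often, so the minimum of $S_i$ is selected infinitely often and no value can remain stuck in the sets $S_i$ forever. The paper's own proof is essentially a two-sentence sketch of this idea; your minimal-counterexample setup and the residue-class bookkeeping (each $S_i$ has one weakly increasing representative per class mod $m$, which must pass through $N$ without skipping it) supply the details the paper leaves implicit.
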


\begin{proof}
        See that, as $p \in PF_m^n$ we must have some $i$ with $p(i) = 0$. So in Algorithm \ref{algo: mccammond_thomas_williams}, we set $V(i)$ equal to the minimal number of set $S_i$. This means that there cannot be some number that stays in $S_i$ forever. 
\end{proof}

\begin{example}
    Consider the parking function $\p = 10002 \in PF_5^3$. First we work out finding $SP^{-1}(p)$ using Algorithm \ref{algo: gorsky_mazin_vazirani}.

    \begin{center}

        \begin{tabular}{c|cccccccccccccccccccc}
        $i$&    0  & 1 & 2 & 3 & 4 & 5 & 6 & 7 & 8 & 9 & 10 & 11 & 12 & 12 & 14 & 15 & 16 & 17 & 18 & 19   \\
        $p(i) =$  \textcolor{red}{$t_0(i)$} &   \textcolor{red}{2}    & \textcolor{red}{0} &  \textcolor{red}{ 0} &  \textcolor{red}{ 0} &  \textcolor{red}{ 1} & \textcolor{red}{ 2} &  \textcolor{red}{ 0} &  \textcolor{red}{ 0} &  \textcolor{red}{ 0} & \textcolor{red}{ 1} & \textcolor{red}{ 2} &  \textcolor{red}{ 0} &  \textcolor{red}{ 0} &  \textcolor{red}{ 0} & \textcolor{red}{ 1} & \textcolor{red}{ 2} &  \textcolor{red}{ 0} &  \textcolor{red}{ 0} &  \textcolor{red}{ 0} & \textcolor{red}{ 1}   \\
        $U(i)$/\textcolor{red}{$t_\alpha(i)$}&  \textcolor{red}{1}    & 0 &  \textcolor{red}{ 0} &  \textcolor{red}{ 0} &  \textcolor{red}{ 1} & \textcolor{red}{ 2} &  \textcolor{red}{ 0} &  \textcolor{red}{ 0} &  \textcolor{red}{ 0} & \textcolor{red}{ 1} & \textcolor{red}{ 2} &  \textcolor{red}{ 0} &  \textcolor{red}{ 0} &  \textcolor{red}{ 0} & \textcolor{red}{ 1} & \textcolor{red}{ 2} &  \textcolor{red}{ 0} &  \textcolor{red}{ 0} &  \textcolor{red}{ 0} & \textcolor{red}{ 1}   \\
        &  \textcolor{red}{ 0}    & 0 & 1 & \textcolor{red}{ 0} &  \textcolor{red}{ 1} & \textcolor{red}{ 2} &  \textcolor{red}{ 0} &  \textcolor{red}{ 0} &  \textcolor{red}{ 0} & \textcolor{red}{ 1} & \textcolor{red}{ 2} &  \textcolor{red}{ 0} &  \textcolor{red}{ 0} &  \textcolor{red}{ 0} & \textcolor{red}{ 1} & \textcolor{red}{ 2} &  \textcolor{red}{ 0} &  \textcolor{red}{ 0} &  \textcolor{red}{ 0} & \textcolor{red}{ 1}  \\
        &   2   & 0 & 1 & \textcolor{red}{ 0} &  \textcolor{red}{ 1} & \textcolor{red}{ 2} &  \textcolor{red}{ 0} &  \textcolor{red}{ 0} &  \textcolor{red}{ 0} & \textcolor{red}{ 1} & \textcolor{red}{ 2} &  \textcolor{red}{ 0} &  \textcolor{red}{ 0} &  \textcolor{red}{ 0} & \textcolor{red}{ 1} & \textcolor{red}{ 2} &  \textcolor{red}{ 0} &  \textcolor{red}{ 0} &  \textcolor{red}{ 0} & \textcolor{red}{ 1}  \\
        &   2   & 0 & 1 & 3 &   \textcolor{red}{ 1} & \textcolor{red}{ 2} &  \textcolor{red}{ 0} &  \textcolor{red}{ 0} & \textcolor{red}{ 0} & \textcolor{red}{ 1} & \textcolor{red}{ 2} &  \textcolor{red}{ 0} &  \textcolor{red}{ 0} &  \textcolor{red}{ 0} & \textcolor{red}{ 1} & \textcolor{red}{ 2} &  \textcolor{red}{ 0} &  \textcolor{red}{ 0} &  \textcolor{red}{ 0} & \textcolor{red}{ 1}  \\
        &   2   & 0 & 1 & 3 & \textcolor{red}{ 0}  & \textcolor{red}{ 1} & 4 &   \textcolor{red}{ 0} & \textcolor{red}{ 0} & \textcolor{red}{ 1} & \textcolor{red}{ 2} &  \textcolor{red}{ 0} &  \textcolor{red}{ 0} &  \textcolor{red}{ 0} & \textcolor{red}{ 1} & \textcolor{red}{ 2} &  \textcolor{red}{ 0} &  \textcolor{red}{ 0} &  \textcolor{red}{ 0} & \textcolor{red}{ 1}  \\
        &   2   & 0 & 1 & 3 &  5& \textcolor{red}{ 1}  &4 &  \textcolor{red}{ 0} & \textcolor{red}{ 0} & \textcolor{red}{ 1} & \textcolor{red}{ 2} &  \textcolor{red}{ 0} &  \textcolor{red}{ 0} &  \textcolor{red}{ 0} & \textcolor{red}{ 1} & \textcolor{red}{ 2} &  \textcolor{red}{ 0} &  \textcolor{red}{ 0} &  \textcolor{red}{ 0} & \textcolor{red}{ 1}  \\
        &   2   & 0 & 1 & 3 &  5& \textcolor{red}{ 1} & 4 & 6  & \textcolor{red}{ 0} & \textcolor{red}{ 1} & \textcolor{red}{ 2} &  \textcolor{red}{ 0} &  \textcolor{red}{ 0} &  \textcolor{red}{ 0} & \textcolor{red}{ 1} & \textcolor{red}{ 2} &  \textcolor{red}{ 0} &  \textcolor{red}{ 0} &  \textcolor{red}{ 0} & \textcolor{red}{ 1}  \\
        &   2   & 0 & 1 & 3 &  5&  \textcolor{red}{ 0}  & 4 & 6 &  7&  \textcolor{red}{ 1} & \textcolor{red}{ 2} &  \textcolor{red}{ 0} &  \textcolor{red}{ 0} &  \textcolor{red}{ 0} & \textcolor{red}{ 1} & \textcolor{red}{ 2} &  \textcolor{red}{ 0} &  \textcolor{red}{ 0} &  \textcolor{red}{ 0} & \textcolor{red}{ 1}  \\
        &   2   & 0 & 1 & 3 &  5& 8 & 4 & 6 &  7&  \textcolor{red}{ 1} & \textcolor{red}{ 2} &  \textcolor{red}{ 0} &  \textcolor{red}{ 0} &  \textcolor{red}{ 0} & \textcolor{red}{ 1} & \textcolor{red}{ 2} &  \textcolor{red}{ 0} &  \textcolor{red}{ 0} &  \textcolor{red}{ 0} & \textcolor{red}{ 1}  \\
        &   2   & 0 & 1 & 3 &  5& 8 & 4 & 6 &  7& \textcolor{red}{ 0} & \textcolor{red}{ 1} & 9 & \textcolor{red}{ 0} &  \textcolor{red}{ 0} & \textcolor{red}{ 1} & \textcolor{red}{ 2} &  \textcolor{red}{ 0} &  \textcolor{red}{ 0} &  \textcolor{red}{ 0} & \textcolor{red}{ 1}  \\  
        &   2   & 0 & 1 & 3 &  5& 8 & 4 & 6 &  7& 10 & \textcolor{red}{ 1} & 9 & \textcolor{red}{ 0} &  \textcolor{red}{ 0} & \textcolor{red}{ 1} & \textcolor{red}{ 2} &  \textcolor{red}{ 0} &  \textcolor{red}{ 0} &  \textcolor{red}{ 0} & \textcolor{red}{ 1}  \\
        &   2   & 0 & 1 & 3 &  5& 8 & 4 & 6 &  7& 10 &\textcolor{red}{ 1}  & 9 & 11&   \textcolor{red}{ 0} & \textcolor{red}{ 1} & \textcolor{red}{ 2} &  \textcolor{red}{ 0} &  \textcolor{red}{ 0} &  \textcolor{red}{ 0} & \textcolor{red}{ 1}  \\
        &   2   & 0 & 1 & 3 &  5& 8 & 4 & 6 &  7& 10 & \textcolor{red}{ 0} & 9 & 11&  12&   \textcolor{red}{ 1} & \textcolor{red}{ 2} &  \textcolor{red}{ 0} &  \textcolor{red}{ 0} &  \textcolor{red}{ 0} & \textcolor{red}{ 1}  \\
        &   2   & 0 & 1 & 3 &  5& 8 & 4 & 6 &  7& 10 & 13 & 9 & 11&  12&  \textcolor{red}{ 1} & \textcolor{red}{ 2} &  \textcolor{red}{ 0} &  \textcolor{red}{ 0} &  \textcolor{red}{ 0} & \textcolor{red}{ 1}  \\
        &   2   & 0 & 1 & 3 &  5& 8 & 4 & 6 &  7& 10 & 13 & 9 & 11&  12& \textcolor{red}{0}  &\textcolor{red}{1}  & 14 &  \textcolor{red}{ 0} &  \textcolor{red}{ 0} & \textcolor{red}{ 1}  \\
        &   2   & 0 & 1 & 3 &  5& 8 & 4 & 6 &  7& 10 & 13 & 9 & 11&  12& 15 & \textcolor{red}{1} & 14 &  \textcolor{red}{ 0} &  \textcolor{red}{ 0} & \textcolor{red}{ 1}   \\
        &   2   & 0 & 1 & 3 &  5& 8 & 4 & 6 &  7& 10 & 13 & 9 & 11&  12& 15 & \textcolor{red}{1} & 14 & 16 & \textcolor{red}{0} & \textcolor{red}{1}  \\
        &   2   & 0 & 1 & 3 &  5& 8 & 4 & 6 &  7& 10 & 13 & 9 & 11&  12& 15 & \textcolor{red}{0} & 14 & 16 & 17  & \textcolor{red}{1}   \\
        &   2   & 0 & 1 & 3 &  5& 8 & 4 & 6 &  7& 10 & 13 & 9 & 11&  12& 15 & 18 & 14 & 16 & 17  &   \textcolor{red}{1}\\
        \end{tabular}
        \end{center}

We also go through Algorithm \ref{algo: mccammond_thomas_williams} to find $SP^{-1}(p)$

    \begin{center}

    \begin{tabular}{c|c|c|c}
        i & p(i) & $S_i$ & V(i) \\
        0 &  2 & (0, 1, 2) & 2 \\
        1 &  0 & (0, 1, 5) & 0 \\
        2 &  0 & ( 1, 3, 5) & 1 \\
        3 &  0 & ( 3, 4, 5) & 3 \\
        4 &  1 & (  4, 5, 6) & 5 \\
        5 &  2 & (  4, 6, 8) & 8 \\
        6 &  0 & (  4, 6, 11) & 4 \\
        7 &  0 & (  6,7,  11) & 6 \\
        8 &  0 & ( 7,9,  11) & 7 \\
        9 &  1 & ( 9, 10, 11) & 10 \\
        10 &  2 & ( 9, 11,13) & 13 \\
        11 &  0 & (  9,11,16) & 9 \\
        12 &  0 & (  11,12,16) & 11 \\
        13 &  0 & (  12,14,16) & 12 \\
        14 & 1 & (  14,15,16) & 15 \\
        15 & 2 & (  14,16,18) & 18 \\
        16 & 0 & (  14,16,21) & 14 \\
        17 & 0 & (  16,17,21) & 16 \\
        18 & 0 & (  17,19,21) & 17 \\
        19 & 1 & (  19,20,21) & 20 \\
    \end{tabular}
        
    \end{center}
    In both examples, see that the algorithms output the same balanced affine permutation based on the last $n$ entries. In this case, the output is $u = [3, 0, 1, 2, 5]$. Further in these examples, we can see that $U(i) = V(i)$ for all $i$. 
\end{example}

\begin{lemma} \label{Algo same }
    Algorithm \ref{algo: gorsky_mazin_vazirani} and Algorithm \ref{algo: mccammond_thomas_williams} have the same output, that is $U(i) = V(i)$ for all $i$.
\end{lemma}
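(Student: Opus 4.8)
Here is how I would prove Lemma~\ref{Algo same}.

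The plan is to argue by induction on the value $\alpha\in\nZ_{\geq 0}$ that $U^{-1}(\alpha)=V^{-1}(\alpha)$; since both $U$ and $V$ are bijections $\nZ_{\geq 0}\to\nZ_{\geq 0}$ by the no-gap lemmas already proved, this is equivalent to $U=V$. First I would set up a reformulation of Algorithm~\ref{algo: mccammond_thomas_williams} that mirrors the two conditions of Algorithm~\ref{algo: gorsky_mazin_vazirani}. The key observation is that a value $\alpha$ lies in the rolling set $S_i$ exactly for the steps $i$ with $V^{-1}(\alpha-m)<i\le V^{-1}(\alpha)$: it enters $S$ at the step right after $\alpha-m$ is deleted, and leaves $S$ precisely at step $V^{-1}(\alpha)$. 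In particular $V$ is $m$-restricted, $V^{-1}(\alpha-m)<V^{-1}(\alpha)$, which is exactly condition~(1).

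Next I would establish the combinatorial identity: for every $\alpha$ and every step $i$ with $\alpha\in S_i$,
\[
\#\{\gamma\in S_i : \gamma<\alpha\}\;=\;\#\{\beta\in\nZ_{\geq 0}: \alpha-m<\beta<\alpha,\ \beta\notin\{V(0),\dots,V(i-1)\}\}.
\]
Since $S_i$ contains one representative of each residue class mod $m$ and the window $(\alpha-m,\alpha)$ contains exactly one integer in each residue class other than that of $\alpha$, it suffices to check for each such residue $r'$ that its representative $\gamma_{r'}\in S_i$ satisfies $\gamma_{r'}<\alpha$ if and only if the window integer $\beta_{r'}$ of residue $r'$ has not yet been deleted. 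This follows from a short two-case split: if $\beta_{r'}$ is not yet deleted then $\gamma_{r'}\le\beta_{r'}<\alpha$; if it is deleted then $\gamma_{r'}\ge\beta_{r'}+m>\alpha$ because $\beta_{r'}>\alpha-m$. Evaluating the identity at $i=V^{-1}(\alpha)$, where the deleted value $V(i)=\alpha$ lies outside $(\alpha-m,\alpha)$, shows that $V$ also satisfies condition~(2): $p(i)=\#\{\beta:\alpha-m<\beta<\alpha,\ V^{-1}(\beta)>i\}$.

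For the induction I would assume $U^{-1}=V^{-1}$ on $\{0,\dots,\alpha-1\}$ (the case $\alpha=0$ is immediate, as both algorithms put $0$ at the leftmost zero of $p$) and set $b=V^{-1}(\alpha)$. By the two preceding paragraphs $b$ satisfies conditions~(1) and~(2), and since the positions occupied by Algorithm~\ref{algo: gorsky_mazin_vazirani} when it comes to place $\alpha$ are exactly $\{U^{-1}(\beta):\beta<\alpha\}=\{V^{-1}(\beta):\beta<\alpha\}$ by the inductive hypothesis, position $b$ is still free. It then remains to show that no position $i$ with $V^{-1}(\alpha-m)<i<b$ is simultaneously free and satisfies condition~(2): if such an $i$ satisfies~(2), then combining this with the identity above and with $S_i[p(i)]=V(i)$ (comparing $\#\{\gamma\in S_i:\gamma<\alpha\}$ with $\#\{\gamma\in S_i:\gamma<V(i)\}$ and using $\alpha\in S_i$, $V(i)\ne\alpha$) forces $V(i)\in(\alpha-m,\alpha)$, hence $V(i)<\alpha$, so by the inductive hypothesis position $i=U^{-1}(V(i))$ is already occupied, not free. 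Positions $i\le V^{-1}(\alpha-m)$ are excluded by condition~(1). Hence $b$ is the leftmost free position satisfying~(1) and~(2), so Algorithm~\ref{algo: gorsky_mazin_vazirani} places $\alpha$ there, i.e.\ $U^{-1}(\alpha)=b=V^{-1}(\alpha)$, completing the induction.

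The hard part will be the combinatorial identity together with the deduction that a position $i<b$ meeting condition~(2) must have $V(i)\in(\alpha-m,\alpha)$: this is where the residue-class structure of $S_i$ and the bookkeeping of which positions are already occupied must be handled carefully, including the minor edge cases when $\alpha<m$, where the window $(\alpha-m,\alpha)$ extends below $0$ and simply contributes no terms. Everything else is routine given the no-gap lemmas for $U$ and $V$ already in hand.
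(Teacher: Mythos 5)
Your proof is correct, and it rests on the same combinatorial core as the paper's: the residue-class correspondence between the elements of the rolling set $S_i$ that are smaller than $\alpha$ and the integers in the window $(\alpha-m,\alpha)$ that have not yet been placed (your ``combinatorial identity'' is exactly the bijection the paper sets up between $R$ and $\{\gamma\in S_i:\gamma<\alpha\}$). The difference is the direction of the induction. The paper inducts on the \emph{position} $i$: assuming $U(j)=V(j)$ for $j<i$, it takes the value $\alpha=U(i)$ that Algorithm~\ref{algo: gorsky_mazin_vazirani} actually places there, checks $\alpha\in S_i$ via $m$-restrictedness, and reads off $\alpha=S_i[p_i]=V(i)$ from the bijection; since Algorithm~\ref{algo: mccammond_thomas_williams}'s step is a closed formula with no search, nothing more is needed. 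You induct on the \emph{value} $\alpha$ and prove $U^{-1}(\alpha)=V^{-1}(\alpha)$, which forces you to replicate Algorithm~\ref{algo: gorsky_mazin_vazirani}'s ``leftmost'' search: you must additionally rule out every free position $i$ with $V^{-1}(\alpha-m)<i<V^{-1}(\alpha)$, which you do correctly by showing that condition (2) at such an $i$ would force $V(i)\in(\alpha-m,\alpha)$ and hence $i$ to be already occupied. Your version is therefore somewhat longer, but it has the mild advantage of explicitly verifying that $V$ satisfies both of Algorithm~\ref{algo: gorsky_mazin_vazirani}'s placement conditions, and your handling of the $\alpha<m$ edge case (negative window integers contributing to neither side) is a detail the paper glosses over.
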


\begin{proof}

     Both of these algorithms impose the restriction that if $U(i) = \alpha -m$ and $U(j) = \alpha$ then $i < j$. If we know $U(i) \mod m$, we can reconstruct $U(i)$ from this. For Algorithm \ref{algo: gorsky_mazin_vazirani}, we start with $U(0)  = p_0$ as you need to place $\{0, 1, \dots, p_0 -1 \}$ to have $p_0$ inversions at $U(0)$. In Algorithm 2, we set $V(0) = \{0, 1, \dots, m-1 \}[p_0] = p_0$. Say $U(j) =V(j) $ for all $j < i$, we consider $U(i)$ and $V(i)$. 

    \vspace{10 pt}

    On the general step of Algorythm \ref{algo: gorsky_mazin_vazirani} if we placed $\alpha$ in the $i$th position, then 
    \begin{enumerate}
        \item $\alpha$ is the smallest element in its congruence class $\mod m$ that was not placed earlier. Furthermore all smaller numbers in $\alpha$'s congruence class $\mod m$ were placed to the left of position $i$. Therefore $\alpha \in S_i$. 
        \item Exactlly $p_i$ numbers from $\{ \alpha -1. \dots, \alpha - m + 1\}$ were placed to the right of position $i$. Let $R$ be the set of these numbers. For every $r \in R$ there is a unique element $r' \in S_i$ such that $m | r - r'$ and $r' \leq r < \alpha $. And vice versa, that is for every element $s' \in S_i$ such that $s' < \alpha $ there is a unique $s \in R$ such that $m | s - s'$ and $s ' \leq s < \alpha$. 
    \end{enumerate}
    Therefore, $\alpha = S_i[p_i] = V(i)$ which means $U(i) = V(i)$.

\end{proof}

Note that for Lemma \ref{Algo same } we only use that fact that $p$ is a parking function to know that there is some $i$ with $p(i) = 0$. Both these algorithms output the same function as long as there is some $i$ where $p(i) = 0$ even if $p$ is not a parking function. But when $p$ is not a parking function, neither of these algorithms will become affine periodic.

\begin{corollary}
    If $\p$ is a parking function, both of these algorithms become affine periodic.
\end{corollary}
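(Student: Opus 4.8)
The plan is to use Lemma~\ref{Algo same } to reduce to a single function, re-read Algorithm~\ref{algo: mccammond_thomas_williams} as iterating the (unbalanced) action of $\p$ on an $m$-invariant set, and then invoke the periodic-point analysis of Section~\ref{sec: Periodic Points} to see that this iteration freezes, up to the shift by $n$, after finitely many periods.

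By Lemma~\ref{Algo same } we have $U=V$, so it suffices to show $V$ is affine periodic. Unwinding Algorithm~\ref{algo: mccammond_thomas_williams}: set $\Delta^{(0)}=\nZ_{\geq 0}$ and $\Delta^{(i+1)}=p(i)\cdot\Delta^{(i)}$, the unbalanced action on $m$-invariant sets. Then each $\Delta^{(i)}$ is a bounded and co-bounded $m$-invariant set, $S_i=\mgen(\Delta^{(i)})$, and $V(i)=\mgen(\Delta^{(i)})[p(i)]$ is the $m$-generator deleted at stage $i$; in particular $\Delta^{(kn)}$ is the length-$kn$ word $\p^{k}$ applied to $\Delta^{(0)}$. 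The crucial elementary remark is that if $\Delta^{(i)}=\Delta^{(i-n)}+n$ for some $i$, then $p(i)=p(i-n)$ and, because the action commutes with integer shifts of $\Delta$, also $V(i)=V(i-n)+n$ and $\Delta^{(i+1)}=\Delta^{(i-n+1)}+n$. Hence it is enough to produce a single $K$ with $\Delta^{((K+1)n)}=\Delta^{(Kn)}+n$; then $V(x+n)=V(x)+n$ for all $x\ge Kn$ by induction, which is affine periodicity of $V$, hence of $U$.

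To produce such a $K$, apply $\phi$. Since $\phi$ commutes with the action and $\phi(\Delta^{(0)})=(0,1,\dots,m-1)-\frac{m-1}{2}\mathbb{1}$ is the centroid $\x_0$ of the fundamental alcove, we get $\phi(\Delta^{(kn)})=\p^{k}\cdot\x_0$. By Corollary~\ref{corollary: x0 converge to fixed point }, $\p^{k}\cdot\x_0\to\x_p$, and by Lemma~\ref{Periodic} the sequence $(\p^{k}\cdot\x_0)_{k}$ is eventually periodic; a convergent eventually periodic sequence is eventually constant, so $\p^{k}\cdot\x_0=\x_p$ for all $k\ge K$, i.e.\ $\phi(\Delta^{(Kn)})=\x_p$. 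By Theorem~1.1 there is a bounded and co-bounded $(m,n)$-invariant set $\Delta_p$ with $\p\cdot\Delta_p=\Delta_p+n$; then $\phi(\Delta_p)$ is a fixed point of $\p$, hence $\phi(\Delta_p)=\x_p$ by coprimality. A bounded and co-bounded $m$-invariant set is the union of the rays $\{a,a+m,a+2m,\dots\}$ over its $m$-generators $a$, so it is determined by its set of $m$-generators, and two such sets with the same $\phi$-image agree up to a common integer translation. Therefore $\Delta^{(Kn)}=\Delta_p+c$ for some $c\in\nZ$, and $\Delta^{((K+1)n)}=\p\cdot(\Delta_p+c)=(\p\cdot\Delta_p)+c=\Delta^{(Kn)}+n$, completing the argument; the resulting affine-periodic $U$ is the desired inverse of $SP$.

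The main obstacle is the passage from ``$\p^{k}\cdot\x_0\to\x_p$'' to ``$\p^{k}\cdot\x_0=\x_p$ for $k$ large'', which is exactly where the classification of Section~\ref{sec: Periodic Points} (Lemma~\ref{Periodic} with Corollary~\ref{corollary: x0 converge to fixed point }, equivalently the fact that the orbit eventually enters $A_p$) does the work. A secondary point is that $\phi$ is injective only up to integer translation, so one must either compare with the genuine fixed set $\Delta_p$ as above, or instead note directly that $\Delta^{((K+1)n)}=\p\cdot\Delta^{(Kn)}\subseteq\Delta^{(Kn)}$ forces the translation $c$ to be nonnegative and then count $|\Delta^{(Kn)}\setminus\Delta^{((K+1)n)}|=n$ (the $n$ distinct elements removed by the $n$ letters of $\p$) to conclude $c=n$.
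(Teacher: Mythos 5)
Your proof is correct and follows essentially the same route as the paper: both identify the sets $S_{kn}$ of Algorithm \ref{algo: mccammond_thomas_williams} with $\p^{k}\cdot\x_0$ via $\phi$, use \cref{lemma: x to fixed alcove} / \cref{corollary: x0 converge to fixed point } together with the discreteness of the orbit to get $\p^{k}\cdot\x_0=\x_p$ for large $k$, and then conclude affine periodicity. The only difference is that you make rigorous the paper's final (implicit) step --- that landing on $\x_p$ forces $\Delta^{((K+1)n)}=\Delta^{(Kn)}+n$ --- via the injectivity of $\phi$ up to integer translation, which is a worthwhile addition rather than a change of method.
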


\begin{proof}
     We show there is some $N$ such that for $i > N$ we have $U(i + n) = U(i) + n$. The set $S_0$ can be thought of as the point $\x = (0, 1, 2\dots , m-1) \in V^m$, then the general step is acting without balancing by the letter $p(i)$. That means $S_{in} $ is a shift of $\p^i\cdot(0, 1, \dots, m-1)$. By \cref{lemma: x to fixed alcove} we know $\p^i\cdot(0, 1, \dots, m-1)$ will converge to the alcove that contains the fixed point of $\p$. Because $(0, 1, 2, \dots, m)$ is a centroid of an alcove, it will converge to the fixed point in a finite number of steps. Once $\p^i\cdot(0, 1\dots, m-1)$ converges to the fixed point, the algorithm will become affine periodic in $n$ steps or less. 
\end{proof}

Note that this resolves conjecture 7.9 from \cite{gorsky_mazin_vazirani_2014}.

\pagebreak

\printbibliography

\end{document}